\documentclass[12pt,twoside]{amsart}
\usepackage[left=3.2cm,right=3.2cm,top=3.2cm,bottom=3.2cm]{geometry}
\usepackage{amssymb}
\usepackage{amsmath}
\usepackage{bbm}
\usepackage[dvips]{graphicx}
\usepackage{hyperref}
\usepackage[capitalize,nameinlink,noabbrev,compress]{cleveref}
\usepackage[dvipsnames]{xcolor}
\usepackage{todonotes}
\usepackage{enumitem}
\usepackage{comment}

\usepackage{tikz}
\usetikzlibrary{arrows}
\usetikzlibrary{decorations.markings}
\usetikzlibrary{shapes.geometric}

\numberwithin{equation}{section}

\newtheorem{thm}{Theorem}[section]
\newtheorem{prop}[thm]{Proposition}

\newtheorem{lem}[thm]{Lemma}

\newtheorem{rem}[thm]{Remark}
\newtheorem{conj}[thm]{Conjecture}

\newmuskip\pFqmuskip
\newcommand*\pFq[6][8]{%
  \begingroup 
  \pFqmuskip=#1mu\relax
  \mathcode`\,=\string"8000
  \begingroup\lccode`\~=`\,
  \lowercase{\endgroup\let~}\pFqcomma
  {}_{#2}F_{#3}{\left[\genfrac..{0pt}{}{#4}{#5};#6\right]}%
  \endgroup
}
\newcommand{\pFqcomma}{\mskip\pFqmuskip}

\newcommand{\ds}{\displaystyle}
\DeclareMathOperator{\sgn}{sgn}
\newcommand{\tsscpp}{totally symmetric self-complementary plane partition}
\newcommand{\pp}{plane partition}

\title[Correlations in TSSCPPs]
{Correlations in totally symmetric self-complementary plane partitions}

\author{Arvind Ayyer}
\address{Arvind Ayyer, Department of Mathematics, 
Indian Institute of Science, Bangalore  560012, India.}
\email{arvind@iisc.ac.in}

\author{Sunil Chhita}
\address{Sunil Chhita, Department of Mathematical Sciences, Durham University, Durham, United Kingdom}
\email{sunil.chhita@durham.ac.uk}

\subjclass[2010]{05C70, 82B20, 60K35, 05A17}
\keywords{totally symmetric self-complementary plane partitions, inverse Kasteleyn matrix, non-bipartite graph}

\date{\today}

\begin{document}

\begin{abstract}
Totally symmetric self-complementary plane partitions (TSSCPPs) are boxed plane partitions with the maximum possible symmetry.  
We use the well-known representation of TSSCPPs as a dimer model on a honeycomb graph enclosed in one-twelfth of a hexagon with free boundary
to express them as perfect matchings of a family of non-bipartite planar graphs.
Our main result is that the edges of the TSSCPPs form a Pfaffian point process, for which we give explicit formulas for the inverse Kasteleyn matrix. 
Preliminary analysis of these correlations are then used to give a precise conjecture for the limit shape of TSSCPPs in the scaling limit.

\end{abstract}

\maketitle

\section{Introduction}
\label{sec:intro}

Totally symmetric self-complementary plane partitions (TSSCPPs) of order $n$ are the subset of plane partitions in a $(2n) \times (2n) \times (2n)$ box with the maximum possible symmetry. They have been intensely studied since the initial analysis by Mills, Robbins and Rumsey conjecturing that the number of TSSCPPs of order $n$ are the same as the number of alternating sign matrices (ASMs) of size $n$~\cite{MRR86} (see the formula for $A_n$ in \eqref{asm}). The fact that the number of TSSCPPs of order $n$ is given by $A_n$ was established by Andrews in a 
difficult paper~\cite{Andrews-1994}. 
It was this paper by Mills, Robbins and Rumsey that led Stanley to initiate the study of symmetry classes of plane partitions~\cite{stanley-1986}. 
The fact that ASMs are also enumerated by the same sequence of numbers is known as the Alternating Sign Matrix theorem and was proven first by Zeilberger~\cite{Zei96a} by directly comparing them with TSSCPPs and later by Kuperberg~\cite{Kup96} using a connection with the six-vertex model in statistical mechanics.
See the book by Bressoud~\cite{Bressoud-99} for more about the history of this and related fascinating problems.

Boundary correlations in ASMs have been studied almost from the very beginning. A formula for the enumeration of ASMs according to the position of the unique $1$ in the first row is known as the Refined Alternating Sign Matrix theorem and this was proven by Zeilberger~\cite{Zei96b}. Stroganov~\cite{stroganov-2006} gave a formula for the number of ASMs according to the position of the unique $1$ in the first and last rows (the \emph{top-bottom} formula), and according to the position of the unique $1$ in the first row and left column (the \emph{top-left} formula). The latter formula was subsequently improved in \cite{behrend-dif-zinn-2013}.
Other refined enumeration formulas include the \emph{top two} formula~\cite{fischer-romik-2009,karklinsky-romik-2010},
the \emph{top two and bottom} formula~\cite{fischer-2012}, 
the \emph{top-left-bottom} formula~\cite{fischer-2012,ayyer-romik-2013}
and the \emph{top-left-bottom-right} formula~\cite{ayyer-romik-2013,behrend-2013}. The problem of computing bulk correlations seems like a difficult and interesting open problem.
On the TSSCPP side, no formulas are known for any correlation functions.

On the other hand, correlations for some plane partitions have been established in recent years~\cite{OR01,Pet14,BBNV18}.
The typical perspective here is to view the plane partition as a rhombus or lozenge tiling.  Randomness is introduced by picking each configuration at random from the set of all possible configurations in some prescribed manner, the simplest being picking each configuration uniformly at random which is the case considered here for TSSCPPs.  For a specific class of tiling models, interesting probabilistic features are observed when the system size becomes large, such as a macroscopic limit shape, which is a type of law of large numbers result. Around this limit shape, there are still microscopic fluctuations which are believed to be governed by universal probability distributions originating in both statistical mechanics and random matrix theory. This assertion has been proved  primarily for domino and lozenge tiling models; see~\cite{Gor20} and references therein for details.

To study these fluctuations, one of the more successful approaches has been to study the correlations of an associated particle system to the random tiling model using methods originating from random matrix theory.  For many types of tiling models, these correlations  are governed by the determinant of some matrix, called the correlation kernel.   Probability measures of this form are known as determinantal point processes; see for example~\cite{Sos06}.  Finding the correlation kernel can be computationally tricky, but there are now some relatively standard approaches such as using the Eynard-Mehta Theorem~\cite{Jo03b,RB04} which has been particularly useful for those in the Schur process class~\cite{Jo03, BF08, Dui11, Pet14, DM15} as well as those that are not Schur processes~\cite{DK:17, BD19, CDKL:19}. Put bluntly, this theorem gives the correlation kernel when the model is expressed in terms of nonintersecting lattice paths with fixed endpoints.  There are other approaches for computing correlation kernels, such as vertex operators~\cite{OR01, BCC:14,  BBCCR:15} and also the Harish-Chandra/Itzykson-Zuber integral~\cite{Nov15} 

The Eynard-Mehta theorem has a Pfaffian analog where the final positions of the nonintersecting lattice paths are free. In this case, the correlations of the associated particle system to the tiling model are given by a Pfaffian point process; see~\cite{BBNV18} for an example where the authors give a formula for the correlation kernel for both symmetric plane partitions and plane overpartitions. The TSSCPP is  another example of this and so the Eynard-Mehta theorem immediately shows that the particle system defined through the nonintersecting lattice paths for TSSCPP is a Pfaffian point process with some correlation kernel\footnote{As far as we are aware, this observation is due to Dan Romik, who gave a talk at the Clay Mathematics Institute, Oxford in May 2015 pointing out the difficulty in finding the specific form of the correlation kernel.}. Unfortunately, the formula for this correlation kernel is not known, due to computational difficulties in inverting an arbitrary sized matrix that is found in the Eynard-Mehta theorem.  In this paper, we use dimer model techniques to settle this problem and find a formula for the inverse Kasteleyn matrix for TSSCPPs, where the inverse Kasteleyn matrix can be heuristically thought of as the dimer model equivalent to the correlation kernel of a particle system.   Since lozenge tilings and its associated particle system are in bijection, this implies a formula for the correlation kernel of the associated particle system. 

\begin{center}
\begin{figure}[htbp!]
\includegraphics[scale=1]{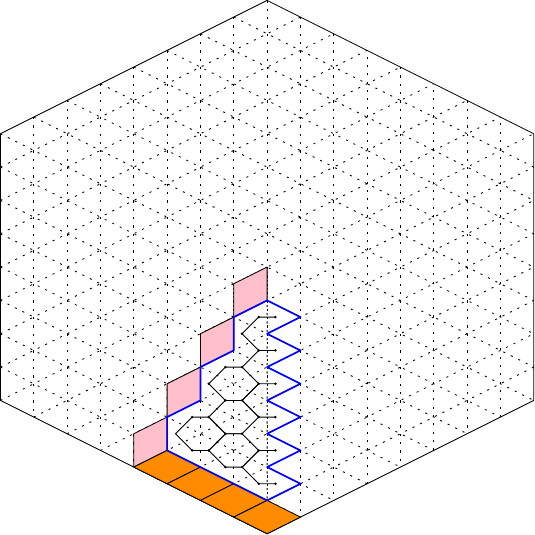}
\caption{The region of the regular hexagon bounded by the blue lines whose tiling is sufficient to determine a \tsscpp{}. The pink and orange lozenges are forced.}
\label{fig:tsscpp-region}
\end{figure}
\end{center}

The rest of the paper is organized as follows. In \cref{sec:graphs}, we convert plane partitions to perfect matchings of a class of non-bipartite graphs bijectively to be able to explain our main results. We then summarize the main results of this article in \cref{sec:summary}, giving a formula for the inverse Kasteleyn matrix and stating a sum rule. \cref{sec:vertexb,sec:bothtop} are devoted to computations of this formula at special locations. \cref{sec:main} completes the proof of the formula for the inverse Kastelen matrix.
The proof of the main result use combinatorial identities whose proof is deferred to \cref{sec:identities}. We present boundary recurrences for the inverse Kasteleyn matrix of independent interest, which we use to prove the sum rule, in \cref{sec:additionrec}. Finally, we end with heuristics for the limit shape and a precise conjecture in \cref{sec:conjlimitshape}.

\section{From TSSCPPs to hexagonal graphs}
\label{sec:graphs}

As mentioned earlier, a \pp{} inside an $a \times b \times c$ box can be equivalently viewed as a lozenge (or rhombus) tiling of a hexagonal region of side lengths $a,b,c,a,b,c$ of a triangular lattice~\cite{kuperberg-1994}. 
A \tsscpp{} of order $n$ is then a rhombus tiling of a regular hexagon with side length $2n$ with the maximum possible symmetry. In this case, all the information about the tiling is contained in $(1/12)$'th of the hexagon~\cite[Section 8]{MRR86}.

This is illustrated in \cref{fig:tsscpp-region}, where the region enclosed by the blue lines is to be tiled with lozenges in a maximal way.
This means that among the $2(n-1)$ pendant edges, only $n-1$ will be matched. This is known as a {\em free boundary condition}.
Equivalently, we have to find maximum matchings of the dual graph drawn in black. 
(Recall that a {\em maximum matching} of a graph is one which has the largest number of matched edges.)

Let $T_{n-1}$ denote this dual graph for TSSCPPs of size $n$. \cref{fig:dualgraphs} shows the dual graphs for TSSCPPs of sizes $3$ and $4$.
Define, for $n \geq 1$,
\begin{equation}
\label{asm}
A_n = \prod_{i=0}^{n-1} \frac{(3i+1)!}{(n+i)!}.
\end{equation}

\begin{thm}[Conjectured in \cite{MRR86}, proved in \cite{Andrews-1994}]
The number of maximum matchings of $T_n$ is given by $A_{n+1}$.
\end{thm}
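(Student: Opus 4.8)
The plan is to prove the statement in two stages: first establish a bijection between maximum matchings of $T_n$ and \tsscpp{}s of order $n+1$, and then invoke the known enumeration of the latter.

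First I would make the lozenge-tiling picture precise. By the classical correspondence~\cite{kuperberg-1994}, a \pp{} in an $a\times b\times c$ box is the same thing as a rhombus tiling of the associated hexagon; for a \tsscpp{} of order $n+1$ the relevant hexagon has all sides equal to $2n+2$. The total symmetry of the partition, together with self-complementarity, forms a symmetry group of order $12$, and as explained in~\cite[Section 8]{MRR86} this forces the entire tiling to be determined by its restriction to one-twelfth of the hexagon. I would spell out this reduction explicitly: identify the fundamental domain of \cref{fig:tsscpp-region}, check that the symmetry constraints force the pink and orange boundary lozenges, and verify that the remaining freedom is exactly a tiling of the enclosed region in which, among the $2n$ pendant edges, the maximal number $n$ are matched --- this is the free boundary condition. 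I would track the indexing carefully here, since $T_{n-1}$ is assigned to order $n$, so that $T_n$ corresponds to order $n+1$.

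Second, I would pass to the dual graph. Replacing each lozenge by the edge of $T_n$ it crosses turns a tiling into a matching, and a maximal tiling of the fundamental region into a maximum matching. The points to verify are that this map is a bijection onto maximum matchings of $T_n$, and that maximality of the matching corresponds exactly to matching the largest possible number of pendant edges, i.e.\ the free boundary condition. This produces a bijection between maximum matchings of $T_n$ and \tsscpp{}s of order $n+1$. Finally, the count follows by citing the theorem quoted above: Andrews' formula gives $A_{n+1}$ for the number of \tsscpp{}s of order $n+1$, and composing with the bijection yields the claimed value for the maximum matchings of $T_n$.

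The genuinely hard input is confined to the last step: Andrews' product formula is a deep result that I would cite rather than reprove. Within this argument the real work is the symmetry reduction to the fundamental domain and the verification that the induced boundary conditions are precisely the free-boundary / maximum-matching setup; once the region and its forced lozenges are correctly identified, the dual-graph translation itself is routine.
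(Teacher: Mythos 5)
Your proposal is correct and follows essentially the same route as the paper, which treats this theorem as a citation: the reduction of a \tsscpp{} of order $n+1$ to a maximal tiling of the one-twelfth fundamental domain with free boundary (equivalently, a maximum matching of the dual graph $T_n$) is exactly the content of the paper's Section~2 discussion of \cref{fig:tsscpp-region}, and the count $A_{n+1}$ is then imported from Andrews' theorem. Your write-up merely makes explicit the symmetry-reduction and forced-lozenge checks that the paper delegates to \cite[Section 8]{MRR86}.
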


\begin{figure}[htbp!]
\begin{center}
	\includegraphics[scale=1]{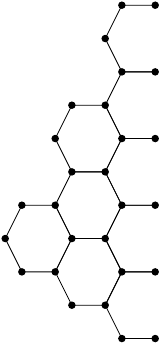}
	\hspace{2cm}
	\includegraphics[scale=0.8]{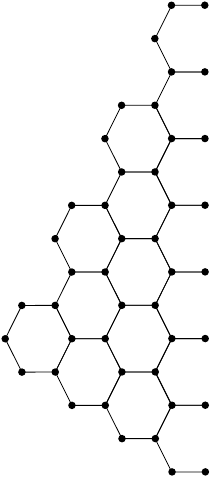}
	\caption{The left figure shows the graph $T_3$ while the right figure shows $T_4$.} 
	\label{fig:dualgraphs}
\end{center}
\end{figure}

Recall that $T_n$ has $2n$ pendant vertices on the right. We now define a related family of graphs $G_{n}$ starting from $T_n$ as follows.
We add $2n+1$ (resp. $2n+2$) vertices if $n$ is even (resp. odd)
in a column on the right of the pendant vertices and connect them in a triangular fashion as illustrated in \cref{fig:hexagons}. Notice that if $n$ is odd, the topmost vertex is a leaf, i.e. a pendant vertex.

\begin{figure}[htbp!]
\begin{center}
	\includegraphics[scale=1]{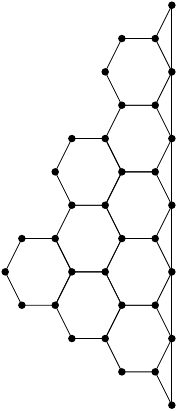}
	\hspace{2cm}
	\includegraphics[scale=0.8]{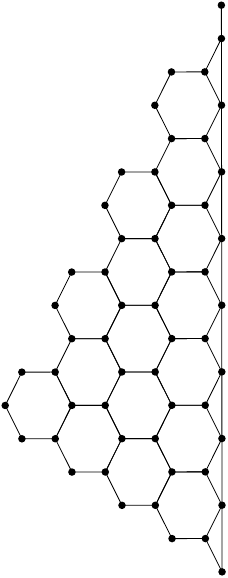}
	\caption{The left figure shows the graph $G_3$ while the right figure shows $G_4$.} 
	\label{fig:hexagons}
\end{center}
\end{figure}

\begin{prop}
The number of perfect matchings of $G_n$ is $A_{n+1}$.
\end{prop}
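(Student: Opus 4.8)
The plan is to deduce the count directly from the theorem of Andrews stated above, which gives that the number of maximum matchings of $T_n$ equals $A_{n+1}$. Concretely, I would exhibit a bijection between the perfect matchings of $G_n$ and the maximum matchings of $T_n$. Since the triangular column is attached only along the $2n$ pendant vertices, its role is precisely to \emph{absorb} the free boundary of $T_n$, converting the free-boundary maximum-matching count into an ordinary perfect-matching count; once the bijection is in place, the proposition is immediate.

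I would set up the bijection by restriction and extension. Given a perfect matching $M$ of $G_n$, deleting every edge of $M$ that meets the added column leaves a matching $M'$ of $T_n$, and the vertices of $T_n$ left uncovered by $M'$ are exactly the pendant vertices that $M$ sends into the column. The first claim to establish is that $M'$ is a \emph{maximum} matching of $T_n$: because the column meets only pendant vertices, every non-pendant vertex of $T_n$ must already be covered by $M'$, and a count of the column (including the even/odd distinction in the number of added vertices and the leaf present when $n$ is odd) pins down the number of absorbed pendants, forcing $M'$ to saturate everything that a maximum matching can. Conversely, starting from a maximum matching of $T_n$, I would show that the induced pattern of free pendant vertices extends to a perfect matching of $G_n$ in exactly one way, so that restriction and extension are mutually inverse.

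The crux is the analysis of the triangular column: for each admissible pattern of uncovered pendant vertices there should be one and only one perfect matching of the column together with those pendant vertices. Here I would argue by peeling. When $n$ is odd the topmost added vertex is a leaf, so its unique incident edge is forced into every perfect matching; removing that edge exposes a new leaf and reduces the column to the next smaller configuration, and iterating propagates a forced matching down the strip whose only freedom is exactly \emph{where} it turns to absorb a pendant vertex. When $n$ is even the parity of the column triggers the same propagation in place of the leaf. This forced-matching argument should simultaneously yield existence, uniqueness, and the fact that a completion exists precisely when the $T_n$-side matching is maximum.

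The main obstacle I anticipate is controlling which boundary patterns actually occur and checking that the peeling never stalls or branches, i.e.\ verifying that the triangular connectivity is such that at every stage a single edge is forced, so that the assignment from a pattern of free pendants to a perfect matching of the column is well defined and injective. This is a purely local combinatorial check on the triangular strip, but it is exactly where the precise shape of the gadget has to be used, and I expect it to be the most delicate part of the argument. Once it is settled, combining the bijection with the theorem of Andrews stated above gives that the number of perfect matchings of $G_n$ is $A_{n+1}$.
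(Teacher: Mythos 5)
Your proposal is correct and takes essentially the same approach as the paper: both arguments restrict a perfect matching of $G_n$ to a maximum matching of $T_n$ and show conversely that each maximum matching of $T_n$ extends in exactly one way to a perfect matching of $G_n$ via a forced completion of the added column, then invoke Andrews' theorem. The only cosmetic difference is the direction of the forcing argument --- the paper completes greedily from the leftmost column vertex, while you peel from the top (leaf) end --- but the underlying local check that the completion never stalls or branches is the same.
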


\begin{proof}
We will construct a bijection between maximum matchings of $T_n$ and perfect matchings of $G_{n}$. Starting with a perfect matching of $G_{n}$ and removing the vertices in the rightmost column, we obtain a maximum matching of $T_n$. To go the other way, we will show that there is a unique way to complete a maximum matching of $T_n$.

Consider a maximum matching as shown in \cref{fig:maxmatch}. We will now match the remaining vertices in $G_{n}$. We start from the leftmost vertex $v_1$ and match it to the leftmost available vertex. We then find the leftmost unmatched vertex and match it to the second unmatched vertex on the left. We continue this way until all vertices are matched.

\begin{center}
\begin{figure}[htbp!]
\includegraphics[scale=0.5]{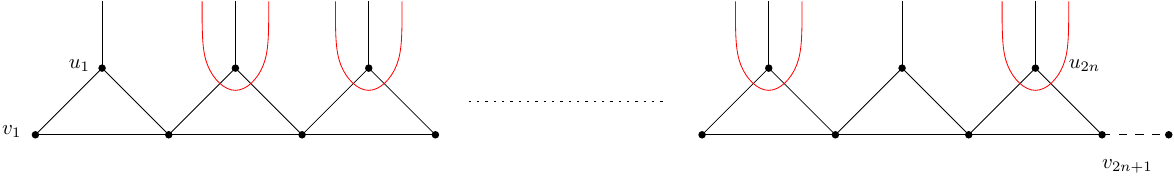}
\caption{An illustration of the pendant vertices of the graph $T_n$ embedded in $G_{n}$. The vertices in the top row are labelled $u_1,\dots$ and those in the bottom row are labelled $v_1,\dots$.
The maximum matching of $T_n$ is shown in red. The rightmost vertex exists only if $n$ is odd.}
\label{fig:maxmatch}
\end{figure}
\end{center}

To see that we do not run into a contradiction, consider the leftmost available vertex at any stage. Suppose it is $v_i$. If $u_i$ is matched, we match $v_i$ to $v_{i+1}$, and if not, we match it to $u_i$. Suppose it is $u_i$. Then we match it to $v_{i+1}$. This is always possible, since  $v_{i+1}$ cannot be matched to $v_i$ as $u_i$ is unmatched. The rightmost vertex has to be matched since there are an even number of unmatched vertices initially.
This completes the proof.
\end{proof}

\section{Summary of results}
\label{sec:summary}

To state our main result, we will need to introduce some notation.
We write $[ x]_2$ to denote $x \mod 2$. For $a,m \in \mathbb{Z}$ with $m>0$, we denote by $\Gamma_{a,a+1,\dots,a+m}$ a positively oriented contour containing the integers $a$ through to $a+m$ and no other integers.  
In particular, $\Gamma_0$ is a positively oriented circle around the origin with radius less than 1.
We denote $\mathrm{i}=\sqrt{-1}$. We denote $Z_{G}$ to be the number of dimer configurations of the graph $G$. For a subset of vertices $U$, we let $Z_{G\backslash U}$ denote the number of dimer configurations on the subgraph of $G$ induced by removing $U$. When $G_n$ is the TSSCPP graph of size $n$ defined above, we write $Z_n \equiv Z_{G_n}$. 
We will also use the notation $Z_n^{\{v_1,\dots,v_m\}}\equiv Z_{G_n\backslash \{v_1, \dots, v_m\}}$, that is the partition function of the dimer model on the {induced graph of $G_n$ on the vertex set $\mathtt{V}_n \backslash \{v_1,\dots,v_m \}$.} Finally, we will use the convention that $\binom{m}{-1}=0$ for all $m \in \mathbb{N} \cup\{0\}$ throughout the paper.

For the graph $G_n = (\mathtt{V}_n, \mathtt{E}_n)$, the coordinates of the vertices are given by 
\begin{equation}	
	\mathtt{V}_n = \{ (x_1,x_2) \mid 0 \leq x_1\leq 2n, x_1 \leq x_2 \leq 2n+1\} \backslash \{ (2n,2n+1) \mathbbm{1}_{n \in 2\mathbb{Z}+1} \} 
\end{equation}
and  the edges are given by 
\begin{equation}
	\begin{split}
		\mathtt{E}_n& =\{ (( x_1,x_2),(x_1+1,x_2)\mid 0\leq x_1 \leq 2n-1,x_1 \leq x_2 \leq 2n+1,[x_1+x_2]_2=1\} \\
 & \cup \{ (( x_1,x_2),(x_1,x_2+1)\mid 0\leq x_1 \leq 2n-\mathbbm{1}_{n \in 2\mathbb{Z}+1},x_1 \leq x_2 \leq 2n\} \\
		&\cup \{ ((x_1,x_1),(x_1+1,x_1+1))\mid 0\leq x_1 \leq 2n-1 \} ;
	\end{split}
\end{equation}
see \cref{fig:rectanglecoords}.

\begin{figure}[htbp!]
\begin{center}
	\includegraphics[height=4.5cm]{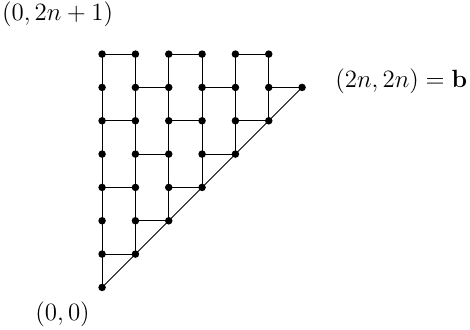}
	\includegraphics[height=4.5cm]{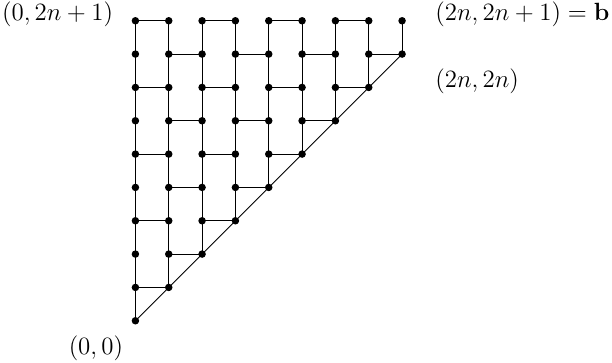}
	\caption{The left figure shows $G_3$, while the right figures shows  $G_4$. The coordinates are for the top left, rightmost and bottom left vertices. The Kasteleyn orientation is chosen so that vertices with odd parity are sinks, those with even parity not on the diagonal are sources, and the edges along the diagonal point towards the origin. } 
	\label{fig:rectanglecoords}
\end{center}
\end{figure}

We let $\mathbf{b}$ denote the vertex $(2n,2n+1-[n]_2)$.
The entries of the skew-symmetric Kasteleyn matrix, $K_n=K_n(x,y)_{x,y \in \mathtt{V}_n}$ are given by
\begin{equation}\label{KasteleynMatrix}
	K_n((x_1,x_2),(y_1,y_2)) =k_n((x_1,x_2),(y_1,y_2))-k_n((y_1,y_2),(x_1,x_2))
\end{equation}
where
\begin{equation}	
	k_n((x_1,x_2),(y_1,y_2)) = \left\{ \begin{array}{ll} 
		1 & \mbox{if } [x_1+x_2]_2 =0 , x_2=y_2, x_1-y_1=1 \\
		1 & \mbox{if }  [x_1+x_2]_2=0,|y_2-x_2|=1, x_1=y_1  \\
		1 & \mbox{if } x_1=x_2, y_1 = y_2 = x_1-1 \\
	0 & \mbox{otherwise.}  \end{array}  \right.  
	\end{equation}
Here the Kasteleyn orientation is chosen so that the number of counter-clockwise arrows around each face is an odd number. From~\cite{Kas61,Kas63}, $|\mathrm{Pf} K_n|$ is equal to $Z_n$.   
	
Define 
	\begin{equation}\label{eq:pnkl}
	p(n,k,\ell) = \frac{ (n+k-2\ell+1)!(2n-k-\ell+1)!}{(k-\ell)! (3n-k+2-2\ell)!} (-1)^k (3n-3k+2) C_{n-k},
\end{equation}
where $C_n = \frac{1}{n+1} \binom{2n}{n}$ is the $n$'th Catalan number. 
We now introduce notation helpful for stating the main theorem.
Recall that $\Gamma_0$ is a positively oriented circle around the origin with radius less than 1.
Introduce the following formulas for $0 \leq i \leq 2n-1$:
	\begin{equation}\label{hnb0}
		h_n^{0,\mathbf{b}}(i) =-[i+1]_2+ \sum_{k=0}^n p(n,k,0) \frac{1}{2 \pi \mathrm{i}} \int_{\Gamma_0} \text{d}r \frac{(1+r)^{n-k}}{(1-r)r^{i-2k}}, 
	\end{equation}
and
\begin{equation}\label{hnb1}
	h_n^{1,\mathbf{b}}(i) = \sum_{k=0}^n p(n,k,0) \frac{1}{2 \pi \mathrm{i}} \int_{\Gamma_0} \text{d}r \frac{(1+r)^{n-k}}{r^{i-2k+1}}.
	\end{equation}
We will first state the formula for $K_n^{-1}$ in the special case when the second vertex is $\mathbf{b}$. This will prove useful for the general result.

\begin{thm}
\label{thm:vertexb}
If $(x_1,x_2)=(i_1,i_1+2i_2+1)$ with $ 0\leq i_2 \leq n - \lceil i_1/2 \rceil$ then 
	\begin{equation} \label{Knb1}
		K_n^{-1}((x_1,x_2), \mathbf{b}) = \sum_{l=0}^{i_1} (-1)^{i_2+l} \binom{i_2-1+l}{l} h_n^{1,\mathbf{b}}(i_1-l),
	\end{equation}
 and if $(x_1,x_2)=(i_1,i_1+2i_2)$ with $ 0\leq i_2 \leq n - \lfloor i_1/2 \rfloor$ then 
\begin{equation}\label{Knb0}
K_n^{-1}((x_1,x_2), \mathbf{b}) = \sum_{l=0}^{i_2} (-1)^{i_2} \binom{ i_2}{ l}  h_n^{0,\mathbf{b}}(i_1+l).
	\end{equation}
	Finally, when $n$ is even, $K^{-1}_n((2n,2n),\mathbf{b})=-1$.
\end{thm}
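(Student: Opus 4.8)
The plan is to use the defining characterization of the $\mathbf{b}$-th column of the inverse: since $\mathrm{Pf}\,K_n=\pm Z_n\neq 0$, the skew-symmetric matrix $K_n$ is invertible, and the column $\big(K_n^{-1}(v,\mathbf{b})\big)_{v\in\mathtt{V}_n}$ is the unique solution of
\begin{equation*}
\sum_{v\in\mathtt{V}_n}K_n(w,v)\,K_n^{-1}(v,\mathbf{b})=\delta_{w,\mathbf{b}},\qquad w\in\mathtt{V}_n.
\end{equation*}
Writing $f(v):=K_n^{-1}(v,\mathbf{b})$ and using that $K_n(w,\cdot)$ is supported on the graph-neighbours of $w$ with signs $\pm1$ read off from \eqref{KasteleynMatrix}, each vertex $w$ yields one local difference equation for $f$. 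It therefore suffices to check that the right-hand sides of \eqref{Knb1} and \eqref{Knb0}, together with the corner value, satisfy every such equation; uniqueness then closes the argument, so the whole theorem becomes a verification.

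First I would dispose of the anchoring corner value. When $n$ is even, $\mathbf{b}=(2n,2n+1)$, and inspecting \eqref{KasteleynMatrix} shows that the only neighbour of $\mathbf{b}$ in $G_n$ is $(2n,2n)$, with $K_n(\mathbf{b},(2n,2n))=-1$ (the sink $\mathbf{b}$ has odd parity, so only $k_n((2n,2n),\mathbf{b})=1$ contributes). Hence the equation at $w=\mathbf{b}$ reads $-f((2n,2n))=1$, giving $K_n^{-1}((2n,2n),\mathbf{b})=-1$ immediately; this value lies outside the range $0\le i\le 2n-1$ of $h_n^{0,\mathbf{b}}$, which is exactly why it is recorded separately, and it supplies the normalization that, together with the homogeneous relations below, pins $f$ down.

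Next I would split the two-dimensional problem into a base part and a transverse part. Setting $i_2=0$ in \eqref{Knb0} and \eqref{Knb1}, where the binomial coefficients collapse to a single surviving term, identifies $h_n^{0,\mathbf{b}}(i_1)$ and $h_n^{1,\mathbf{b}}(i_1)$ as the values of $f$ on the diagonal line $x_2=x_1$ and on the adjacent line $x_2=x_1+1$ respectively. For the transverse direction I would feed the binomial ansatz into the difference equations coming from the even-parity source vertices and the odd-parity sink vertices lying off the diagonal; I expect that, via Pascal's rule, the weights $\binom{i_2-1+l}{l}$ and $\binom{i_2}{l}$ act as the discrete propagators that carry the base data off the diagonal into the bulk, making these relations collapse to identities among the $h$-values. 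Once this is established, all of \eqref{Knb1} and \eqref{Knb0} follows as soon as the two base sequences are known to solve the relations at the diagonal and near-diagonal vertices.

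Finally I would verify the base sequences themselves, and here the contour-integral representations \eqref{hnb0} and \eqref{hnb1} are the right tool: substituting them into the diagonal relations and into the source equation at $\mathbf{b}$, one extracts coefficients in $r$ and reduces each relation to an identity in the explicit weights $p(n,k,0)$ of \eqref{eq:pnkl}. I expect the main obstacle to be precisely these reductions—showing that the resulting hypergeometric-type sums telescope correctly, and that the source equation yields exactly $1$ rather than $0$—which is where the combinatorial identities deferred to \cref{sec:identities} must enter, and where the formula has to be reconciled with the modified relations at the boundary vertices $x_1=0$ and $x_2=2n+1$ (fewer neighbours) and with the special computations of \cref{sec:bothtop}. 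The transverse binomial propagation, by contrast, should be routine.
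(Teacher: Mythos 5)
Your proposal takes essentially the same route as the paper's own proof: the paper also establishes the result by verifying the linear system $K_n\cdot K_n^{-1}(\cdot,\mathbf{b})=\mathbbm{I}_{x=\mathbf{b}}$ entrywise and invoking uniqueness, with Pascal's rule collapsing the binomial sums at bulk vertices exactly as you predict, contour-integral reductions on the base lines, and the identities of \cref{sec:identities} (\cref{thm:sumf}, \cref{thm:sumg}) entering precisely at the diagonal, $x_2=2n+1$, and $\mathbf{b}$ equations where you anticipated them. Your only deviation is cosmetic: you read off $K_n^{-1}((2n,2n),\mathbf{b})=-1$ directly from the single equation at $w=\mathbf{b}$ (whose unique neighbour is $(2n,2n)$), while the paper derives the same value probabilistically from the forced edge via \cref{thm:localstats} in \cref{prop:Kbeven} --- the identical observation in different clothing.
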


Introduce the following formulas for $0 \leq i,j \leq 2n-1$:
\begin{equation}\label{tn00}
	\begin{split}
		&t_n^{0,0} (i,j)= \mathbbm{1}_{[i<j]}[i+1]_2[j]_2 - \mathbbm{1}_{[i>j]}[i]_2[j+1]_2 +[j+1]_2h_n^{0,\mathbf{b}}(i)-[i+1]_2 h_n^{0,\mathbf{b}}(j) \\
	 & +	\sum_{k_1=0}^n \sum_{\ell_1=0}^{k_1} \sum_{k_2=0}^n \sum_{\ell_2=0}^{k_2} p(n,k_1,\ell_1)p(n,k_2,\ell_2) \frac{1}{(2\pi \mathrm{i})^4} \int_{\Gamma_0}\text{d}r_1 \int_{\Gamma_0} \text{d}r_2 \int_{\Gamma_0}\text{d}s_1 \int_{\Gamma_0}\text{d}s_2 \\
&  \times \frac{ (1+r_1)^{n-k_1} (1+r_2)^{n-k_2}}{(1-r_1)r_1^{i-2k_1}(1-r_2)r_2^{j-2k_2}}
		\frac{s_1-s_2}{(s_1 s_2-1)s_1^{\ell_1+1}s_2^{\ell_2+1}}, 
\end{split}
\end{equation}
\begin{equation}\label{tn10}
\begin{split}
	&t_n^{1,0} (i,j)= [j+1]_2h_n^{1,\mathbf{b}}(i)\\
& +	\sum_{k_1=0}^n \sum_{\ell_1=0}^{k_1} \sum_{k_2=0}^n \sum_{\ell_2=0}^{k_2} p(n,k_1,\ell_1)p(n,k_2,\ell_2) \frac{1}{(2\pi \mathrm{i})^4} \int_{\Gamma_0}\text{d}r_1 \int_{\Gamma_0} \text{d}r_2 \int_{\Gamma_0}\text{d}s_1 \int_{\Gamma_0}\text{d}s_2\\
	& \times \frac{ (1+r_1)^{n-k_1} (1+r_2)^{n-k_2}}{r_1^{i-2k_1+1}(1-r_2)r_2^{j-2k_2}}	\frac{s_1-s_2}{(s_1 s_2-1)s_1^{\ell_1+1}s_2^{\ell_2+1}},
\end{split}
\end{equation}
\begin{equation}\label{tn01}
\begin{split}
	&t_n^{0,1} (i,j)= -[i+1]_2 h_n^{1,\mathbf{b}}(j)\\
	& +	\sum_{k_1=0}^n \sum_{\ell_1=0}^{k_1} \sum_{k_2=0}^n \sum_{\ell_2=0}^{k_2} p(n,k_1,\ell_1)p(n,k_2,\ell_2) \frac{1}{(2\pi \mathrm{i})^4} \int_{\Gamma_0}\text{d}r_1 \int_{\Gamma_0} \text{d}r_2 \int_{\Gamma_0}\text{d}s_1 \int_{\Gamma_0}\text{d}s_2\\
& \times  \frac{ (1+r_1)^{n-k_1} (1+r_2)^{n-k_2}}{(1-r_1)r_1^{i-2k_1}r_2^{j-2k_2+1}} 	\frac{s_1-s_2}{(s_1 s_2-1)s_1^{\ell_1+1}s_2^{\ell_2+1}},
\end{split}
\end{equation}
and
\begin{equation}\label{tn11}
\begin{split}
	&t_n^{1,1} (i,j)=
	\sum_{k_1=0}^n \sum_{\ell_1=0}^{k_1} \sum_{k_2=0}^n \sum_{\ell_2=0}^{k_2} p(n,k_1,\ell_1)p(n,k_2,\ell_2) \frac{1}{(2\pi \mathrm{i})^4} \int_{\Gamma_0}\text{d}r_1 \int_{\Gamma_0} \text{d}r_2 \int_{\Gamma_0}\text{d}s_1 \int_{\Gamma_0}\text{d}s_2\\
& \times \frac{ (1+r_1)^{n-k_1} (1+r_2)^{n-k_2}}{r_1^{i-2k_1+1}r_2^{j-2k_2+1}}  	\frac{s_1-s_2}{(s_1 s_2-1)s_1^{\ell_1+1}s_2^{\ell_2+1}}.
\end{split}
\end{equation}

We are now ready to state the formula for $K_n^{-1}$. Below, we make repeated use of $K^{-1}_n(x,y)=-K^{-1}_n(y,x)$ for $x,y \in \mathtt{V}_n$ since $K_n$ is antisymmetric.

\begin{thm}
\label{thm:main}
Suppose that $(x_1,x_2)=(i_1,i_1+2i_2+ \epsilon_i)$, $(y_1,y_2)=(j_1,j_1+2j_2+\epsilon_j)$ with $0\leq i_1 ,j_1 \leq  2n-1$
and $\epsilon_i, \epsilon_j \in \{0,1\}$, where $0 \leq i_2 \leq n- \lfloor (i_1 + \epsilon_i)/2 \rfloor$, and $0 \leq j_2 \leq n- \lfloor (j_1 + \epsilon_j)/2 \rfloor $. 
	
\begin{itemize}
	
\item If $\epsilon_i = \epsilon_j= 1$, then
	\begin{equation}\label{Kinv11}
		\begin{split}
			K_n^{-1}((x_1,x_2),(y_1,y_2)) &= \sum_{\ell_1=0}^{i_1} \sum_{\ell_2=0}^{j_1}  (-1)^{i_2+j_2} (-1)^{\ell_1+\ell_2} \\ & \times \binom{i_2-1+\ell_1}{\ell_1}\binom{j_2-1+\ell_2}{\ell_2}
 t_n^{1,1}(i_1-\ell_1,j_1-\ell_2).
		\end{split}
	\end{equation}

\item If $\epsilon_i = \epsilon_j= 0$, then
	\begin{equation}\label{Kinv00}
		\begin{split}
			K_n^{-1}((x_1,x_2),(y_1,y_2)) &= \sum_{\ell_1=0}^{i_2} \sum_{\ell_2=0}^{j_2}  (-1)^{i_2+j_2}  \binom{i_2}{\ell_1}\binom{ j_2}{\ell_2} t_n^{0,0}(i_1+\ell_1,j_1+\ell_2).
		\end{split}
	\end{equation}

\item If $\epsilon_i = 1$ and $\epsilon_j= 0$, then
	\begin{equation}\label{Kinv10}
		\begin{split}
			&K_n^{-1}((x_1,x_2),(y_1,y_2)) = \sum_{\ell_1=0}^{i_1} \sum_{\ell_2=0}^{j_2}  (-1)^{i_2+j_2} (-1)^{\ell_1} \binom{ i_2-1+\ell_1}{ \ell_1}\binom{j_2}{\ell_2}\\ & \times  t_n^{1,0}(i_1-\ell_1,j_1+\ell_2) - (-1)^{i_2+j_2} \mathbbm{1}_{x_1 \geq y_1} \mathbbm{1}_{x_1+x_2<y_1+y_2}  \binom{j_2-i_2-1}{ i_1-j_1}.
		\end{split}
	\end{equation}

\end{itemize}
\end{thm}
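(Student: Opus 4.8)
The plan is to verify that the right-hand sides of \eqref{Kinv11}, \eqref{Kinv00} and \eqref{Kinv10} satisfy the defining linear system of the inverse Kasteleyn matrix. Since $\mathrm{Pf}\,K_n\neq 0$ the matrix $K_n$ is invertible, and $K_n^{-1}$ is the unique matrix with $\sum_{z\in\mathtt{V}_n}K_n(x,z)\,K_n^{-1}(z,y)=\mathbbm{1}_{x=y}$ for all $x,y$. Because $K_n$ is antisymmetric and extremely sparse --- each row $K_n(x,\cdot)$ is supported on the neighbours of $x$ with the explicit weights in \eqref{KasteleynMatrix} --- this system is a collection of local difference equations in the first coordinate, one for each type of $x$ (source, sink, or diagonal vertex). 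First I would fix $y$ and read off these equations: the even-parity rows relate the value at $x$ to those at its horizontal and vertical neighbours, the odd-parity rows do the same with the opposite orientation, and the diagonal edges supply the extra terms that make the graph non-bipartite. The goal is then to check that the candidate solves all of them, with the inhomogeneity $\mathbbm{1}_{x=y}$ appearing exactly at $x=y$.

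The candidate is organised so that this verification decouples into a bulk part and a boundary part. The functions $t_n^{\epsilon_i,\epsilon_j}(i,j)$ play the role of $K_n^{-1}$ when both vertices sit at the special ``top'' positions computed in \cref{sec:bothtop}, while the double binomial sums in \eqref{Kinv11}--\eqref{Kinv10} are depth-raising operators moving each vertex into the bulk along the coordinates $i_2,j_2$. I would verify that these sums solve the homogeneous vertical recurrence using the negative-binomial identity $\sum_{\ell\geq 0}\binom{m-1+\ell}{\ell}z^\ell=(1-z)^{-m}$, so that a vertical step applied to $\sum_{\ell}(-1)^{\ell}\binom{i_2-1+\ell}{\ell}t_n^{1,1}(i_1-\ell,\cdot)$ reproduces the shift in $i_2$ without creating a source. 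The horizontal recurrences are checked directly on the contour-integral representation, where each difference operator acts by multiplying the $r_1$-integrand by a polynomial and shifting the exponent of $r_1$; the delta source is then produced by the residue mechanism standard for such inverses, the residue at $r_1=0$ vanishing unless the exponent aligns, which happens precisely at $x=y$, with the weight $p(n,k,\ell)$ tuned so that this residue equals $1$. The additive single-vertex terms $[j+1]_2 h_n^{0,\mathbf{b}}(i)-[i+1]_2 h_n^{0,\mathbf{b}}(j)$ and their analogues are exactly the ingredients of \cref{thm:vertexb}, so specialising $y\to\mathbf{b}$ must collapse the general formula back to \eqref{Knb1}--\eqref{Knb0}; this specialisation is both the anchoring boundary condition and the first consistency check I would carry out.

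Away from $x=y$ the coupling kernel $\frac{s_1-s_2}{(s_1s_2-1)s_1^{\ell_1+1}s_2^{\ell_2+1}}$ is holomorphic in the relevant region, and the local relations close up by means of the combinatorial identities deferred to \cref{sec:identities}: these are what guarantee that the quadruple sum over $k_1,\ell_1,k_2,\ell_2$ collapses correctly. The three cases are handled in parallel and differ only in their source behaviour, with the mixed case $\epsilon_i\neq\epsilon_j$ carrying an extra explicit term. The main obstacle, and the reason for that term, is the pole of the coupling kernel at $s_1s_2=1$. To reduce the double integral to products of the single-vertex integrals of \cref{thm:vertexb} one must decouple the $s_1,s_2$ contours, which requires deforming one contour across $s_1s_2=1$; the residue picked up there is a finite rational sum that should evaluate, via a Chu--Vandermonde type identity, to $(-1)^{i_2+j_2}\binom{j_2-i_2-1}{i_1-j_1}$, supported exactly on the range $x_1\geq y_1$, $x_1+x_2<y_1+y_2$ recorded by the two indicators in \eqref{Kinv10}. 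Getting the sign of this residue and the orientation of the deformation right --- in the antisymmetric setting where $K_n^{-1}(x,y)=-K_n^{-1}(y,x)$ must hold throughout --- is the delicate computational heart of the argument, and I expect this, rather than the bulk recurrences, to be where most of the work lies.
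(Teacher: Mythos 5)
Your overall route---take the stated right-hand sides as a candidate, and verify the defining system $K_n.K_n^{-1}=\mathbbm{I}$ using uniqueness of the inverse---is legitimate in principle, but it is exactly the route the paper explicitly sets aside as ``computationally very difficult,'' and your proposal does not overcome that difficulty: every step where the difficulty actually lives is deferred. The paper's proof is constructive, not verificational. It verifies the linear system only for the single column $K_n^{-1}(\cdot,\mathbf{b})$ (\cref{thm:vertexb}); the genuinely two-point information then comes from Kuo's graphical condensation (\cref{prop:Kuo,lem:condensation}), which yields a recurrence for the top-row entries $R_n(i,j)$ with the one-point data $T_n(i)$ as seed, and this recurrence is solved by generating functions (\cref{lem:Req,lem:finalR}). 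Only after that are the matrix equations used, and only to transport the resulting boundary formula into the bulk (\cref{sec:main}), which is how \eqref{tn11} and its companions arise. In particular, the coupling kernel $\frac{s_1-s_2}{(s_1s_2-1)s_1^{\ell_1+1}s_2^{\ell_2+1}}$ is \emph{derived} in the paper, never verified against the linear system. In your plan it must be verified, and the rows at the boundary (the top row $x_2=2n+1$, the diagonal, the vertex $\mathbf{b}$) are where this bites: in the one-point case those rows already required the nontrivial hypergeometric identities of \cref{sec:identities} (\cref{thm:sumf}, \cref{thm:sumg}, \cref{thm:sumf'}). For the two-point candidate, with its product weights $p(n,k_1,\ell_1)p(n,k_2,\ell_2)$ and the $s_1,s_2$ coupling, the analogous checks would need two-variable analogues of those identities, which you neither state nor prove, and which appear nowhere in the paper because the paper never performs such a verification.

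Separately, your diagnosis of the extra term in \eqref{Kinv10} is not correct. You attribute it to a residue picked up at $s_1s_2=1$ when decoupling the $s$-contours; but that pole is present in all four kernels \eqref{tn00}--\eqref{tn11}, while the extra term occurs only in the mixed-parity case, so a contour-crossing mechanism cannot be its origin. The actual mechanism is visible from the parity structure of the system: edges of $G_n$ (other than the diagonal ones) join vertices of opposite parity, so for $y$ of even parity the rows of $K_n.K_n^{-1}=\mathbbm{I}$ indexed by even-parity $x$ constrain the mixed-parity entries $K_n^{-1}(z,y)$ and carry the inhomogeneity $\mathbbm{1}_{x=y}$, whereas the rows indexed by odd-parity $x$ constrain the equal-parity entries and are homogeneous. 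Hence the mixed-parity entries are the ones that must absorb the delta, and the term $(-1)^{i_2+j_2}\mathbbm{1}_{x_1\geq y_1}\mathbbm{1}_{x_1+x_2<y_1+y_2}\binom{j_2-i_2-1}{i_1-j_1}$ is the accumulated shadow of that source under the propagation recurrences---which is why it is supported on precisely that region and why it is absent from \eqref{Kinv11} and \eqref{Kinv00}. Without this accounting, the verification you propose would fail in the mixed case, or would have to reproduce this term through residue bookkeeping that you have not carried out. So as it stands the proposal is a sketch of the brute-force check the paper deliberately avoids, with its two hardest ingredients---the two-point closure identities and the correct origin of the source term---missing.
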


Note that the case of $\epsilon_i =0$ and $\epsilon_j = 1$ in \cref{thm:main} is settled by the antisymmetry of $K_n$.
\cref{thm:vertexb} and \cref{thm:main} could be proven in principle by verifying the matrix equation $K_n.K_n^{-1}=\mathbb{I}$ directly. 
However, this approach seems computationally very difficult.

Our proof strategy is as follows. We first establish \cref{thm:vertexb} in \cref{sec:vertexb}.
  Once this is established, we use graphical condensation~\cite{Kuo06} to establish $K^{-1}_n(x,y)$ for $x$ and $y$ on the top boundary of the TSSCPP.  We can then recover the other entries using recursions obtained from the matrix equations $K_n^{-1}.K_n=K_n.K_n^{-1}=\mathbb{I}$ viewed entrywise. 
The proof is given in \cref{sec:main}, while postponed proofs of results used in proving \cref{thm:main} are given in \cref{sec:identities}.

This leads to the question whether there is a set of recurrences that establishes $K_n^{-1}$ uniquely, without relying on a priori expressions for $K_n^{-1}(\cdot,\mathbf{b})$. Such recurrences are known for domino tilings of the Aztec diamond~\cite{CY14} and are expected to hold for lozenge tilings~\cite{Oko09}. In fact, there is an additional recurrence, along with the recurrence from graphical condensation given in \cref{lem:condensation} which parametrizes $K_n^{-1}$ on the boundary. These two recurrences give a concrete and explicit example of the additional recurrences postulated in \cite{Oko09} for lozenge tilings. These recurrences combined with the recurrences from the matrix equations $K_n.K_n^{-1}=K_n^{-1}.K_n=\mathbbm{I}$ give a unique way to determine $K_n^{-1}$.  This additional recurrence is given in \cref{sec:additionrec}. 

One consequence of finding $K_n^{-1}$ is a formula to compute local statistics~\cite{MPW63,Ken97}.  We restate this here in a form that applies to our situation. 

\begin{thm}[{Montroll-Potts-Ward~\cite{MPW63}, Kenyon~\cite{Ken97}}]
\label{thm:localstats}
	The probability of edges $e_1=(v_1,v_2),\dots, e_m=(v_{2m-1},v_{2m})$ on $\mathtt{V}_n$ is given by 
	\begin{equation}
		\mathbb{P}[e_1,\dots, e_n] = \prod_{k=1}^m K_n(v_{2k-1},v_{2k}) \; \mathrm{Pf} \left( K^{-1}_n(v_i,v_j) \right)^T_{1 \leq i,j \leq 2m}.
	\end{equation}
\end{thm}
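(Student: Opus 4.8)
The plan is to reduce the statement to a ratio of partition functions and then to convert that ratio into a Pfaffian of $K_n^{-1}$ via the Pfaffian analogue of Jacobi's cofactor identity. Write $S=\{v_1,\dots,v_{2m}\}$ for the set of the $2m$ endpoints of $e_1,\dots,e_m$. First I would note that, since the measure is uniform on perfect matchings of $G_n$,
\[
\mathbb{P}[e_1,\dots,e_m]=\frac{\#\{M \text{ a perfect matching}: e_1,\dots,e_m\in M\}}{Z_n}=\frac{Z_n^{\{v_1,\dots,v_{2m}\}}}{Z_n}.
\]
The second equality is the elementary bijection: a perfect matching of $G_n$ containing all of $e_1,\dots,e_m$ is precisely a perfect matching of the induced graph on $\mathtt{V}_n\setminus S$ obtained by deleting the endpoints, so the numerator is exactly $Z_n^{\{v_1,\dots,v_{2m}\}}$.

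Next I would apply Kasteleyn's theorem to both partition functions. For the denominator this is immediate: $Z_n=|\mathrm{Pf}(K_n)|$. For the numerator I would use that $Z_n^{\{v_1,\dots,v_{2m}\}}=|\mathrm{Pf}(K_n[\mathtt{V}_n\setminus S])|$, where $K_n[\mathtt{V}_n\setminus S]$ is the principal submatrix of $K_n$ on the rows and columns indexed by $\mathtt{V}_n\setminus S$, equivalently the Kasteleyn matrix of the induced subgraph with the inherited orientation. This step needs justification, since the inherited orientation need not satisfy the Kasteleyn condition face by face. However, the relative sign between two matchings $M,M'$ of $G_n\setminus S$ in the Pfaffian expansion is governed by the cycles of $M\triangle M'$, and these cycles avoid $S$, hence are genuine cycles of $G_n$ on which the Kasteleyn property of $K_n$ applies. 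One therefore still obtains a uniform sign over all matchings of the induced graph, so $|\mathrm{Pf}(K_n[\mathtt{V}_n\setminus S])|$ equals the count $Z_n^{\{v_1,\dots,v_{2m}\}}$.

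The key algebraic input is the Pfaffian minor--complement identity: for an invertible skew-symmetric matrix $K$ and an even subset $S$ of its index set,
\[
\mathrm{Pf}(K[\mathtt{V}_n\setminus S])=(-1)^{\sigma(S)}\,\mathrm{Pf}(K)\,\mathrm{Pf}\bigl(K^{-1}[S]\bigr),
\]
where $K^{-1}[S]$ is the principal submatrix of $K^{-1}$ on rows and columns in $S$, and $(-1)^{\sigma(S)}$ depends only on the positions of $S$. Dividing the two Kasteleyn expressions and taking absolute values converts the ratio of partition functions into $\bigl|\mathrm{Pf}(K_n^{-1}[S])\bigr|$. Combining this with the bijection step, and expanding the Pfaffian in the chosen ordering $v_1,\dots,v_{2m}$ so that each forced edge $e_k=(v_{2k-1},v_{2k})$ contributes the factor $K_n(v_{2k-1},v_{2k})$, yields the asserted identity. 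The transpose in $\bigl(K_n^{-1}(v_i,v_j)\bigr)^T$ accounts precisely for the factor $(-1)^m$ in $\mathrm{Pf}(A^T)=(-1)^m\mathrm{Pf}(A)$ for a $2m\times 2m$ skew-symmetric $A$, and so reproduces the single-edge normalisation $\mathbb{P}[e]=K_n(v_1,v_2)\,K_n^{-1}(v_2,v_1)$.

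The main obstacle I expect is the sign bookkeeping, together with the subgraph-orientation point above. Three sign conventions interact: the global sign relating $\mathrm{Pf}(K_n)$ to $Z_n$, the sign $(-1)^{\sigma(S)}$ in the minor identity (which records how $S$ interleaves with its complement), and the ordering of $v_1,\dots,v_{2m}$ implicit in both the product $\prod_{k=1}^m K_n(v_{2k-1},v_{2k})$ and the Pfaffian $\mathrm{Pf}(K_n^{-1}[S])$. Since the left-hand side is a genuine probability, these must conspire to produce a nonnegative real number, and verifying this cancellation is the delicate part; the bijection and the application of Kasteleyn's theorem are otherwise routine. As this is a classical result, the detailed sign analysis may be cited from \cite{MPW63,Ken97}.
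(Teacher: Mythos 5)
The paper offers no proof of this theorem: it is quoted directly from \cite{MPW63,Ken97}, so your attempt must be measured against the classical argument of those references, and that is indeed the route you follow (probability as a ratio of partition functions, Kasteleyn's theorem for numerator and denominator, the Pfaffian analogue of Jacobi's minor identity). The skeleton is correct, and your remark that the transpose accounts for the sign is consistent with how the paper uses the $m=1$ case in \cref{prop:Kbeven}.

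There is, however, a genuine gap at the step $Z_n^{\{v_1,\dots,v_{2m}\}}=\bigl|\mathrm{Pf}\bigl(K_n[\mathtt{V}_n\setminus S]\bigr)\bigr|$. Your justification --- the cycles of $M\triangle M'$ avoid $S$, hence are genuine cycles of $G_n$, hence the Kasteleyn property gives a uniform sign --- proves too much: applied verbatim to an arbitrary even vertex set $S$ it would show that monomer correlations are Pfaffians of principal submatrices of $K_n^{-1}$, which is false and is exactly the classical obstruction in the monomer--dimer problem. What the Kasteleyn condition controls is the following: the relative sign of the Pfaffian terms of two matchings is $(-1)^N$, where $N$ is the total number of vertices \emph{enclosed} by the superposition cycles. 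So it is not enough that a superposition cycle is a cycle of $G_n$; one must check that it encloses an even number of vertices. For matchings of $G_n\setminus S$, the enclosed vertices of $\mathtt{V}_n\setminus S$ are matched among themselves and hence even in number, but enclosed vertices of $S$ are unmatched and could a priori be odd in number. The claim is rescued only because $S$ is a union of the edges $e_1,\dots,e_m$: in a planar embedding, a cycle avoiding both endpoints of $e_k$ cannot separate them (the edge would have to cross the cycle), so it encloses zero or two endpoints of each $e_k$, whence $N$ is even. You need this observation; alternatively, you can bypass the claim entirely by summing the signed terms of the matchings containing $e_1,\dots,e_m$ inside $\mathrm{Pf}(K_n)$ itself, where the sign of a pairing factors as a sign depending only on $S$ times the sign of the induced pairing of $\mathtt{V}_n\setminus S$, and only then invoking the Jacobi identity --- this uses sign-uniformity only for the full graph, where it is unproblematic. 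With that repair, and with the residual sign bookkeeping delegated to \cite{MPW63,Ken97} as you propose (which is fair, since the paper cites the entire theorem), the argument becomes the standard one.
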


\begin{figure}[htbp!]
\begin{center}
	\includegraphics[width=\textwidth]{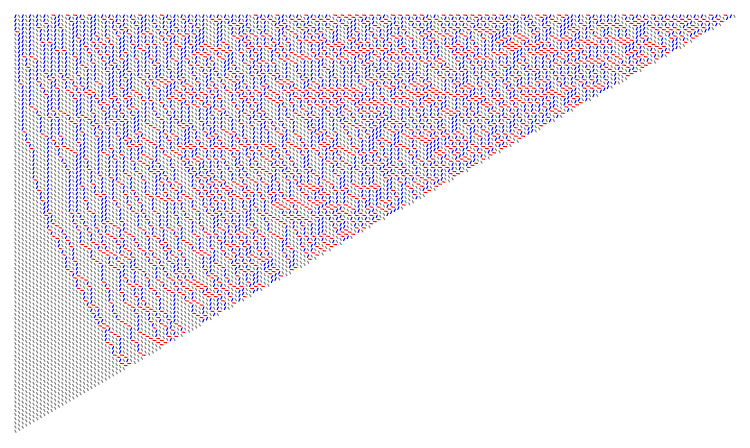}
	\caption{A simulation, using Glauber dynamics, of a uniformly random TSSCPP of size 100. Here, we have rotated the hexagonal graph in \cref{fig:hexagons} by $\pi/6$. The simulations only show the dimers on this graph, drawn in different three different colors.   } 
	\label{fig:simulation}
\end{center}
\end{figure}

The term ``sum rule'' refers to certain sums of correlation functions that are of importance. In many cases, these sum rules have simple formulas. For the so-called quantum Knizhnik-Zamolodchikov equation, sum rules have been conjectured relating them to the $q$-enumeration of TSSCPPs by Di Francesco~\cite{dif-2006}, and Di Francesco and Zinn-Justin~\cite{ZinDif05,ZinDif08} and proved by 
Zeilberger~\cite{zeilberger-2007} .

We now state a formula for a sum rule for TSSCPPs. For $0 \leq j \leq 2n-1$, let 
\begin{equation}
\label{eq:Kinvboundarycount}
	g_n^\mathbf{b}(j)=|K^{-1}_n((j,j),\mathbf{b})|=(-1)^{j+1}K^{-1}_n((j,j),\mathbf{b}) =\frac{Z_n^{\{(j,j),\mathbf{b}\}}}{Z_n}.
\end{equation}

\begin{thm}
\label{thm:gnb formula}
\[
\sum_{j=0}^{2n} g_n^{\mathbf{b}} (j) = 
\begin{cases}
n+1 & \text{$n$ even}, \\
n+ \frac 12 & \text{$n$ odd}.
\end{cases}
\]
\end{thm}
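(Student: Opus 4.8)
The plan is to reduce the sum to an explicit, $j$-independent quantity by inserting the closed form for $K_n^{-1}(\cdot,\mathbf b)$ from \cref{thm:vertexb} and then carrying out the sum over $j$ inside the contour integral. By \eqref{eq:Kinvboundarycount} we have $g_n^{\mathbf b}(j)=(-1)^{j+1}K_n^{-1}((j,j),\mathbf b)$, so the whole sum equals $\sum_{j=0}^{2n}(-1)^{j+1}K_n^{-1}((j,j),\mathbf b)$. A diagonal vertex $(j,j)$ is of the form $(i_1,i_1+2i_2)$ with $i_1=j$, $i_2=0$, so the formula \eqref{Knb0} collapses to the single term $K_n^{-1}((j,j),\mathbf b)=h_n^{0,\mathbf b}(j)$ for $0\le j\le 2n-1$. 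The endpoint $j=2n$ must be treated separately, since $h_n^{0,\mathbf b}$ is only defined up to $2n-1$ and $(2n,2n)$ either coincides with $\mathbf b$ (when $n$ is odd, giving $K_n^{-1}(\mathbf b,\mathbf b)=0$ by antisymmetry) or is the special vertex of \cref{thm:vertexb} with $K_n^{-1}((2n,2n),\mathbf b)=-1$ (when $n$ is even). Thus the $j=2n$ contribution to the sum is $1$ for $n$ even and $0$ for $n$ odd.

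For $0\le j\le 2n-1$ I would substitute \eqref{hnb0}. The elementary piece $-[j+1]_2$ contributes $\sum_{j=0}^{2n-1}(-1)^j[j+1]_2$, which counts the even $j$ in $\{0,\dots,2n-1\}$ and equals $n$. It then remains to evaluate the contour part $S'=\sum_{k=0}^n p(n,k,0)\,\frac{1}{2\pi\mathrm i}\int_{\Gamma_0}\mathrm dr\,\frac{(1+r)^{n-k}}{1-r}\sum_{j=0}^{2n-1}(-1)^{j+1}r^{2k-j}$. The crucial simplification is that the inner sum is a finite geometric series, equal to $\frac{r^{2k+1-2n}-r^{2k+1}}{r+1}$, which cancels one factor of $(1+r)$ and turns $S'$ into a sum of residue computations. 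The positive-power term $r^{2k+1}$ never produces a residue at $r=0$, while the term $r^{2k+1-2n}$ contributes the residue $[r^{2n-2k-2}]\frac{(1+r)^{n-k-1}}{1-r}=2^{n-k-1}$ for $k\le n-1$ and nothing for $k=n$. Hence $S'=\sum_{k=0}^{n-1}p(n,k,0)\,2^{n-k-1}$.

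Assembling the three pieces gives $\sum_{j=0}^{2n}g_n^{\mathbf b}(j)=n+S'+\mathbbm 1_{n\in 2\mathbb Z}$, so the theorem becomes equivalent to the single terminating identity
\begin{equation*}
\sum_{k=0}^{n-1}p(n,k,0)\,2^{n-k-1}=\frac{1-(-1)^n}{4}
=\begin{cases}0 & n\text{ even},\\[2pt] \tfrac12 & n\text{ odd}.\end{cases}
\end{equation*}
Indeed, for $n$ even this yields $n+0+1=n+1$, and for $n$ odd it yields $n+\tfrac12+0=n+\tfrac12$, exactly as claimed.

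This last hypergeometric identity is where the real work lies and is the main obstacle. Writing out $p(n,k,0)$ from \eqref{eq:pnkl} with the Catalan factor expanded exhibits the summand as a proper hypergeometric term in $k$, so I would apply creative telescoping (Zeilberger's algorithm or the WZ method) to produce a recurrence in $n$ for $S'(n)$ and then verify that $\frac{1-(-1)^n}{4}$ solves it, together with the base cases $S'(1)=\tfrac12$ and $S'(2)=0$. Alternatively, the sum can be matched to one of the combinatorial identities of \cref{sec:identities}, or re-derived by telescoping the values $g_n^{\mathbf b}(j)$ directly via the boundary recurrences of \cref{sec:additionrec}; the latter route is presumably why the authors house the sum rule there. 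The delicate points to get right are the residue bookkeeping at $r=0$ (ensuring that only the $r^{2k+1-2n}$ term survives and that the $k=n$ term drops out) and the two endpoint conventions at $j=2n$, whose parity dependence is precisely what produces the $+1$ versus $+\tfrac12$ discrepancy between the even and odd cases.
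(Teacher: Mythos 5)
Your proposal is correct and follows essentially the same route as the paper: insert the closed form from \cref{thm:vertexb}, carry out the alternating $j$-sum as a geometric series inside the contour integral, keep only the residue-producing term, and reduce everything to the terminating sum $\sum_{k=0}^{n-1}p(n,k,0)\,2^{n-k-1}$ (your endpoint treatment of $j=2n$ is a harmless bookkeeping variant of the paper's uniform treatment). The only difference is that the identity you flag as ``the main obstacle'' requires no new work: it is exactly \cref{thm:sumf} (already proved in \cref{sec:identities} by the creative-telescoping method you propose) combined with the evaluation $p(n,n,0)=(-1)^n$, which is precisely how the paper closes the argument.
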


We remark that the entries $g_n^{\mathbf{b}} (j)$ themselves are not so simple. For example,
\[
(g_2^{\mathbf{b}} (j))_{j=0}^4 = \left(1, \frac{2}{7}, \frac{1}{7},
\frac{4}{7}, 1 \right),
\]
and
\[
(g_3^{\mathbf{b}} (j))_{j=0}^6 = \left(1, \frac{1}{6}, \frac{1}{3},
\frac{11}{14}, \frac{17}{21}, \frac{17}{42}, 0 \right).
\]

The formula given in \cref{thm:main} is not in the most convenient form for asymptotic analysis.  Nonrigorous computations show that the terms in the formula can be approximated by double contour integral formulas, that share some similarities with lozenge tiling models in the Schur class, e.g. see~\cite{GP19} and references therein. Furthermore, these nonrigorous computations show a limit shape and Airy kernel statistics at the edge, which also appears in simulations; see \cref{fig:simulation}.  We will not attempt to pursue these here as the computations appear to be long and involved. However, we will give a conjecture of the limit shape, based on a short computation in \cref{sec:conjlimitshape}.

\begin{conj}\label{conj:limitshape}
Rescale the TSSCPP of size $n$ so that the three corners are given by $(-2,0)$, $(0,0)$ and $(-2,-2/\sqrt{3})$ as $n\to \infty$, that is, for $(x_1,x_2) \in \mathtt{V}_n$, rescale so that $x_1=[(X+2)n]$ and $x_2=[(\sqrt{3}Y+2)n]$. Then, the limit shape curve is given by 
	\begin{equation}
X^2+Y^2=4
	\end{equation}
so that the region $X^2+Y^2 \geq 4$ in the rescaled TSSCPP is frozen. 
\end{conj}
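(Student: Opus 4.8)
The plan is to establish the limit shape through a steepest-descent analysis of the edge densities, which are governed by the formula for $K_n^{-1}$ in \cref{thm:main} via the local-statistics formula of \cref{thm:localstats}. Taking $m=1$ in \cref{thm:localstats}, the probability that a given edge $e=(v_1,v_2)$ of $G_n$ is present equals $K_n(v_1,v_2)\,K_n^{-1}(v_1,v_2)$; since $K_n(v_1,v_2)\in\{+1,-1\}$ on edges, the whole problem reduces to the asymptotics of a single entry $K_n^{-1}(v_1,v_2)$ with $v_1,v_2$ adjacent, under the scaling $x_1=\lfloor (X+2)n\rfloor$, $x_2=\lfloor(\sqrt{3}Y+2)n\rfloor$. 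The limit shape curve is then the locus across which the limiting edge densities transition from a frozen value ($0$ or $1$) to a value strictly inside $(0,1)$; proving \cref{conj:limitshape} amounts to showing that this locus is exactly $X^2+Y^2=4$ and that the densities are pinned for $X^2+Y^2>4$.

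Second, I would bring the formulas \eqref{tn00}--\eqref{tn11} (together with \eqref{hnb0}--\eqref{hnb1}) into a form amenable to saddle-point analysis. The finite sums $\sum_k p(n,k,\ell)(\cdots)$, with $p(n,k,\ell)$ from \eqref{eq:pnkl}, should first be resummed: applying Stirling's formula to the factorials in \eqref{eq:pnkl} and to the binomial coefficients in \eqref{Kinv11}--\eqref{Kinv10}, the indices $k,\ell$ concentrate at scales of order $n$, so each sum becomes an integral with integrand of exponential type $e^{n\varphi(\cdot)}$. Combining these with the existing $r$- and $s$-contour integrals and performing the integrals that can be done explicitly, I expect the representation to collapse, for the edge entries, to a double contour integral $\frac{1}{(2\pi\mathrm{i})^2}\oint\oint \exp\!\big(n\,\Phi(w,z;X,Y)\big)\, R(w,z)\,\mathrm{d}w\,\mathrm{d}z$, as anticipated in the text, where $\Phi$ is an explicit rational ``action'' and $R$ a slowly varying prefactor. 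The antisymmetrizing factor $(s_1-s_2)/(s_1s_2-1)$ encoding the Pfaffian structure must be carried through this reduction and will contribute the $(s_1s_2-1)$ coupling to $\Phi$.

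Third, with $\Phi$ in hand I would run the standard saddle-point dichotomy. I would locate the critical points $\partial\Phi=0$: in the liquid region they form complex-conjugate pairs and the limiting density is recovered from the argument of the relevant saddle (exactly as for lozenge models in the Schur class), while in the frozen region the saddles are real and the descent contours can be deformed so that the density is pinned at $0$ or $1$. The arctic curve is precisely the coalescence locus, where $\partial\Phi=\partial^2\Phi=0$ simultaneously. Eliminating the saddle variable between these two equations yields a relation between $X$ and $Y$, which I expect to simplify --- after clearing the Stirling exponentials and using the explicit rational form of $\Phi$ inherited from \eqref{eq:pnkl} --- to $X^2+Y^2=4$; the sign of $\partial^2\Phi$ at the real saddle should then identify $X^2+Y^2>4$ as the frozen side, matching the boundary computation underlying the conjecture.

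The main obstacle will be the rigorous asymptotic control of the coupled, oscillatory sums. The summand $p(n,k,\ell)$ carries the alternating sign $(-1)^k$, the vanishing factor $(3n-3k+2)$, and the Catalan number $C_{n-k}$, so the resummation into an $e^{n\varphi}$ form must be handled carefully to avoid spurious exponential growth and to pin down the true dominant scale of $k$ and $\ell$. This is compounded by the $(s_1s_2-1)$ coupling from the Pfaffian structure, which prevents the integrals from factorizing and forces a genuinely multidimensional steepest-descent argument before the collapse to two effective variables can be justified. Establishing uniformity of all estimates over $(X,Y)$, and controlling the degeneration near the coalescence locus --- where an Airy-type scaling appears, consistent with the edge fluctuations visible in \cref{fig:simulation} --- is where the bulk of the difficulty lies, which is precisely why the statement is left as a conjecture here.
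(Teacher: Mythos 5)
Your outline is a sensible steepest-descent program, but it is not a proof and it contains a concrete gap at its pivotal step: the collapse of the four-fold sums in \eqref{tn00}--\eqref{tn11} --- with the alternating factor $(-1)^k$ and the vanishing factor $(3n-3k+2)$ in $p(n,k,\ell)$ from \eqref{eq:pnkl}, and the non-factorizing Pfaffian coupling $(s_1-s_2)/\bigl((s_1s_2-1)s_1^{\ell_1+1}s_2^{\ell_2+1}\bigr)$ --- into a double contour integral $\oint\oint e^{n\Phi(w,z;X,Y)}R(w,z)\,\mathrm{d}w\,\mathrm{d}z$ is asserted, never derived. Nothing in your plan produces the action $\Phi$, so the elimination step that is supposed to output $X^2+Y^2=4$ has no input; you are anticipating the answer rather than computing it. This is exactly the reduction the paper declines to attempt, stating that the bulk entries ``are currently not in the best form for asymptotic analysis'' and that the computations ``appear to be long and involved.'' A smaller slip: for $m=1$, \cref{thm:localstats} gives $\mathbb{P}[e]=K_n(v_1,v_2)K_n^{-1}(v_2,v_1)=-K_n(v_1,v_2)K_n^{-1}(v_1,v_2)$, since the Pfaffian of the transposed $2\times 2$ antisymmetric matrix picks up the reversed order of arguments (this is how it is used in \cref{prop:Kbeven}).

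For comparison, the paper's heuristic in \cref{sec:conjlimitshape} sidesteps the bulk kernel entirely and analyzes only the single column $K_n^{-1}((x_1,x_2),\mathbf{b})$ from \cref{thm:vertexb}, for $x_2$ odd. There the $\ell$-sum in \eqref{Knb1} collapses \emph{exactly} --- via a ${}_2F_1$ identity whose hypergeometric remainder contributes no residue --- to a factor $(1+r)^{-i_2}$ inside the $r$-integral of \eqref{hnb1}; the $r$-integral is then a single binomial coefficient, leaving a one-dimensional $k$-sum of Gamma-function ratios. Crucially, the alternating sign is handled exactly, not by the oscillatory resummation you propose, through the partial-fraction identity $\frac{(-1)^k}{k!\,(n-k)!}=\frac{1}{2\pi\mathrm{i}}\oint_{\Gamma_k}\frac{(-1)^n\,\mathrm{d}w}{\prod_{r=0}^n(w-r)}$, which converts the sum into a single $w$-contour integral; Stirling then yields an explicit $s(w)$ whose saddle equation $s'(w)=0$ has roots $\bigl(2X-Y^2+4\pm\sqrt{Y^2(X^2+Y^2-4)}\bigr)/(4-Y^2)$, with double roots precisely on $Y=0$ and on $X^2+Y^2=4$. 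That is why the statement remains a conjecture: the paper extracts the curve from one boundary column which, as it concedes, carries no direct probabilistic information, whereas your program --- reading off the frozen region from genuine edge densities --- is the right route in principle but is exactly the open, multidimensional analysis; if you want a computation you can actually complete, reproduce the single-column reduction above rather than attacking \eqref{tn11} head-on.
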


\section{Proof of \cref{thm:vertexb}} 
\label{sec:vertexb}

Before giving the proof of \cref{thm:vertexb}, we give two combinatorial identities which will be used below.   Their proofs are given in \cref{sec:identities}.  

\begin{thm}
\label{thm:sumf}
Let $n \in \mathbb{N}$. Let
\[
f_{n}(k) = p(n,k,0)2^{n-k} = \frac{ (n+k+1)!(2n-k+1)!}{k! (3n-k+2)!} (-1)^k 2^{n-k} (3n-3k+2) C_{n-k} .
\]
Then
\[
\sum_{k=0}^n f_{n}(k) = \frac{1}{2} (1 + (-1)^n) 
= \begin{cases}
1 & \text{if $n$ is even}, \\
0 & \text{if $n$ is odd}. 
\end{cases}
\]
\end{thm}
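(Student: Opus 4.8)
The plan is to prove the equivalent statement that $S_n := \sum_{k=0}^n f_n(k)$ satisfies the two-step recurrence $S_{n+2} = S_n$ together with the initial values $S_0 = 1$ and $S_1 = 0$; since $\tfrac12(1+(-1)^n)$ is the unique sequence meeting this data, this suffices. First I would record that $f_n(k)$ is a proper hypergeometric term in the pair $(n,k)$: every factor in the product formula is a ratio of factorials or a power, so both $f_n(k+1)/f_n(k)$ and $f_{n+2}(k)/f_n(k)$ are explicit rational functions of $(n,k)$. I would also note that the summand is supported exactly on $0 \le k \le n$: the factor $1/k!$ kills $k<0$, while the factor $1/(n-k)!$ hidden inside $C_{n-k} = (2n-2k)!/\big((n-k)!\,(n-k+1)!\big)$ kills $k>n$. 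Consequently $S_n = \sum_{k\in\mathbb{Z}} f_n(k)$ with all but finitely many terms vanishing, which is what makes the telescoping below clean.

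The main tool is creative telescoping (Zeilberger's algorithm). I would search for a rational certificate $R(n,k)$ such that, writing $G(n,k) = R(n,k)\,f_n(k)$, the termwise identity
\[
f_{n+2}(k) - f_n(k) = G(n,k+1) - G(n,k)
\]
holds for all integers $k$. Dividing through by $f_n(k)$ turns this into a single rational-function identity in $(n,k)$, checked using the two term ratios above; this is a finite mechanical verification. Summing over $k \in \mathbb{Z}$, the right-hand side telescopes to $0$ since $G$ has finite support in $k$, and the left-hand side collapses to $S_{n+2} - S_n$, giving $S_{n+2} = S_n$. More conservatively, Zeilberger's algorithm is guaranteed to return some order-two recurrence $c_2(n)S_{n+2}+c_1(n)S_{n+1}+c_0(n)S_n=0$ with polynomial coefficients; one then checks directly that $\tfrac12(1+(-1)^n)$ solves it, using $(-1)^{n+2}=(-1)^n$. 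I expect the minimal recurrence to be the clean $S_{n+2}=S_n$, given the period-two answer, but either form closes the argument.

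Finally I would dispatch the base cases by direct substitution: $S_0 = f_0(0) = 1$, and $S_1 = f_1(0) + f_1(1) = 1 + (-1) = 0$, after which induction on the recurrence finishes the proof. The hard part is purely computational, namely producing and certifying the rational function $R(n,k)$: because $f_n(k)$ mixes the ``cubic'' parameters $3n-k$, $3n-3k$ and $2n-2k$ with the oscillating factor $(-1)^k 2^{n-k}$, its term ratios are rational of bidegree roughly $(4,4)$, so $R$ is a correspondingly bulky rational function and the verification identity, while routine, is lengthy. I would first attempt Gosper's algorithm within fixed $n$, in the hope that $f_n(k)$ is summable in closed form (which would give $S_n$ at once as a difference of boundary values), but the factor $(-1)^k$ makes pure Gosper-summability unlikely; this is also why no single classical ${}_4F_3$ evaluation applies directly, since rewriting $S_n$ as a hypergeometric series produces argument $-\tfrac18$, outside the standard summable values. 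Creative telescoping is therefore the reliable route.
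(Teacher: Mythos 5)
Your overall strategy (creative telescoping plus base cases) is the same as the paper's, but your write-up contains a genuine error, and it is precisely the ``minor issue'' that the paper's own proof opens with: your claim that the summand is supported exactly on $0 \le k \le n$ is false. The factor $1/(n-k)!$ does not kill the term at $k = n+1$, because the numerator factor $(2n-2k)!$ is singular there as well: interpreting $C_{n-k} = \Gamma(2(n-k)+1)/\bigl(\Gamma(n-k+1)\Gamma(n-k+2)\bigr)$ the way any term-ratio--based algorithm must (via Gamma functions), one finds $C_{-1} = -\tfrac12$, a finite nonzero value, whence $f_n(n+1) = -(-1)^n/2 \neq 0$. (Only from $k = n+2$ onward does the term vanish, the denominator pole then having order two.) Consequently the quantity your telescoped identity controls is not $S_n$ but the full-support sum $\widetilde S_n := \sum_{k \in \mathbb{Z}} f_n(k) = S_n - (-1)^n/2$. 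The paper's reformulation is exactly this correction: it proves $\widetilde S_n = 1/2$ for all $n$, via an order-one certificate $f_n(k) - f_{n+1}(k) = h(n,k+1)-h(n,k)$ and the single base case $n = 0$.

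Your main line of attack survives by luck: since the boundary term $-(-1)^n/2$ has period two in $n$, one has $\widetilde S_{n+2} - \widetilde S_n = S_{n+2} - S_n$, so a shift-two certificate would still prove $S_{n+2} = S_n$, and your base cases $S_0 = 1$, $S_1 = 0$ are correct. But nothing in your argument notices or uses this cancellation, so as written the step ``the left-hand side collapses to $S_{n+2}-S_n$'' is unjustified. Worse, your conservative fallback fails outright: the minimal recurrence Zeilberger's algorithm finds here is first order (the full sum is constant), i.e.\ $\widetilde S_{n+1} = \widetilde S_n$; read incorrectly as a recurrence for $S_n$ it says $S_{n+1} = S_n$, which the sequence $\tfrac12\bigl(1+(-1)^n\bigr)$ does not satisfy and which contradicts your own base cases. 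The correct translation is $S_{n+1} = S_n - (-1)^n$, which does hold. So keep the telescoping machinery, but first repair the support analysis, and then either work with $\widetilde S_n$ and prove it is constant (as the paper does), or verify the shift-two cancellation explicitly before summing over $k$.
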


C. Krattenthaler has kindly shown us how to derive \cref{thm:sumf} starting from an identity in the book by Gasper and Rahman~\cite[Equation (3.8.12)]{gasper-rahman-2004}. However, we use Zeilberger's algorithm to prove it in \cref{sec:identities}.

\begin{thm}
\label{thm:sumg}
Let $n \in \mathbb{N}$ and $0 \leq i \leq n$. Define
\[
g_{n,i}(k,j) =  \frac{1}{2^{n-k}} f_n(k) \binom{2n+1-i-k}{j}.
\]
Then, we have
\begin{equation}
\label{id1}
G_{n,i} = \sum_{k=0}^n \sum_{j=0}^{n+i-2k} g_{n,i}(k,j) = 2^{n-i},
\end{equation}
and
\begin{equation}
\label{id2}
G'_{n,i} = \sum_{k=0}^n \sum_{j=0}^{n+k-2i} g_{n,i}(k,j) = (-1)^n 2^{n-i}. 
\end{equation}
\end{thm}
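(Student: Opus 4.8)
The final statement to prove is \cref{thm:sumg}, which asserts two closely related double-sum evaluations, \eqref{id1} and \eqref{id2}. Since $f_n(k)$ already carries a factor $2^{n-k}$, the quantity $g_{n,i}(k,j) = f_n(k)\binom{2n+1-i-k}{j}/2^{n-k}$ is, up to the binomial coefficient, just $p(n,k,0)$ times a binomial. My plan is to reduce both identities to \cref{thm:sumf} (whose sum $\sum_k f_n(k)$ has the clean value $\tfrac12(1+(-1)^n)$) by first performing the inner sum over $j$ in closed form.

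\textbf{Step 1: the inner binomial sum.} For fixed $k$, the inner sum in \eqref{id1} is $\sum_{j=0}^{n+i-2k}\binom{2n+1-i-k}{j}$, a partial sum of a row of Pascal's triangle. The key observation I would exploit is that the upper limit $n+i-2k$ is tied to the parameter $2n+1-i-k$ of the binomial in a symmetric way: the midpoint of the row $\binom{2n+1-i-k}{\cdot}$ sits at $(2n+1-i-k)/2$, and the complementary index $(2n+1-i-k)-(n+i-2k) = n+k-2i+1$ is precisely the upper limit appearing in \eqref{id2} shifted by one. This reflection symmetry $j \mapsto (2n+1-i-k)-j$ is exactly what interchanges the partial sums in \eqref{id1} and \eqref{id2}, and it is the structural reason the two answers differ only by the sign $(-1)^n$. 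So I would first prove the relation $G_{n,i} + G'_{n,i} = \sum_k f_n(k)/2^{n-k} \cdot \big(2^{2n+1-i-k} + (\text{correction from the overlap/central term})\big)$ by adding the two partial sums and using $\sum_{j=0}^{M}\binom{M}{j}=2^{M}$; the partial sums are complementary and their overlap must be handled carefully depending on parity.

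\textbf{Step 2: combine with \cref{thm:sumf}.} After summing over $j$, each $k$-term becomes $f_n(k)/2^{n-k}$ times a power of $2$ of the form $2^{2n+1-i-k}/(\text{something})$; tracking the powers of two, the factors should collapse so that the remaining sum over $k$ is a multiple of $\sum_k f_n(k)$, which \cref{thm:sumf} evaluates. For the difference $G_{n,i}-G'_{n,i}$ I would instead use the antisymmetry of the reflection together with the central binomial term (present only when $2n+1-i-k$ is even, i.e.\ $i+k$ odd), reducing it to a signed version of the $f_n$-sum or to a telescoping/creative-telescoping argument. Solving the resulting linear system in $G_{n,i}$ and $G'_{n,i}$ then yields both $2^{n-i}$ and $(-1)^n 2^{n-i}$.

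\textbf{Main obstacle.} The delicate point is the overlap term when combining the two complementary partial binomial sums: because the upper limits $n+i-2k$ and $n+k-2i$ depend on $k$ (and their sum relative to the row width shifts with $k$), the central/boundary contributions are not uniform across $k$, and their parity-dependent corrections are what ultimately produce the $\tfrac12(1+(-1)^n)$ versus $\tfrac12(1-(-1)^n)$ behavior. Rather than fight these corrections term by term, I expect the cleanest route is to prove \eqref{id1} and \eqref{id2} directly by Zeilberger's algorithm (creative telescoping) on the full double sums, exactly as the authors did for \cref{thm:sumf}: treat $G_{n,i}$ (and $G'_{n,i}$) as functions of $n$ with $i$ a parameter, find a recurrence in $n$, and check that $2^{n-i}$ (resp.\ $(-1)^n 2^{n-i}$) satisfies the same recurrence with matching initial conditions. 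This sidesteps the manual bookkeeping of partial binomial sums entirely and is consistent with the paper's stated methodology of deferring such identities to algorithmic proof in \cref{sec:identities}.
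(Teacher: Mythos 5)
Your first step is sound and is in fact exactly the paper's opening move: reflecting $j\mapsto(2n+1-i-k)-j$ in the second partial sum shows that the ranges $[0,n+i-2k]$ and $[n+i-2k+1,2n+1-i-k]$ are \emph{exactly} complementary, so the two inner sums add up to the full row $2^{2n+1-i-k}$ with no overlap and no central term whatsoever; the ``parity-dependent corrections'' you flag as the main obstacle do not exist. Adding \eqref{id1} and \eqref{id2} then collapses, via $f_n(k)/2^{n-k}\cdot 2^{2n+1-i-k}=2^{n+1-i}f_n(k)$, to $2^{n+1-i}\sum_k f_n(k)$, which \cref{thm:sumf} evaluates as $2^{n-i}\bigl(1+(-1)^n\bigr)$, consistent with the claimed values. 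This is precisely the paper's identity \eqref{jsum}.

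The genuine gap is in the second half: the sum gives only \emph{one} linear relation between $G_{n,i}$ and $G'_{n,i}$, and your two proposals for the missing equation do not hold up. The difference $G_{n,i}-G'_{n,i}$ is, for each $k$, a centrally symmetric block $\sum_{j=n+k-2i+1}^{n+i-2k}\binom{2n+1-i-k}{j}$ of $k$-dependent width; it has no closed form, and ``antisymmetry of the reflection'' gives nothing here because the reflection fixes this block rather than negating it. Your fallback---run creative telescoping on the double sums in $n$---is the right toolbox but is asserted rather than executed, and it is under-specified exactly where the work lies: the upper limits $n+i-2k$ are \emph{not} natural boundaries (the summand does not vanish there), so any telescoped recurrence produces boundary terms that must be compensated and then summed in closed form. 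The paper resolves this by proving one of the two identities separately via a recurrence in $i$ rather than $n$: a MultiSum certificate gives $g_{n,i}(k,j)-2g_{n,i+1}(k,j)=\Delta_j\bigl[g_{n,i+1}(k,j)-g_{n,i}(k,j)\bigr]$ (equation \eqref{recur}), the telescoped boundary terms assemble into an auxiliary single-sum identity $\sum_k f'_{n,i}(k)=0$ (\cref{thm:sumf'}, itself proved by Zeilberger's algorithm plus a recurrence argument), and the base case $G_{n,n}=1$ follows from \cref{thm:sumf}; then $G_{n,i}=2G_{n,i+1}$ gives \eqref{id1}, and \eqref{id2} follows from the sum relation you already have. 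Without producing such a certificate, handling its boundary terms, and supplying the base case, your outline stops short of a proof of either identity individually.
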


The proofs of \cref{thm:sumf},\cref{thm:sumg} are given in \cref{sec:identities}. 
First, we need the following two results.

\begin{prop}\label{prop:Kbeven}
When $n$ is even, we have $K^{-1}((2n,2n),\mathbf{b})=-1$.
\end{prop}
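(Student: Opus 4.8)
The plan is to exploit the fact that, when $n$ is even, the vertex $\mathbf{b}$ is a pendant vertex of $G_n$, so that the corresponding row of $K_n$ is extremely sparse and the entry can be read off directly from the matrix equation $K_n K_n^{-1} = \mathbb{I}$. First I would record that for $n$ even we have $[n]_2 = 0$, so $\mathbf{b} = (2n,2n+1)$, and that this vertex lies in $\mathtt{V}_n$ since the point $(2n,2n+1)$ is removed from the vertex set only when $n$ is odd.

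The key step is to verify that $\mathbf{b} = (2n,2n+1)$ has a unique neighbor in $G_n$, namely $(2n,2n)$. I would do this by checking the three families in $\mathtt{E}_n$ against $\mathbf{b}$. For the horizontal edges, the only candidate has $\mathbf{b}$ as right endpoint with left endpoint $(2n-1,2n+1)$, but $[x_1+x_2]_2 = [4n]_2 = 0 \neq 1$, so it is excluded; and $\mathbf{b}$ cannot be a left endpoint because that requires $x_1 \leq 2n-1$. For the vertical edges, the only admissible one (using $n$ even, so the bound is $x_1 \leq 2n$) is $((2n,2n),(2n,2n+1))$, and $\mathbf{b}$ cannot be a lower endpoint since $x_2 \leq 2n$ is required. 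Finally $\mathbf{b}$ is off the diagonal, ruling out the diagonal edges. Hence $(2n,2n)$ is the unique neighbor of $\mathbf{b}$.

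Given this, I would compute the single relevant Kasteleyn entry from \eqref{KasteleynMatrix}. Since $[2n+2n]_2 = 0$, $|(2n+1)-2n| = 1$, and the first coordinates agree, the second clause gives $k_n((2n,2n),(2n,2n+1)) = 1$, while $k_n((2n,2n+1),(2n,2n)) = 0$ because $[(2n)+(2n+1)]_2 = 1$ and $(2n,2n+1)$ is off-diagonal; thus $K_n(\mathbf{b},(2n,2n)) = -1$. The proposition then follows by evaluating the $(\mathbf{b},\mathbf{b})$ entry of $K_n K_n^{-1} = \mathbb{I}$: since $\mathbf{b}$ has a single neighbor, $\sum_{y \in \mathtt{V}_n} K_n(\mathbf{b},y) K_n^{-1}(y,\mathbf{b}) = K_n(\mathbf{b},(2n,2n)) K_n^{-1}((2n,2n),\mathbf{b}) = 1$, so $K_n^{-1}((2n,2n),\mathbf{b}) = -1$.

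I do not anticipate a genuine obstacle here; the only thing requiring care is the bookkeeping of the parity conditions and the index bounds (which use $n$ even) when excluding the would-be neighbors of $\mathbf{b}$, together with the sign conventions in the definition of $K_n$. As a consistency check, since $\mathbf{b}$ is a leaf its incident edge is forced in every perfect matching, giving $Z_n^{\{(2n,2n),\mathbf{b}\}} = Z_n$, in agreement with $g_n^{\mathbf{b}}(2n) = 1$ as predicted by \eqref{eq:Kinvboundarycount}; the algebraic argument above additionally fixes the sign.
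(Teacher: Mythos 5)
Your proof is correct, but it takes a genuinely more elementary route than the paper's. Both arguments rest on the same structural fact, which you verify in detail and the paper uses implicitly: when $n$ is even, $\mathbf{b}=(2n,2n+1)$ is a pendant vertex of $G_n$ with unique neighbor $(2n,2n)$. The paper argues probabilistically: by \cref{thm:localstats}, the probability of the edge $((2n,2n),\mathbf{b})$ equals $K_n((2n,2n),\mathbf{b})\,K_n^{-1}(\mathbf{b},(2n,2n))=-K_n^{-1}((2n,2n),\mathbf{b})$, and since $\mathbf{b}$ is a leaf this edge lies in every perfect matching, so the probability is $1$. You instead stay at the level of linear algebra: the row of $K_n$ indexed by $\mathbf{b}$ has the single nonzero entry $K_n(\mathbf{b},(2n,2n))=-1$, so the $(\mathbf{b},\mathbf{b})$ entry of $K_nK_n^{-1}=\mathbb{I}$ collapses to $-K_n^{-1}((2n,2n),\mathbf{b})=1$. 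Your version buys self-containedness: it needs nothing beyond the definition \eqref{KasteleynMatrix} and the invertibility of $K_n$ (guaranteed by $|\mathrm{Pf}\,K_n|=Z_n>0$), and your explicit inspection of the edge set is exactly the justification the paper leaves unstated for ``the edge is always observed.'' The paper's version buys brevity and consistency of viewpoint: with \cref{thm:localstats} already in place the proof is one line, and it keeps the probabilistic reading of $K_n^{-1}$ that drives the rest of the paper. The two computations are of course closely related --- the local statistics formula applied to a single forced edge is essentially your row computation in disguise --- but the logical dependencies differ, with yours strictly weaker.
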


\begin{proof}
	Notice that for $n$ even, $K_n((2n,2n),\mathbf{b})=1$. By \cref{thm:localstats}, the probability of observing the edge $((2n,2n),\mathbf{b})$ is equal to $$K_n((2n,2n),\mathbf{b}) K_n^{-1}(\mathbf{b},(2n,2n))=-K_n^{-1}((2n,2n),\mathbf{b}).$$ Since the edge $((2n,2n),\mathbf{b})$ is always observed when $n$ is even, the result follows.
\end{proof}

\begin{prop} 
From the formulas for $K_n^{-1}(\cdot, \mathbf{b})$ given in~\eqref{Knb1} and~\eqref{Knb0}, we have that
\label{prop:claim}
	\begin{equation}
		\begin{split}
			K_n^{-1}((2n-2,2n-2),\mathbf{b})+K_n^{-1}((2n-2,2n-1),\mathbf{b})& +K_n^{-1}((2n-1,2n),\mathbf{b})\\
&=\begin{cases}
1 & \text{if $n$ is even}, \\
0 & \text{if $n$ is odd}. 
\end{cases}
		\end{split}
	\end{equation}
\end{prop}

\begin{proof}
	We expand out the left side using the definitions of $K_n^{-1}(\cdot, \mathbf{b})$ as given in~\eqref{Knb1} and~\eqref{Knb0} which involves the formulas~\eqref{hnb1} and~\eqref{hnb0}. This gives
	\begin{equation}
		\begin{split}
			&\sum_{k=0}^n p(n,k,0) \frac{1}{2 \pi \mathrm{i}} \int_{\Gamma_0} \text{d}r 
\left(\frac{(1+r)^{n-k} }{(1-r)r^{2n-2-2k}} +\frac{(1+r)^{n-k}}{r^{(2n-2)+1-2k}} +\frac{(1-r)^{n-k}}{r^{2n-1+1-2k}} \right)\\
			&=\sum_{k=0}^n p(n,k,0) \frac{1}{2 \pi \mathrm{i}} \int_{\Gamma_0} \text{d}r \frac{(1+r)^{n-k} }{(1-r)r^{2n-2k}} =\sum_{k=0}^n p(n,k,0)2^{n-k},
		\end{split}
	\end{equation}
	where the first equality follows from simplifying the integrand and
	the last equality follows from pushing the contour through infinity and computing the residue at $r=1$. 
	The claim follows from \cref{thm:sumf}.
\end{proof}

We will also repeatedly use the standard integrals,
\begin{equation}
\label{integral1}
	\frac{1}{2\pi \mathrm{i}}\int_{\Gamma_0} \frac{\text{d}r}{r} \frac{(1+r)^b}{r^a} = \binom{b}{a},
\end{equation}
\begin{equation}
\label{integral2}
\frac{1}{2\pi \mathrm{i}}\int_{\Gamma_0} \frac{\text{d}r}{r} \frac{(1+r)^b}{(1-r)r^a} = \sum_{j=0}^a \binom{b}{j},
\end{equation}
\begin{equation}\label{integral3}
	\int_0^\infty \text{d}r \; r^a e^{-r}=a!,
\end{equation}
and
\begin{equation}
	\label{integral4}
	\frac{1}{2\pi \mathrm{i}}\int_{\Gamma_0} \frac{\text{d}r}{r} \frac{e^r}{r^a}= \frac{1}{a!}
\end{equation}
for positive integers $a$ and $b$.

\begin{proof}[Proof of \cref{thm:vertexb}]
We will prove the following, assuming \eqref{Knb1} and \eqref{Knb0}.
	\begin{equation}\label{Kinvboundary}
		K_n.K_n^{-1}(x,\mathbf{b})=\mathbbm{I}_{x=\mathbf{b}}
	\end{equation}
The determinant of $K_n$ is nonzero because it is the square of the partition function. Therefore the equations from~\eqref{Kinvboundary} are linearly independent and they guarantee the uniqueness of
 $K_n^{-1}(\cdot,\mathbf{b})$.
For the purposes of the proof, we introduce
	\begin{equation}
		E_n=\{ (x_1,x_2) \in \mathtt{V}_n \backslash \{\mathbf{b}\}: [x_1+x_2]_2 =0 \},
	\end{equation}
	\begin{equation}
O_n=\{ (x_1,x_2) \in \mathtt{V}_n \backslash \{\mathbf{b}\}: [x_1+x_2]_2 =1 \},
	\end{equation}
and
	\begin{equation}
		D_n=\{ (x_1,x_2) \in E_n: x_1=x_2 \}.
	\end{equation}
We will verify ~\eqref{Kinvboundary} in each of the four cases: $x \in E_n\backslash D_n$, $x \in D_n$, $x = \textbf{b}$ and finally $x \in O_n$.

\noindent
\textbf{Case (i)}:	We first consider $x=(x_1,x_2) \in E_n \backslash D_n$. Expanding out the left side of~\eqref{Kinvboundary} entrywise gives
	\begin{equation} \label{Kinvboundaryproof1}
		K^{-1}_n (x+(-1,0), \mathbf{b})\mathbbm{1}_{x_1>0}  + K^{-1}_n (x+(0,1),\mathbf{b}) \mathbbm{1}_{x_2<2n+1} +K^{-1}_n(x+(0,-1),\mathbf{b}). 
	\end{equation}
	Write $x=(i_1,i_1+2i_2)$ and suppose that $x_1 \not = 0$ and $x_2 \not = 2n+1$. Then~\eqref{Kinvboundaryproof1} becomes by~\eqref{Knb1} 
\begin{equation} \label{Kinvboundaryprooftoprelation}
\begin{split}
&(-1)^{i_2} \Bigg( \sum_{\ell=0}^{i_1-1}(-1)^\ell  \binom{ i_2-1+\ell}{ \ell} h_n^{1,\mathbf{b}}(i_1-\ell-1) \\ 
&	+\sum_{\ell=0}^{i_1}(-1)^\ell \binom{ i_2-1+\ell}{ \ell} h_n^{1,\mathbf{b}}(i_1-\ell) 
-\sum_{\ell=0}^{i_1}(-1)^\ell \binom{ i_2-2+\ell}{ \ell}  h_n^{1,\mathbf{b}}(i_1-\ell) \Bigg) \\
=& (-1)^{i_2}  \Bigg( \sum_{\ell=0}^{i_1-1}(-1)^\ell 	\binom{ i_2-1+\ell}{ \ell}  h_n^{1,\mathbf{b}}(i_1-\ell-1) \\
& +\sum_{\ell=0}^{i_1}(-1)^\ell\binom{ i_2-2+l}{ \ell-1}  h_n^{1,\mathbf{b}}(i_1-\ell) \Bigg)=0,
\end{split}
\end{equation}
where we have used $ \binom{n}{k}+ \binom{n}{k-1}= \binom{n+1}{k}$
in the last step.
This verifies~\eqref{Kinvboundary} when $x=(x_1,x_2) \in E_n \backslash D_n$ and $x_1 \not =0$ and $x_2 \not =2n+1$.
	We now consider~\eqref{Kinvboundaryproof1} when $x_1=0$. Notice that $K^{-1}_n ((0,2i_2+1),\mathbf{b}) = h_n^{1,\mathbf{b}}(0)(-1)^{i_2}$ and so by~\eqref{Knb1} we have also verified~~\eqref{Kinvboundary} for $(0,x_2) \in E_n \backslash D_n$.  Next we consider when $x_2=2n+1$,  which means that~\eqref{Kinvboundaryproof1} becomes 
\begin{equation} \label{Kinvboundaryproof1topright}
K^{-1}_n (x+(-1,0), \mathbf{b})\mathbbm{1}_{x_1>0}  +K^{-1}_n(x+(0,-1),\mathbf{b}). 
\end{equation}
Take $x+(-1,0)=(x_1-1,2n+1)=(i_1,2n+1)$ so that $i_1=x_1-1$. This means that we have
$$
x+(-1,0)=\big(i_1,i_1+2\frac{2n-i_1}{2}+1 \big)
$$
and
$$
x+(0,-1)=\big(i_1+1,i_1+1+2\frac{2n-i_1-2}{2}+1 \big).
$$
Using~\eqref{Knb1}, we have that~\eqref{Kinvboundaryproof1topright} becomes
\begin{equation}
\begin{split} \label{Kinvboundaryproof1toprightrearrange}
&\sum_{\ell=0}^{i_1} (-1)^{n-\frac{i_1}{2}+\ell} \binom{n-\frac{i_1}{2}-1+\ell}{\ell} h_n^{1,\mathbf{b}}(i_1-\ell)\\
&   +\sum_{\ell=0}^{i_1+1} (-1)^{n-\frac{i_1}{2}-1+\ell} \binom{n-\frac{i_1}{2}-2+\ell}{\ell} h_n^{1,\mathbf{b}}(i_1+1-\ell)\\
&=-\sum_{\ell=0}^{i_1+1} (-1)^{n-i_1/2+\ell} \binom{n-\frac{i_1}{2}-1+\ell}{\ell} h_n^{1,\mathbf{b}}(i_1+1-\ell),
\end{split}
\end{equation}
where the last equation is found from rearranging the relation in~\eqref{Kinvboundaryprooftoprelation}. We need to show that the right side of the above equation is equal to zero.  
To do so, we expand out the above term using~\eqref{hnb1} which gives
	\begin{equation} \label{Kinvboundaryproofeq2}
		(-1)^{n -i_1/2}\sum_{k=0}^n p(n,k,0) \frac{1}{2\pi \mathrm{i}} \int_{\Gamma_0} \frac{\text{d}r}{r} \frac{(1+r)^{n-k}r^{2k}}{r^{i_1+1}} \sum_{\ell=0}^{i_1+1} (-r)^\ell \binom{n-i_1/2+\ell-1}{\ell} .
	\end{equation}
The inner sum in the integrand equals 
	\begin{equation}
		\begin{split}
			&\sum_{\ell=0}^{i_1+1} (-r)^\ell \binom{n-i_1/2+\ell-1}{\ell} = (1+r)^{\frac{1}{2} (i_1-2n)} \\&+(-1)^{1+i_1} r^{2+i_1} \binom{n+i_1/2+1}{2+i} \pFq{2}{1}{1,2+i_1/2+n}{3+i}{-r},
		\end{split}
	\end{equation}
	where the ${}_2F_1$ terms is the Gaussian hypergeometric function. {Since the hypergeometric term is non-singular in $r$}, we see that~\eqref{Kinvboundaryproofeq2} is equal to 
	\begin{equation}
		(-1)^{n -i_1/2}\sum_{k=0}^n p(n,k,0) \frac{1}{2\pi \mathrm{i}} \int_{\Gamma_0} \frac{\text{d}r}{r} (1+r)^{i_1/2-k} r^{2k-(i_1+1)}.
	\end{equation}
By \eqref{integral1}, the integral above is equal to $\binom{i_1/2-k}{i_1+1-2k} = 0$. We have thus verified~\eqref{Kinvboundary} for $x \in E_n \backslash D_n$.  

\noindent
\textbf{Case (ii)}:	Next we consider $x=(x_1,x_2) \in D_n$ provided that $x_1 <2n-1$. For this case, we have that the left side of~\eqref{Kinvboundary} entrywise is equal to
	\begin{equation}\label{Kinvboundarydiagonal}
		\big(K^{-1}_n(x-(1,1),\mathbf{b})  + K^{-1}_n(x-(1,0),\mathbf{b})\big)\mathbbm{1}_{x_1>0}+K^{-1}_n(x+(0,1),\mathbf{b})-K^{-1}_n(x+(1,1),\mathbf{b}).
	\end{equation}
	When $0<x_1<2n-1$, the integral contribution in~\eqref{Kinvboundarydiagonal} from using~\eqref{hnb0} and~\eqref{hnb1} is
	\begin{equation}
		\begin{split}
			&\frac{1}{2\pi \mathrm{i}} \int_{\Gamma_0} \text{d}r (1+r)^{n-k} r^{2k} \left( \frac{r^{-x_1+1}}{1-r} +r^{-x_1} +r^{-x_1-1} - \frac{r^{-x_1-1}}{1-r} \right) \\
			&=\frac{1}{2\pi \mathrm{i}} \int_{\Gamma_0} \text{d}r \frac{(1+r)^{n-k}}{1-r} r^{-x_1-1+2k} \left(r^2+(1-r)r+(1-r)-1 \right) =0,
		\end{split}
	\end{equation}
	which verifies~\eqref{Kinvboundary} when $x=(x_1,x_2) \in D_n$ for $0 < x_1 <2n$. When $x_1=0$, we have that~\eqref{Kinvboundarydiagonal} becomes
	\begin{equation}
		K^{-1}_n((0,1),\mathbf{b}) -K^{-1}_n ((1,1),\mathbf{b}) =0
	\end{equation}
by explicit evaluation (both terms have a contribution from $k=0$ in the sum in~\eqref{hnb0} and~\eqref{hnb1}).  When $x=(x_1,x_2) \in D_n$ and $x_1=2n-1$, the left side of~\eqref{Kinvboundary} entrywise is equal to 
\begin{equation}\label{Kinvboundarydiagonalodd}
	K_n^{-1}((2n-2,2n-2),\mathbf{b})+K_n^{-1}((2n-2,2n-1),\mathbf{b})+K_n^{-1}((2n-1,2n),\mathbf{b})- K_n^{-1}(\mathbf{b},\mathbf{b})
\end{equation}
if $n$ is odd, and 
\begin{multline}
\label{Kinvboundarydiagonaleven}
	K_n^{-1}((2n-2,2n-2),\mathbf{b})+K_n^{-1}((2n-2,2n-1),\mathbf{b}) \\
	+K_n^{-1}((2n-1,2n),\mathbf{b})- K_n^{-1}((2n,2n),\mathbf{b})
\end{multline}
if $n$ is even.   
From \cref{prop:claim} and \cref{prop:Kbeven}, we have that both~\eqref{Kinvboundarydiagonalodd} and~\eqref{Kinvboundarydiagonaleven} are equal to zero. 

\noindent
\textbf{Case (iii)}: Next we consider the left side of~\eqref{Kinvboundary} entrywise when $x=\mathbf{b}$ when $n$ is odd (the even case is immediate from \cref{prop:Kbeven}). Expanding out these terms using~\eqref{Knb1},~\eqref{Knb0},~\eqref{hnb1}, and~\eqref{hnb0} gives
\begin{equation}
	\begin{split}
		&K_n^{-1} ((2n-1,2n),\mathbf{b}) +K_n^{-1} ((2n-1,2n-1),\mathbf{b})\\
		&= \sum_{k=0}^n p(n,k,0) \frac{1}{2 \pi \mathrm{i}} \int_{\Gamma_0} \text{d}r \frac{(1+r)^{n-k}}{r^{2n-2k}} \left( 1+ \frac{r}{1-r} \right) \\
		&= \sum_{k=0}^n p(n,k,0) \frac{1}{2 \pi \mathrm{i}} \int_{\Gamma_0} \text{d}r \frac{(1+r)^{n-k}}{r^{2n-2k}}  \frac{1}{1-r}  \\ 
		&= \sum_{k=0}^{n-1} p(n,k,0) \frac{1}{2 \pi \mathrm{i}} \int_{\Gamma_0} \text{d}r \frac{(1+r)^{n-k}}{r^{2n-2k}}  \frac{1}{1-r}  \\ 
		&=\sum_{k=0}^{n-1} p(n,k,0)2^{n-k}=\frac{1}{2}(1+(-1)^n)-(-1)^n 
		=\left\{\begin{array}{ll}
			1 & \mbox{if $n$ odd}\\
		0 & \mbox{if $n$ even}\end{array} \right. 
	\end{split}
\end{equation}
where the third equality follows from the absence of a residue at $r=0$ when $k=n$, the fourth equality follows from \eqref{integral1},
and we have used \cref{thm:sumf} for the fifth equality.
We also need the left side of~\eqref{Kinvboundary} entrywise when $x=(2n,2n)$ which is given by
\begin{equation}
	K_n^{-1} ((2n-1,2n),\mathbf{b}) +K_n^{-1} ((2n-1,2n-1),\mathbf{b})-K_n^{-1} (\mathbf{b},\mathbf{b})=0,
\end{equation}
where the sum of the first two terms follows from the proceeding computation and we have used the antisymmetry of $K_n^{-1}$.  We have now verified~\eqref{Kinvboundary} for $x\in E_n \cup \{\mathbf{b} \}$.  

\noindent
\textbf{Case (iv)}:	Finally, we verify~\eqref{Kinvboundary} for $x \in O_n$. When $x=(x_1,x_2) \in O_n$, we have that the left side of~\eqref{Kinvboundary} entrywise equals 
\begin{equation}\label{Kinvboundaryodd}
	-K^{-1}_n (x+(0,1),\mathbf{b})\mathbbm{1}_{x_2<2n+1}-K^{-1}_n (x+(0,-1),\mathbf{b})-K^{-1}_n (x+(1,0),\mathbf{b}).
\end{equation}
If $x_2<2n+1$, using~\eqref{Knb0} and writing $x=(x_1,x_2)=(i_1,i_1+2i_2+1)$, we have that~\eqref{Kinvboundaryodd} becomes
\begin{equation}\label{Kinvboundaryoddrelation}
	\begin{split}
		&-\sum_{\ell=0}^{i_2+1} (-1)^{i_2+1} \binom{i_2+1}{\ell} h_n^{0,\mathbf{b}} (i_1+\ell) - \sum_{\ell=0}^{i_2} (-1)^{i_2} \binom{i_2}{\ell} h_n^{0,\mathbf{b}} (i_1+\ell) \\ &-\sum_{\ell=0}^{i_2} (-1)^{i_2} \binom{i_2}{\ell} h_n^{0,\mathbf{b}} (i_1+\ell+1) =0.
	\end{split}
\end{equation} 
Finally, if $x=(x_1,2n+1)$ then~\eqref{Kinvboundaryodd} becomes
\begin{equation} \label{Kinvboundaryproof0topright}
-K^{-1}_n (x+(0,-1), \mathbf{b})  -K^{-1}_n(x+(1,0),\mathbf{b}). 
\end{equation}
 This means that we have
$$
x+(0,-1)=\big(x_1,x_1+2\frac{2n-x_1}{2} \big)
$$
and
$$
x+(1,0)=\big(x_1+1,x_1+1+2\frac{2n-x_1}{2} \big).
$$
Using~\eqref{Knb0}, we have that~\eqref{Kinvboundaryproof0topright} becomes
\begin{equation}
\begin{split}
&-\sum_{\ell=0}^{n-\frac{x_1}{2}} (-1)^{n-\frac{x_1}{2}} \binom{n-\frac{x_1}{2}}{\ell} h_n^{0,\mathbf{b}} (x_1+\ell)
-\sum_{\ell=0}^{n-\frac{x_1}{2}} (-1)^{n-\frac{x_1}{2}} \binom{n-\frac{x_1}{2}}{\ell} h_n^{0,\mathbf{b}} (x_1+1+\ell)\\
&=\sum_{\ell=0}^{n+1-\frac{x_1}{2}} (-1)^{n+1-\frac{x_1}{2}} \binom{n+1-\frac{x_1}{2}}{\ell} h_n^{0,\mathbf{b}} (x_1+\ell)
\end{split}
\end{equation}
where we have used~\eqref{Kinvboundaryoddrelation}.  We need to show the right side of the above equation equals zero.
Expanding the above term out using the formula for~\eqref{hnb0} gives 
\begin{equation}
\begin{split}
&\sum_{k=0}^n  \frac{p(n,k,0)}{2\pi \mathrm{i}} 
\int_{\Gamma_0} \text{d}r \frac{(1+r)^{n-k}(1+1/r)^{n+1-x_1/2}r^{2k}}{(1-r)r^i } 
- \sum_{k=0}^{n+1-x_1/2} \binom{n+1-\frac{x_1}{2}}{k} [k+1]_2 \\
&=\sum_{k=0}^n \frac{p(n,k,0)}{2\pi \mathrm{i}}  
\int_{\Gamma_0} \text{d}r \frac{(1+r)^{2n-k+1-x_1/2}}{(1-r) r^{n+1+x_1/2-2k }} 
-2^{n-x_1/2}\\ 
&=\sum_{k=0}^n p(n,k,0)
\sum_{j=0}^{n+\frac{x_1}{2} -2k} \binom{ 2n+1-\frac{x_1}{2}-k}{j}-2^{n-x_1/2},
\end{split}
\end{equation}
where we have used \eqref{integral2} for the second equality. We use \cref{thm:sumg} on the first term of the last line of the above equation and so the above equation is equal to zero as required.

We have thus verified \eqref{Kinvboundary} for all $x$, completing the proof.
\end{proof}

\section{Both vertices at the top boundary}
\label{sec:bothtop}
We will first establish a formula for $K_n^{-1}$ when both entries are top at the boundary using a series of lemmas in \cref{sec:bothtop}. 

\begin{rem}
\label{rem:pendant}
We will repeatedly use the following property of perfect matchings.
If a leaf $\ell$ is adjacent to a vertex $v$ via an edge with weight 1, then removing both $\ell$ and $v$ from the graph does not change the partition function.
\end{rem}

We will make use of the following result below, which follows immediately from \cref{thm:localstats} by multiplying both sides of the equation in \cref{thm:localstats} by $Z_G$.

\begin{prop}[Graphical Condensation~\cite{Kuo06}] \label{prop:Kuo}
	Let $G$ be a plane graph with four vertices $a,b,c,d$ that appear in that cyclic order on a face of $G$. Then 
	\begin{equation}
		Z_G Z_{G\backslash \{a,b,c,d \}} +Z_{G\backslash \{a,c \}} Z_{G\backslash \{b,d \}}=Z_{G\backslash \{a,b \}} Z_{G\backslash \{c,d \}}+Z_{G\backslash \{a,d \}} Z_{G\backslash \{b,c\}}.
	\end{equation}
\end{prop}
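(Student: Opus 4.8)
The plan is to reduce the partition-function identity of \cref{prop:Kuo} to the purely algebraic expansion of a $4\times 4$ Pfaffian. The starting observation is that for weight-$1$ edges the probability of jointly observing $e_1,\dots,e_m$ is $Z_{G\backslash V}/Z_G$, where $V=\{v_1,\dots,v_{2m}\}$ is the set of their endpoints. Multiplying the equation of \cref{thm:localstats} by $Z_G$ and cancelling the nonzero factor $\prod_{k} K_n(v_{2k-1},v_{2k})$ therefore yields
\[
\frac{Z_{G\backslash V}}{Z_G} = \mathrm{Pf}\big(K_n^{-1}(v_i,v_j)\big)_{1\le i,j\le 2m}
\]
up to a Kasteleyn sign. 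First I would record the two instances I need (extending the edge-based statement to arbitrary even vertex sets via the Pfaffian analogue of Jacobi's minor identity, since pairs such as $\{a,c\}$ need not be edges): the two-vertex case, where the right-hand side is the single entry, giving $Z_{G\backslash\{u,v\}}/Z_G = \pm K_n^{-1}(u,v)$; and the four-vertex case $V=\{a,b,c,d\}$, where it is the $4\times 4$ Pfaffian $K_n^{-1}(a,b)K_n^{-1}(c,d)-K_n^{-1}(a,c)K_n^{-1}(b,d)+K_n^{-1}(a,d)K_n^{-1}(b,c)$.

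Next I would divide the claimed identity through by $Z_G^2$, turning it into
\[
\frac{Z_{G\backslash\{a,b,c,d\}}}{Z_G} + \frac{Z_{G\backslash\{a,c\}}}{Z_G}\,\frac{Z_{G\backslash\{b,d\}}}{Z_G}
= \frac{Z_{G\backslash\{a,b\}}}{Z_G}\,\frac{Z_{G\backslash\{c,d\}}}{Z_G} + \frac{Z_{G\backslash\{a,d\}}}{Z_G}\,\frac{Z_{G\backslash\{b,c\}}}{Z_G},
\]
and substitute the expressions from the previous step. The four-vertex term supplies exactly the three Pfaffian monomials; the $K_n^{-1}(a,c)K_n^{-1}(b,d)$ contributions cancel against the second term on the left, and what survives on each side is $K_n^{-1}(a,b)K_n^{-1}(c,d)+K_n^{-1}(a,d)K_n^{-1}(b,c)$. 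In other words, once every ratio is rewritten through $K_n^{-1}$, \cref{prop:Kuo} is nothing more than the defining cofactor expansion of a $4\times 4$ Pfaffian, so the identity holds as a tautology.

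The one genuine point to check --- and the step I expect to be the main obstacle --- is the bookkeeping of the Kasteleyn signs $\pm$ in each instance $Z_{G\backslash V}/Z_G=\pm\mathrm{Pf}(\cdots)$. For the cancellation above to reproduce the Pfaffian expansion one needs the sign attached to the quadruple removal to factor compatibly with those attached to the pair removals, i.e. $\epsilon_{ab}\epsilon_{cd}=\epsilon_{ac}\epsilon_{bd}=\epsilon_{ad}\epsilon_{bc}=\epsilon_{abcd}$. This is precisely where the hypothesis that $a,b,c,d$ occur in this cyclic order on a single face of the plane graph $G$ is used: it guarantees that the index order of the four vertices in the antisymmetric matrix $K_n$ is consistent with their planar cyclic order, so that the four signs coincide and the naive expansion is the correct one. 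I would confirm this compatibility directly from the Kasteleyn orientation fixed in \cref{fig:rectanglecoords}, after which the proposition follows immediately as asserted.
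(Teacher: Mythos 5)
Your overall strategy---multiply the formula of \cref{thm:localstats} by $Z_G$, rewrite each ratio $Z_{G\backslash U}/Z_G$ as a Pfaffian in the entries of the inverse Kasteleyn matrix, and observe that the claimed identity is then the expansion of a $4\times 4$ Pfaffian---is exactly the derivation the paper itself gestures at: the paper introduces \cref{prop:Kuo} by saying it ``follows immediately from \cref{thm:localstats} by multiplying both sides of the equation in \cref{thm:localstats} by $Z_G$,'' deferring the full result to \cite{Kuo06}. So in spirit you and the paper agree. The problem is that your execution has a genuine gap, and it is not located where you think it is.

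The gap is in your very first step: passing from edges to ``arbitrary even vertex sets via the Pfaffian analogue of Jacobi's minor identity.'' Jacobi's identity only gives $\mathrm{Pf}\bigl((K^{-1})|_U\bigr)=\pm\,\mathrm{Pf}\bigl(K|_{V\backslash U}\bigr)/\mathrm{Pf}(K)$; it does \emph{not} give $\bigl|\mathrm{Pf}\bigl(K|_{V\backslash U}\bigr)\bigr|=Z_{G\backslash U}$. The submatrix $K|_{V\backslash U}$ is the signed adjacency matrix of $G\backslash U$ equipped with the orientation inherited from $G$, and after deleting vertices this orientation is in general no longer a Kasteleyn orientation: distinct matchings of $G\backslash U$ can contribute opposite signs, and the Pfaffian then undercounts. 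Indeed, for two non-adjacent vertices $u,v$ in the bulk the identity $Z_{G\backslash\{u,v\}}/Z_G=\pm K^{-1}(u,v)$ is simply false---this is the classical monomer--monomer correlation problem. The hypothesis that $a,b,c,d$ lie on a single face is therefore needed already for the \emph{magnitude} of your two- and four-vertex formulas, not merely for sign bookkeeping: taking that face to be the outer face, a vertex incident to the unbounded face can never be enclosed by a cycle of $G$, so no superposition cycle of two matchings of $G\backslash U$ encloses a vertex of $U$; every such cycle then encloses an even number of vertices of $G$ and retains the odd Kasteleyn parity, whence all matchings of $G\backslash U$ carry the same sign in $\mathrm{Pf}\bigl(K|_{V\backslash U}\bigr)$. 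Your proposal never states or proves this lemma, yet both instances you ``record'' depend on it.

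Separately, you have correctly isolated the relative-sign condition $\epsilon_{ab}\epsilon_{cd}=\epsilon_{ac}\epsilon_{bd}=\epsilon_{ad}\epsilon_{bc}=\epsilon_{abcd}$ as what makes the crossing pair $\{a,c\},\{b,d\}$ land on the other side of the identity, but you do not prove it. Saying that the cyclic order makes ``the index order of the four vertices consistent with their planar cyclic order'' is not an argument, and your plan to confirm the compatibility ``directly from the Kasteleyn orientation fixed in \cref{fig:rectanglecoords}'' cannot work: \cref{prop:Kuo} concerns an arbitrary plane graph $G$, not the particular graph $G_n$, so no inspection of one specific orientation can establish it. This sign analysis (or, alternatively, Kuo's original combinatorial proof by superposing two matchings and rearranging the paths joining $a,b,c,d$, where the cyclic order governs which pairs can be connected by non-crossing paths) is the real content of the proposition and must be supplied for a general plane graph before your Pfaffian-expansion argument closes.
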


Recall that $Z_n^{\{v_1,\dots,v_m\}}$ is the partition function of the dimer model on $G_n$ with the vertices $\{v_1,\dots,v_m \}$ removed.
Introduce for $0 \leq i \leq  n-1$,
\begin{equation}
	T_n(i) = |K^{-1}_n((2i,2n+1),\mathbf{b})| 
	= \frac{Z_n^{\{(2i,2n+1),\mathbf{b}\}}}{Z_n},
\end{equation}
and for $0 \leq i < j \leq n-1$,
\begin{equation}
\label{defR}
	R_n(i,j) = |K^{-1}_n((2i,2n+1),(2j,2n+1))|
	= \frac{Z_n^{\{(2i,2n+1),(2j,2n+1)\}}}{Z_n},
\end{equation}
with $R_n(i,j)=-R_n(j,i)$ if $1\leq j<i\leq n-1$ and $R_n(i,i)=0$ for $0 \leq i \leq n-1$. We use the convention that $T_n(k)=0$ for $k\geq n$, $R_0=1$ and that $R_n(k,l)=0$ if $k \geq n$ or $l \geq n$.

\begin{lem}\label{lem:condensation}
For $1 \leq i,j\leq n-1$, we have
	\begin{equation}
		R_n(i,j)=R_{n-1}(i-1,j-1)+T_n(i)T_{n-1}(j-1)-T_{n-1}(i-1)T_n(j).
	\end{equation}
\end{lem}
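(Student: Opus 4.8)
The plan is to deduce the recursion from a single application of graphical condensation (\cref{prop:Kuo}) to $G_n$, with the four distinguished vertices placed on the outer face near the top-right corner. Because $R_n$ and the $T$'s are defined simultaneously as entries of $K_n^{-1}$ and as ratios of partition functions, I would work throughout with the partition-function forms $R_n(i,j)=Z_n^{\{(2i,2n+1),(2j,2n+1)\}}/Z_n$ and $T_n(i)=Z_n^{\{(2i,2n+1),\mathbf{b}\}}/Z_n$, which are non-negative; this sidesteps all sign bookkeeping, and the cases $i=j$ and $i>j$ then follow from $R_n(i,i)=0$ and the antisymmetry $R_n(i,j)=-R_n(j,i)$. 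So assume $1\le i<j\le n-1$.

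Take $a=(2i,2n+1)$, $b=(2j,2n+1)$, $c=\mathbf{b}$, and let $d$ be the boundary vertex just past $\mathbf{b}$ along the right/diagonal side, chosen so that deleting $d$ initiates the peeling described below; reading the outer boundary clockwise, these occur in the cyclic order $a,b,c,d$, so \cref{prop:Kuo} gives
\begin{equation*}
Z_n\,Z_n^{\{a,b,c,d\}}+Z_n^{\{a,c\}}Z_n^{\{b,d\}}=Z_n^{\{a,b\}}Z_n^{\{c,d\}}+Z_n^{\{a,d\}}Z_n^{\{b,c\}}.
\end{equation*}
Three of the six deletions are read off directly from the definitions: $Z_n^{\{a,b\}}=Z_nR_n(i,j)$, $Z_n^{\{a,c\}}=Z_nT_n(i)$, and $Z_n^{\{b,c\}}=Z_nT_n(j)$.

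The substance of the proof — and the step I expect to be the main obstacle — is the evaluation of the three deletions involving $d$ by forced-matching (peeling) arguments built on \cref{rem:pendant}. The mechanism should be that deleting $d$ forces a unique matching of the entire outermost layer of $G_n$ (its top two rows together with the corner), collapsing $G_n$ onto its inner triangle, which is a copy of $G_{n-1}$. Tracking the cascade should give: (i) $Z_n^{\{c,d\}}=Z_{n-1}$; (ii) deleting a single top vertex $(2k,2n+1)$ together with $d$ propagates its hole diagonally inward to $(2k-2,2n-1)$, while the retained corner $\mathbf{b}$ forces a second hole at $\mathbf{b}_{n-1}$, so that $Z_n^{\{(2k,2n+1),d\}}=Z_{n-1}^{\{(2k-2,2n-1),\mathbf{b}_{n-1}\}}=Z_{n-1}T_{n-1}(k-1)$ for $k\in\{i,j\}$; and (iii) deleting all four vertices also removes $\mathbf{b}$, which cancels the $\mathbf{b}_{n-1}$ hole and leaves precisely the two inward-shifted holes, so $Z_n^{\{a,b,c,d\}}=Z_{n-1}^{\{(2i-2,2n-1),(2j-2,2n-1)\}}$, the two-point function with both indices lowered. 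Making each of these rigorous means proving that the relevant matchings are genuinely forced (uniqueness of the peeling) and carrying out the two parities of $n$ separately, since the locations of $\mathbf{b}$, of $\mathbf{b}_{n-1}$, and of the deleted corner vertex all depend on $[n]_2$; this is where the local geometry of the top-right corner and the precise choice of $d$ must be fixed.

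Granting the reductions, substitution into the condensation identity makes every term share the common factor $Z_nZ_{n-1}$; dividing it out leaves
\begin{equation*}
R_{n-1}(i-1,j-1)+T_n(i)T_{n-1}(j-1)=R_n(i,j)+T_{n-1}(i-1)T_n(j),
\end{equation*}
which rearranges to the asserted recursion, the lowered two-point term from (iii) supplying the $R_n(i-1,j-1)$ summand. Confirming the exact size of this residual two-point term — that is, settling the step-(iii) bookkeeping that also produces the shift $k\mapsto k-1$ in the $T$-factors — is the one remaining point that must be checked carefully. The boundary cases $i=j$ and $i>j$ are then recovered from $R(i,i)=0$ and antisymmetry, completing the argument.
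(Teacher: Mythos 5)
Your algebraic skeleton is exactly the right one---\cref{prop:Kuo} applied to $\mathbf{i}=(2i,2n+1)$, $\mathbf{j}=(2j,2n+1)$, $\mathbf{b}$ and one further boundary vertex, reduction of the terms containing that vertex to size-$(n-1)$ partition functions, then division by $Z_nZ_{n-1}$---and the final displayed identity is the correct one. The genuine gap is in the one step you yourself flag as the main obstacle: your choice of the fourth vertex $d$, and the peeling you attribute to it, are both wrong, and no vertex near the top-right corner can do the job. Concretely, if $n$ is even then $\mathbf{b}=(2n,2n+1)$ is a pendant vertex whose unique neighbor is $(2n,2n)$; taking $d=(2n,2n)$ gives $Z_n^{\{a,d\}}=Z_n^{\{b,d\}}=0$ (deleting $d$ isolates $\mathbf{b}$), while by \cref{rem:pendant} $Z_n^{\{c,d\}}=Z_n$ and $Z_n^{\{a,b,c,d\}}=Z_n^{\{a,b\}}$, so the condensation identity collapses to the tautology $Z_nZ_n^{\{a,b\}}=Z_n^{\{a,b\}}Z_n$ and yields nothing. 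Taking instead the diagonal vertex $d=(2n-1,2n-1)$ starts no cascade (for $n$ even all of its neighbors keep degree at least two after its removal, and for $n$ odd the forcing dies out after two dimers), so none of your reductions (i)--(iii) hold. More fundamentally, the layer you propose to peel is the wrong one: removing the top two rows of $G_n$ leaves the triangle $\{0\le x_1\le 2n-1,\ x_1\le x_2\le 2n-1\}$, which has $2n$ columns and is not a copy of $G_{n-1}$. The copy of $G_{n-1}$ inside $G_n$ occupies columns $2,\dots,2n$, i.e.\ it is the $(2,2)$-translate of $G_{n-1}$, so the layer that must be peeled consists of the two \emph{leftmost columns}.

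Accordingly the fourth vertex must be the top-\emph{left} corner $\mathbf{a}=(0,2n+1)$, which is what the paper uses (the cyclic order $\mathbf{a},\mathbf{i},\mathbf{j},\mathbf{b}$ on the outer face is still the one required by \cref{prop:Kuo}). Deleting $\mathbf{a}$ makes $(0,2n)$ and $(1,2n+1)$ pendant, and the forcing cascades down columns $0$ and $1$: the edges $((0,1+2k),(0,2+2k))$, $((1,2+2k),(1,3+2k))$ for $0\le k\le n-1$ and finally $((0,0),(1,1))$ are all matched. What remains is the diagonally shifted copy of $G_{n-1}$ with its $\mathbf{b}$-vertex removed (up to a pendant-pair reduction), and this single peeling delivers precisely your (i)--(iii): $Z_n^{\{\mathbf{a},\mathbf{b}\}}=Z_{n-1}$, $Z_n^{\{(2k,2n+1),\mathbf{a}\}}=Z_{n-1}T_{n-1}(k-1)$, and $Z_n^{\{\mathbf{a},\mathbf{i},\mathbf{j},\mathbf{b}\}}=Z_{n-1}R_{n-1}(i-1,j-1)$, the index shifts coming from the relabelling $(2k,2n+1)\mapsto(2(k-1),2(n-1)+1)$. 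One further point you should not gloss over: what your computation (and the corrected one) produces is $R_{n-1}(i-1,j-1)$, not the $R_n(i-1,j-1)$ written in the statement of \cref{lem:condensation}; the subscript $n$ there is a typo, as one sees from the form of the recurrence actually invoked in the proof of \cref{lem:Req}, so the identification should be stated explicitly rather than made silently.
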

\begin{proof}
	For the purpose of the proof, write $\mathbf{i}=(2i,2n+1)$ for $1 \leq i \leq n-1$ and let $\mathbf{a}=(0,2n+1)$. 
	 For $i<j$, we apply graphical condensation in~\cref{prop:Kuo}\footnote{Here the face in~\cref{prop:Kuo} is the boundary face.} which gives
	\begin{equation}\label{eq:Kuo}
		Z_n^{\{\mathbf{a},\mathbf{i},\mathbf{j},\mathbf{b}\}}Z_n = Z_n^{\{\mathbf{a},\mathbf{i}\}}Z_n^{\{\mathbf{j},\mathbf{b}\}}-Z_n^{\{\mathbf{a},\mathbf{j}\}}Z_n^{\{\mathbf{i},\mathbf{b}\}}+Z_n^{\{\mathbf{a},\mathbf{b}\}}Z_n^{\{\mathbf{i},\mathbf{j}\}}.
	\end{equation}
	 since $\mathbf{a},\mathbf{i},\mathbf{j},\mathbf{b}$ are in cyclic order on the face of TSSCPP.  Notice that removing $\mathbf{a}$ from $\mathtt{V}_n$ transforms the graph to $\mathtt{V}_{n-1}\backslash \{\mathbf{b} \}$ 
since the edges $((0,0),(1,1))$, $((0,1+2k),(0,2+2k))$, $((1,2+2k),(1,3+2k))$ for $0 \leq k \leq n-1$ must be matched by \cref{rem:pendant}; see \cref{fig:Kuo}. 
	
\begin{figure}[htbp!]
\begin{center}
	\includegraphics[height=5cm]{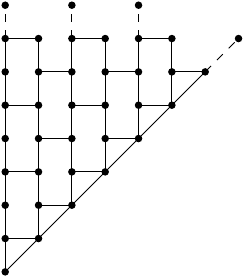}
	\hspace{5mm}
	\includegraphics[height=5cm]{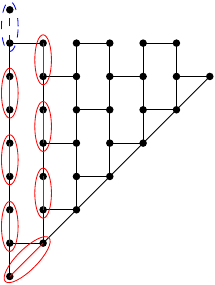}
\caption{The left figure shows adding pendant vertices to $\mathbf{a}=(0,7)$, $\mathbf{1}=(2,7)$, $\mathbf{2}=(4,7)$ and $\mathbf{b}$ from  $\mathtt{V}_3$ (here $n=3$) for considering $Z_3^{\{\mathbf{a},\mathbf{1},\mathbf{2},\mathbf{b}\}}$. These pendant edges are drawn in dashed.  Note that removing a vertex is the equivalent of adding a pendant edge to that vertex.
The right figure shows that adding a pendant vertex to $\mathbf{a}$ forces the blue dashed dimer as well as the red dimers. }
\label{fig:Kuo}
\end{center}
\end{figure}

	Hence, we have $Z_n^{\{\mathbf{a},\mathbf{i},\mathbf{j},\mathbf{b}\}}=Z_{n-1}^{\{\mathbf{i -1},\mathbf{j -1 }\}}$, $Z_n^{\{\mathbf{a},\mathbf{i}\}}=Z_{n-1}^{\{\mathbf{i-1},\mathbf{b}\}}$ and $Z_n^{\{\mathbf{a},\mathbf{b}\}}=Z_{n-1}$.  This means that the above equation is reduced to
\begin{equation}
	Z_{n-1}^{\{\mathbf{i-1},\mathbf{j-1}\}}Z_n = Z_{n-1}^{\{\mathbf{i-1},\mathbf{b}\}}Z_n^{\{\mathbf{j},\mathbf{b}\}}-Z_{n-1}^{\{\mathbf{j-1},\mathbf{b}\}}Z_n^{\{\mathbf{i},\mathbf{b}\}}+Z_{n-1}Z_n^{\{\mathbf{i},\mathbf{j}\}}.
	\end{equation}
	We divide the above equation by $Z_{n-1} Z_n$ and use the definitions of $R_n$ and $T_n$ to obtain the equation. A similar computation holds for $i>j$.  
\end{proof}

\begin{lem}\label{lem:Tn}
	For $0\leq i \leq n-1$, we have	$T_n(i)=(-1)^i p(n,i,0)$
	and $\sgn \, K^{-1}_n((2i,2n+1),\mathbf{b})=(-1)^n$. 
\end{lem}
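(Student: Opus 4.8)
The plan is to specialize \cref{thm:vertexb} to the top-boundary vertex $(2i,2n+1)$ and evaluate the resulting sum explicitly. First I would observe that $(2i,2n+1)$ is of the form $(i_1,i_1+2i_2+1)$ with $i_1=2i$ and $i_2=n-i$, which sits exactly at the top of the allowed range $0\le i_2\le n-\lceil i_1/2\rceil$; in particular this vertex has odd parity, so the first branch \eqref{Knb1} applies and gives
\[
K_n^{-1}((2i,2n+1),\mathbf{b})=\sum_{l=0}^{2i}(-1)^{n-i+l}\binom{n-i-1+l}{l}\,h_n^{1,\mathbf{b}}(2i-l).
\]

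Next I would substitute the contour-integral definition \eqref{hnb1} of $h_n^{1,\mathbf{b}}$, interchange the finite sums over $l$ and $k$, and move the $l$-sum inside the $r$-integral. Writing $r^{-(2i-l-2k+1)}=r^{l}\,r^{-(2i-2k+1)}$ concentrates the $l$-dependence in the polynomial $\sum_{l=0}^{2i}(-1)^l\binom{n-i-1+l}{l}r^l$. The key analytic point is that this truncated sum may be replaced by the full generating function $\sum_{l\ge0}(-1)^l\binom{n-i-1+l}{l}r^l=(1+r)^{-(n-i)}$: each tail term $l\ge 2i+1$, after multiplication by the polynomial $(1+r)^{n-k}$ and division by $r^{2i-2k+1}$, contributes only powers $r^{m}$ with $m\ge 2k\ge 0$, hence no residue at the origin.

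With this replacement the integrand collapses to $(1+r)^{i-k}\,r^{-(2i-2k+1)}$, and \eqref{integral1} evaluates its residue as $\binom{i-k}{2i-2k}=\delta_{k,i}$ (the binomial vanishes for $k\ne i$, since the lower index is then either larger than the upper index or negative). Only the term $k=i$ survives, yielding
\[
K_n^{-1}((2i,2n+1),\mathbf{b})=(-1)^{n-i}p(n,i,0).
\]
Finally I would read off the two claims: stripping the factor $(-1)^i$ from the definition \eqref{eq:pnkl} of $p(n,i,0)$ leaves the manifestly positive quantity $\frac{(n+i+1)!(2n-i+1)!}{i!(3n-i+2)!}(3n-3i+2)C_{n-i}$ (positive because $3n-3i+2>0$ and $C_{n-i}>0$ for $i\le n-1$). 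Hence $|K_n^{-1}((2i,2n+1),\mathbf{b})|=(-1)^i p(n,i,0)=T_n(i)$, while the overall sign is $(-1)^{n-i}(-1)^i=(-1)^n$, exactly as stated.

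The only genuinely delicate step is the justification that the truncated binomial sum can be replaced by its full generating function inside the contour integral; everything else is routine residue extraction and sign bookkeeping.
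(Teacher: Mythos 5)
Your proposal is correct and follows essentially the same route as the paper's own proof: specialize \eqref{Knb1} with $i_1=2i$, $i_2=n-i$, substitute \eqref{hnb1}, replace the truncated binomial sum by the full generating function $(1+r)^{-(n-i)}$ (the paper justifies this by noting the remainder is a ${}_2F_1$ term with no singularity at $r=0$, which is the same power-counting observation you make directly), and extract the residue $\binom{i-k}{2i-2k}=\delta_{k,i}$ via \eqref{integral1}. The sign bookkeeping at the end also matches the paper's.
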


\begin{proof}	
	Notice that $(2i,2n+1)=(2i,2(n-i)+2i+1)$.	
	The lemma follows from using the formula for $K^{-1}_n((2i,2n+1),\mathbf{b})$ given in~\eqref{Knb1} and computing explicitly as follows:
	\begin{equation}
		\begin{split}
			K^{-1}_n((2i,2n+1),\mathbf{b})&= \sum_{\ell=0}^{2i} (-1)^{n-i+l} \binom{n-i+\ell-1}{\ell}h_n^{1,\mathbf{b}}(2i-l) \\ 
			&=\sum_{k=0}^n  \frac{p(n,k,0)}{2\pi \mathrm{i}} \int_{\Gamma_0} \text{d}r \sum_{\ell=0}^{2i}(-1)^{n-i+\ell} \binom{n-i+\ell-1}{\ell} \frac{(1+r)^{n-k}}{r^{2i-2k-\ell+1}} \\
			&=(-1)^{n-i}\sum_{k=0}^n p(n,k,0) \frac{1}{2\pi \mathrm{i}} \int_{\Gamma_0} \text{d}r  \frac{(1+r)^{n-k+i-n}}{r^{2i-2k+1}} \\
			&=(-1)^{n-i} p(n,i,0),
		\end{split}
	\end{equation}
where the third equality uses the fact that the additional term proportional to $\pFq{2}{1}{1,1+i+n}{2+2i}{-r}$ does not have a singularity in $r$
and the fourth equality follows from \eqref{integral1}.  Since $\sgn \, p(n,i,0)=(-1)^i$, the result follows.  
\end{proof}

\begin{lem} \label{lem:signs}
For $0\leq i<j\leq n-1$, $\sgn \, K_n^{-1}((2i,2n+1),(2j,2n+1))=1$.   
\end{lem}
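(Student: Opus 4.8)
The plan is to prove the equivalent, sharper statement that $K_n^{-1}((2i,2n+1),(2j,2n+1)) = R_n(i,j)\ge 0$ for $0\le i<j\le n-1$. Indeed, by the definition \eqref{defR} one has $R_n(i,j)=|K_n^{-1}((2i,2n+1),(2j,2n+1))| = Z_n^{\{(2i,2n+1),(2j,2n+1)\}}/Z_n$, which is manifestly non-negative; so the entire content of the lemma is to pin down the Kasteleyn sign relating the signed inverse entry to this partition-function ratio. There is nothing to analyze about magnitudes, only about signs, and the sign statement is exactly the assertion that the signed entry is non-negative.

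First I would set up the induction that iterates \cref{lem:condensation}. That recursion carries the pair $(i,j)$ to $(i-1,j-1)$, so it preserves the difference $j-i$ and terminates at the boundary case $i=0$, i.e. at the vertex $\mathbf{a}=(0,2n+1)$. For this base case I would reuse the forcing already exploited in the proof of \cref{lem:condensation}: removing $\mathbf{a}$ from $\mathtt{V}_n$ forces the pendant edges listed there (via \cref{rem:pendant}) and identifies the remaining graph with $\mathtt{V}_{n-1}\backslash\{\mathbf{b}\}$, under which $(2m,2n+1)$ is carried to $(2(m-1),2(n-1)+1)$. Since forced edges factor out of a Pfaffian as a product of their (all $+1$) weights times orientation signs, the entry $K_n^{-1}((0,2n+1),(2m,2n+1))$ has the same sign as $K_{n-1}^{-1}((2(m-1),2(n-1)+1),\mathbf{b})$ up to the product of Kasteleyn orientation signs over the forced edges. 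The sign of the latter is $(-1)^{n-1}$ by \cref{lem:Tn} applied at level $n-1$, so tracking the forcing sign should give $\sgn K_n^{-1}((0,2n+1),(2m,2n+1))=1$.

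For the inductive step I would re-derive the condensation identity directly at the level of $K_n^{-1}$ rather than at the level of partition functions. Applying the Pfaffian form of the local-statistics identity in \cref{thm:localstats} (equivalently, the $4\times 4$ Pfaffian minor relation underlying \cref{prop:Kuo}) to the four boundary vertices $\mathbf{a}=(0,2n+1)$, $(2i,2n+1)$, $(2j,2n+1)$, $\mathbf{b}$, which occur in this cyclic order on the outer face, produces a signed three-term identity among the entries $K_n^{-1}(\mathbf{a},\cdot)$, $K_n^{-1}(\cdot,\mathbf{b})$ and $K_n^{-1}((2i,2n+1),(2j,2n+1))$. Substituting the signs $K_n^{-1}((2i,2n+1),\mathbf{b})=(-1)^n T_n(i)$ with $T_n(i)\ge 0$ from \cref{lem:Tn}, together with the base-case signs of the $\mathbf{a}$-column, turns this into a signed recurrence for $K_n^{-1}((2i,2n+1),(2j,2n+1))$. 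Comparing its absolute value with the magnitude recurrence of \cref{lem:condensation} should then show that the two coincide, so that by the inductive hypothesis $K_n^{-1}((2i,2n+1),(2j,2n+1))=R_n(i,j)\ge 0$, which closes the induction.

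The hard part will be the sign bookkeeping, which is concentrated in two places: (i) showing that the product of Kasteleyn orientation signs over the edges forced by deleting $\mathbf{a}$ equals $(-1)^{n-1}$, so that the base case indeed comes out non-negative; and (ii) verifying that the signed $4\times 4$ Pfaffian identity reduces to the magnitude recurrence of \cref{lem:condensation} with precisely its displayed signs, i.e. that matching absolute values leaves no cancellation-induced sign ambiguity. Both points are purely about the chosen Pfaffian orientation on $G_n$, and since all edge weights are $+1$ they amount to computing a parity; once these parities are settled the induction is immediate.
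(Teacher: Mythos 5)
Your overall strategy (induct along the condensation recurrence of \cref{lem:condensation}, anchor at $\mathbf{a}=(0,2n+1)$, feed in the signs from \cref{lem:Tn}) is not circular, but it has a genuine gap: the two ``parity'' computations you defer to the end are the entire content of the lemma, and the inductive step as you describe it would not close even if the base case were granted. Concretely, since $\mathbf{a},\mathbf{i},\mathbf{j},\mathbf{b}$ are pairwise non-adjacent, \cref{thm:localstats} does not apply to this quadruple; you must instead invoke the general Jacobi/Pfaffian-minor identity, and that identity carries its own undetermined global sign $\epsilon=\pm1$ in $\mathrm{Pf}\bigl(K_n^{-1}|_{\{\mathbf{a},\mathbf{i},\mathbf{j},\mathbf{b}\}}\bigr)=\epsilon\, Z_n^{\{\mathbf{a},\mathbf{i},\mathbf{j},\mathbf{b}\}}/Z_n$, coming from the vertex ordering used to define the Pfaffian (which the paper never fixes) and from the common sign of matchings in the punctured sub-Pfaffian. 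Writing $K_n^{-1}(\mathbf{i},\mathbf{j})=s\,R_n(i,j)$ with $s=\pm1$ unknown, substituting the signs from \cref{lem:Tn} and the base case, expanding the $4\times4$ Pfaffian and eliminating with Kuo's identity (\cref{prop:Kuo}), everything collapses to the single relation
\begin{equation*}
\bigl(\epsilon-(-1)^n\bigr)\,Z_n^{\{\mathbf{a},\mathbf{i},\mathbf{j},\mathbf{b}\}}\,Z_n
\;=\;(s-1)\,(-1)^n\,Z_n^{\{\mathbf{a},\mathbf{b}\}}\,Z_n^{\{\mathbf{i},\mathbf{j}\}},
\end{equation*}
one equation in the two unknown signs $\epsilon,s$. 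This does \emph{not} force $s=1$: the branch $\epsilon=-(-1)^n$, $s=-1$ is consistent with all available magnitude information whenever $Z_n^{\{\mathbf{a},\mathbf{i},\mathbf{j},\mathbf{b}\}}Z_n=Z_n^{\{\mathbf{a},\mathbf{b}\}}Z_n^{\{\mathbf{i},\mathbf{j}\}}$ (note $Z_n^{\{\mathbf{a},\mathbf{b}\}}=Z_{n-1}>0$, so this is a nontrivial coincidence, not an absurdity). So ``comparing absolute values with \cref{lem:condensation}'' cannot determine the sign; you would have to compute $\epsilon$ independently, which is a global parity of exactly the kind you set out to find. The base case has the same defect: the sign of the forced-edge Pfaffian factorization is asserted (``tracking the forcing sign should give\dots'') rather than computed, and asserting it is precisely assuming the $i=0$ case of the lemma.

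The missing idea --- and the paper's actual proof --- is a local trick that makes all of this bookkeeping unnecessary. Since $(2i,2n+1)$ and $(2j,2n+1)$ both lie on the outer face, one can add an auxiliary edge directed from $(2j,2n+1)$ to $(2i,2n+1)$; the new face created still has an odd number of counterclockwise edges, so the modified matrix $\tilde K_n$ is again a valid Kasteleyn matrix. Now \cref{thm:localstats} \emph{does} apply, because the pair is an edge of the modified graph, and non-negativity of the probability of that edge, together with antisymmetry, gives $\tilde K_n^{-1}((2i,2n+1),(2j,2n+1))\ge 0$; deleting both rows and columns removes the auxiliary entries, so this entry has the same sign as $K_n^{-1}((2i,2n+1),(2j,2n+1))$. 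The probabilistic interpretation packages every ordering and orientation parity consistently, with no induction and no base case. Your route could in principle be rescued by fixing a vertex ordering once and for all and honestly carrying out both parity computations, but that is substantially more work than the lemma deserves and is exactly what the auxiliary-edge argument is designed to avoid.
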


\begin{proof}
	Introduce an auxiliary edge directed from $(2j,2n+1)$ to $(2i,2n+1)$ for $i<j$. This introduces a new face, but by this choice, the number of counterclockwise edges on the graph remains odd and so is a valid Kasteleyn orientation.  Let $\tilde{K}_n$ denote the modified Kasteleyn matrix with this additional edge. Then, by \cref{thm:localstats} and the antisymmetry of $K_n$
	\begin{equation}\begin{split}
	0\leq	&\mathbb{P}[\mbox{Edge $((2i,2n+1),(2j,2n+1))$ in the modified graph}]\\&= \tilde{K}_n((2i,2n+1),(2j,2n+1)) \tilde{K}_n^{-1}((2j,2n+1),(2i,2n+1))\\&=-\tilde{K}_n^{-1}((2j,2n+1),(2i,2n+1))=\tilde{K}_n^{-1}((2i,2n+1),(2j,2n+1)).
	\end{split}
	\end{equation}
Since we have $\tilde{K}_n^{-1}((2i,2n+1),(2j,2n+1))={K}_n^{-1}((2i,2n+1),(2j,2n+1))$, the result follows. 
	\end{proof}

To get an explicit expression for $R_n(i,j)$ we use generating functions and require some further notation. Introduce 
\begin{equation}
		\mathbf{R}(u,v,w)= \sum_{n=0}^\infty \sum_{i=0}^n \sum_{j=0}^n R_n(i,j) u^i v^j w^n,
	\end{equation}
	and
	\begin{equation}
		\mathbf{T}_n(u) = \sum_{i=0}^{\infty} T_n(i)u^i.
	\end{equation}

\begin{lem}\label{lem:Req}
For $0 \leq i,j\leq n-1$ and $n\geq 1$,  
\begin{equation}
\begin{split}
R_n(i,j) &= \frac{1}{(2\pi \mathrm{i})^3} 
\int_{\Gamma_0} \frac{\text{d}u}{u}
\int_{\Gamma_0} \frac{\text{d}v}{v}
\int_{\Gamma_0}\frac{\text{d}w}{w}
\frac{1}{1-uwv} \frac{1}{u^iv^jw^n}	\\
		&\times\Bigg( \sum_{n=1}^\infty \mathbf{T}_n(u) \mathbf{T}_{n-1}(v) v w^n - \mathbf{T}_{n-1}(u) \mathbf{T}_{n}(v) u w^n \Bigg).
	\end{split}	
\end{equation}
\end{lem}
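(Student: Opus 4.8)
The plan is to convert the condensation recurrence of \cref{lem:condensation} into a single functional equation for the generating function $\mathbf{R}(u,v,w)$ and then read off $R_n(i,j)$ as a coefficient. The essential feature to exploit is that the recurrence expresses $R_n(i,j)$ in terms of the \emph{simultaneously decremented} term $R_{n-1}(i-1,j-1)$ together with the bilinear source $G_n(i,j):=T_n(i)T_{n-1}(j-1)-T_{n-1}(i-1)T_n(j)$; it is precisely this joint shift $(i,j,n)\mapsto(i-1,j-1,n-1)$ that will produce the kernel $1/(1-uvw)$, since that shift corresponds to multiplication by $uvw$ at the level of generating functions.

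First I would package the source in generating-function form. Using $\sum_{i\ge 0}T_{n-1}(i-1)u^i=u\,\mathbf{T}_{n-1}(u)$, one gets $\sum_{i,j\ge 0}G_n(i,j)u^iv^j=v\,\mathbf{T}_n(u)\mathbf{T}_{n-1}(v)-u\,\mathbf{T}_{n-1}(u)\mathbf{T}_n(v)$, so that weighting by $w^n$ and summing over $n\ge 1$ reproduces exactly the bracketed series appearing in the statement; call it $F(u,v,w)$. Because $T_n(i)$ is supported on $0\le i\le n-1$ (\cref{lem:Tn}), the array $G_n(i,j)$ is automatically supported on $0\le i,j\le n-1$, so nothing is lost in the $(i,j)$-range when passing to $F$.

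The key step is to check that the recurrence in fact holds on the \emph{whole} index range $0\le i,j\le n-1$, not only the interior. On the edge $i=0$ one has $R_n(0,j)=T_n(0)T_{n-1}(j-1)$, which equals $G_n(0,j)$ since the second term of $G_n$ drops out as $T_{n-1}(-1)=0$; this boundary value follows from the reductions $Z_n^{\{\mathbf{a},\mathbf{j}\}}=Z_{n-1}^{\{\mathbf{j-1},\mathbf{b}\}}$ and $Z_n^{\{\mathbf{a},\mathbf{b}\}}=Z_{n-1}$ recorded in the proof of \cref{lem:condensation}. The edge $j=0$ is then handled by antisymmetry $R_n(i,j)=-R_n(j,i)$, and the diagonal is consistent because $G_n(i,i)=0=R_n(i,i)$. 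Adopting the conventions that $R_m$ and $T_m$ vanish outside their natural ranges, the identity $R_n(i,j)=R_{n-1}(i-1,j-1)+G_n(i,j)$ therefore holds for all admissible $(i,j,n)$. Multiplying by $u^iv^jw^n$ and summing, the term $R_{n-1}(i-1,j-1)$ reindexes to $uvw\,\mathbf{R}(u,v,w)$ and the source sums to $F(u,v,w)$, yielding the clean equation $(1-uvw)\,\mathbf{R}(u,v,w)=F(u,v,w)$. Extracting the coefficient of $u^iv^jw^n$ from $\mathbf{R}=F/(1-uvw)$ via the Cauchy integral $\tfrac{1}{(2\pi\mathrm{i})^3}\int_{\Gamma_0}\tfrac{\mathrm{d}u}{u}\int_{\Gamma_0}\tfrac{\mathrm{d}v}{v}\int_{\Gamma_0}\tfrac{\mathrm{d}w}{w}\,u^{-i}v^{-j}w^{-n}$ then gives the stated formula, the small contours guaranteeing $|uvw|<1$ so that $1/(1-uvw)=\sum_{k\ge 0}(uvw)^k$ converges. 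As a free consistency check, $F(v,u,w)=-F(u,v,w)$ while $1/(1-uvw)$ is symmetric, so the formula is manifestly antisymmetric under $(i,u)\leftrightarrow(j,v)$, recovering $R_n(i,j)=-R_n(j,i)$ and $R_n(i,i)=0$.

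The hard part will be the boundary bookkeeping of the third step: verifying that the recurrence genuinely extends to the lower edges and the diagonal so that the naive summation produces \emph{no} inhomogeneous correction terms. This hinges on the explicit boundary value $R_n(0,j)=T_n(0)T_{n-1}(j-1)$ and on the support of $T_n$; were these to fail, the functional equation would pick up boundary contributions and the denominator would not collapse cleanly to $1-uvw$. Everything else is routine reindexing and coefficient extraction.
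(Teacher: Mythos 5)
Your proof is correct and follows essentially the same route as the paper's: turn the condensation recurrence into the functional equation $(1-uvw)\,\mathbf{R}(u,v,w)=F(u,v,w)$, where $F$ is the bracketed series, and then extract the coefficient of $u^iv^jw^n$ by contour integration. If anything, your boundary bookkeeping is more careful than the paper's, which writes $R_n(0,i)=T_{n-1}(i-1)$ (omitting the factor $T_n(0)=Z_{n-1}/Z_n$ that you correctly include via $R_n(0,j)=T_n(0)T_{n-1}(j-1)$, so that the source collapses cleanly with no inhomogeneous term) and consequently carries a spurious additive constant $1$ in its functional equation; both discrepancies are harmless for the coefficients with $n\geq 1$ and $0\leq i,j\leq n-1$, so your derivation and the paper's stated formula agree.
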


\begin{proof}
Multiply the recurrence in \cref{lem:condensation} by $u^iv^jw^n$ and taking sums gives
\begin{equation}
\begin{split}\label{lemproof:gf}
&\sum_{n=1}^\infty \sum_{i=1}^{n-1} \sum_{j=1}^{n-1} (R_n(i,j) u^i v^j w^n-  R_{n-1}(i-1,j-1)) u^i v^j w^n \\
=& \sum_{n=1}^\infty \sum_{i=1}^{n-1} \sum_{j=1}^{n-1} (T_n(i)T_{n-1}(j-1) -T_{n-1}(i-1)T_{n}(j))u^i v^j w^n.
\end{split}		
\end{equation}
Using $R_n(0,i)=T_{n-1}(i-1)$ and $R_n(i,j)=-R_n(j,i)$ we have that 
\begin{equation}
-\sum_{n=1}^\infty \sum_{i=1}^n R_n(i,0)u^i w^n=\sum_{n=1}^\infty u \mathbf{T}_{n-1}(u)w^n
\end{equation}
and 
\begin{equation}
\sum_{n=1}^\infty \sum_{j=1}^n R_n(0,j)u^j w^n=\sum_{n=1}^\infty v \mathbf{T}_{n-1}(v)w^n.
\end{equation}
Inserting the above two equations into~\eqref{lemproof:gf} gives
\begin{equation}
\mathbf{R}(u,v,w)(1-u w v) = 1+ \sum_{n=1}^\infty  \mathbf{T}_{n}(u) \mathbf{T}_{n-1}(v)v w^n-\sum_{n=1}^\infty u \mathbf{T}_{n-1}(u) \mathbf{T}_{n}(u) w^n.
\end{equation}
We divide both sides by $(1-u w v)$ and it remains to extract out the coefficient of $u^i v^j w^n$ in the above equation to prove the lemma. 
\end{proof}

\begin{lem} \label{lem:intbfT}
For $|u|<1$ 			
\begin{equation}
\begin{split}
\mathbf{T}_{n}(u)= &{\frac{1}{(2\pi \mathrm{i})^3}} 
\int_0^\infty \text{d}q 
\int_0^\infty \text{d}p 
\int_{\Gamma_0}{\text{d}t} 
\int_{\Gamma_0}{\text{d}z}
\int_{S}  \text{d}s \; 
e^{-p-q+\frac{p^2sq}{t^2}+t} \\
& \times\frac{ p^{2+n}q^{3+2n}(1+z)^{2n+2}(2+z)}{t^{3(1+n)}z^{1+n}(pq^2s(1+z)^2-t^3 uz)},
\end{split}
\end{equation}
	where the $s$-contour ${S}$ is chosen so that $\frac{ |t^3zu|}{p q^2|s(1+z)^2|}<1$.
\end{lem}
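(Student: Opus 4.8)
The plan is to evaluate the right-hand side directly and match it, coefficient by coefficient in $u$, against $\sum_{i\geq 0} T_n(i)\, u^i$. By \cref{lem:Tn} together with the explicit formula \eqref{eq:pnkl} (with $\ell=0$) and $C_m=(2m)!/(m!\,(m+1)!)$, one has
\[
T_n(i) = (-1)^i p(n,i,0) = \frac{(n+i+1)!\,(2n-i+1)!}{i!\,(3n-i+2)!}\,(3n-3i+2)\,C_{n-i},
\]
so it suffices to produce exactly this factorial-times-Catalan expression as the coefficient of $u^i$ in the integral. The engine is the standard integral representations \eqref{integral1}, \eqref{integral3} and \eqref{integral4}: the real integrals over $p,q$ generate factorials, the $t$-contour generates a reciprocal factorial, the $s$-contour generates $1/i!$, and the $z$-contour generates the binomials that assemble into a Catalan number.

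First I would expand the denominator as a geometric series valid on the contour $S$, namely
\[
\frac{1}{pq^2 s(1+z)^2 - t^3 uz} = \frac{1}{pq^2 s(1+z)^2}\sum_{i=0}^{\infty}\left(\frac{t^3 uz}{pq^2 s(1+z)^2}\right)^i,
\]
which converges precisely because $S$ is chosen so that $|t^3 zu|/(pq^2|s(1+z)^2|)<1$. Interchanging the sum with the integrals reduces the claim to identifying the coefficient of $u^i$, and I would then carry out the integrals in the order $s,t,p,q,z$. The $s$-integral combines the surviving power $s^{-(i+1)}$ with $e^{p^2 sq/t^2}$ and, by \eqref{integral4}, yields $(p^2 q/t^2)^i/i!$; this simultaneously supplies the factor $1/i!$ and shifts the powers of $p,q,t$. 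The $t$-integral against $e^{t}$ gives $1/(3n-i+2)!$ by \eqref{integral4}, and the two integrals \eqref{integral3} over $p$ and $q$ produce $(n+i+1)!$ and $(2n-i+1)!$ respectively. Finally, writing $(2+z)=(1+z)+1$, the $z$-integral splits via \eqref{integral1} into $\binom{2n-2i+1}{n-i}+\binom{2n-2i}{n-i}$, which simplifies to $(3(n-i)+2)\,C_{n-i}$.

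Collecting these factors reproduces the displayed expression for $T_n(i)$, so the coefficient of $u^i$ in the right-hand side equals $T_n(i)$ for every $i$, and summing over $i$ gives $\mathbf{T}_n(u)$. The main technical point to be careful about is the justification of the term-by-term integration: I would fix the $t,z$-contours as small circles $\Gamma_0$, take $S$ as a sufficiently large circle so that the geometric series converges on it, and use $|u|<1$ together with the exponential decay $e^{-p-q}$ and the polynomial prefactors to control the tails and legitimize the interchange. A secondary bookkeeping point is the summation range: for $i>n$ the two binomials $\binom{2n-2i+1}{n-i}$ and $\binom{2n-2i}{n-i}$ have negative lower index and hence vanish, while the $i=n$ term evaluates to $1=(-1)^n p(n,n,0)$, so the integral reproduces $\sum_{i=0}^{n}(-1)^i p(n,i,0)\,u^i$, consistent with extending $T_n(i)=(-1)^i p(n,i,0)$ to $i=n$.
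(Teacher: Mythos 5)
Your argument is, in substance, the paper's own computation run in reverse: the paper starts from $T_n(k)=(-1)^k p(n,k,0)$ (\cref{lem:Tn}), encodes the factorials and the factor $(3n-3k+2)C_{n-k}$ via the integral representations \eqref{integral1}, \eqref{integral3}, \eqref{integral4}, sums the resulting geometric series in $k$ over the $s$-contour, and finishes with the substitution $s\mapsto sp^2q/t^2$; you instead expand the kernel of the stated integral as a geometric series in $u$ and recover the coefficients with the same identities, your split $\binom{2m+1}{m}+\binom{2m}{m}=(3m+2)C_m$ replacing the paper's equivalent $2\binom{2m}{m}+\binom{2m}{m+1}$, and your handling of the $i=n$ term matching the paper's implicit convention. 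So the route is essentially the same; the one genuine problem is your declared order of integration.

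Carrying out the integrals in the order $s,t,p,q,z$ fails term by term in the range $2n+2\le i\le 3n+2$: there the $t$-integral gives the nonzero value $1/(3n+2-i)!$ and the $p$-integral is finite, but the $q$-integral
\begin{equation*}
\int_0^\infty q^{2n+1-i}e^{-q}\,\text{d}q
\end{equation*}
diverges at $q=0$ because $2n+1-i\le -1$. You cannot then invoke the vanishing of the $z$-binomials to kill these terms: they are not absolutely integrable over the product domain, so the iterated integral's very existence depends on the order, and in your order it does not exist. The repair is immediate and is exactly what the nesting in the statement dictates (the $s$-integral is innermost, then $z$): perform the $z$-integral directly after the $s$-integral. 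It vanishes for every $i>n$, so the series truncates to $0\le i\le n$ before the improper $p,q$-integrals are touched, and interchanging the (now finite) sum with those integrals is trivial; interchanging the infinite sum with the compact $s$-, $z$-, $t$-contours is justified by the factorial decay of the terms. Relatedly, $S$ cannot be one fixed ``sufficiently large circle'': the required inequality forces its radius to exceed $|t^3zu|/(pq^2|1+z|^2)$, which blows up as $p,q\to 0^+$, so $S$ must depend on the outer variables --- harmless precisely because the $s$-integral is innermost (its value is the residue at the unique pole $s_*=t^3uz/(pq^2(1+z)^2)$, independent of the radius once $S$ encloses it).
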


\begin{proof}
Notice that $\mathbf{T}_n(u)=\sum_{k = 0}^{{n}} T_n(k)u^k$ 
since $T_n(k)=0$ for $k \geq n$ and so we use integral expressions to find an explicit expression using geometric sums. 
We now use the expressions for binomial coefficients and factorials given in~\eqref{integral2},~\eqref{integral3}, and~\eqref{integral4}, as explained below, in the formula for $T_n(i)$ given in  \cref{lem:Tn} by applying these integrals to terms in the expression for $(-1)^kp(n,k,0)$~\eqref{eq:pnkl}. We obtain
			\begin{equation}\label{eq:firstTnintegral}
				\begin{split}
T_n(k)u^k&=\frac{1}{(2\pi \mathrm{i})^3} 
\int_0^\infty \text{d}q 
\int_0^\infty \text{d}p
\int_{\Gamma_0}  \frac{\text{d}z}{z}
\int_{\Gamma_0} \frac{\text{d}t}{t}
\int_{\Gamma_0} \frac{\text{d}s}{s} \;
e^{-p-q+s+t} \\
					&\times \frac{p^{n+k+1}q^{2n-k+1}(1+z)^{2n-2k}(2+z)}{s^k t^{3n-k+2}z^{n-k}}u^k.
				\end{split}			\end{equation}
				In the above equation, the integrals with respect to $p$ and $q$ are from the two factorial terms in the numerator of $(-1)^kp(n,k,0)$ using~\eqref{integral3}, while the integrals respect to $s$ and $t$ are from the two factorials in the denominator of $(-1)^kp(n,k,0)$ using~\eqref{integral4}. For the integral with respect to $z$, this comes from rewriting the term $(3n-3k+2)C_{n-k}$ in $(-1)^k p(n,k,0)$ as
				\begin{equation}
					(3n-3k+2)C_{n-k} = 2 \binom{2n-2k}{n-k}+\binom{2n-2k}{n-k+1},
				\end{equation}
				and expressing each of these binomial terms as integrals using~\eqref{integral1}.

We choose the $s$-contour in~\eqref{eq:firstTnintegral} so that 
				\begin{equation}
					\frac{p |tzu|}{q|s(1+z)^2|}<1,
				\end{equation}
and evaluate the geometric sum 
\begin{equation}
	\begin{split}
\sum_{k=1}^\infty		T_n(k)u^k&=\frac{1}{(2\pi \mathrm{i})^3} \int_0^\infty  \text{d}q
\int_0^\infty \text{d}p
\int_{\Gamma_0}  {\text{d}z}
\int_{\Gamma_0}   {\text{d}t}
\int_{S}{\text{d}s} \\
&		\frac{ e^{-p-q+s+t} p^{1+n}q^{2+2n}(1+z)^{2n+2}(2+z)}{t^{3(1+n)}z^{1+n}(qs(1+z)^2-pt uz)}.
				\end{split}			
\end{equation}
Making the change of variables $s\mapsto s p^2 q/t^2$ gives the result. 
\end{proof}

In what follows below, we use the notation $\text{d} \mathbf{p}=\text{d}p_1 \text{d}p_2$ and similarly for the variables $\mathbf{q},\mathbf{t},\mathbf{z},\mathbf{s}$
for compactness of notation. 

\begin{lem}\label{lem:finalR}
	For $0\leq i,j \leq n-1$,
\begin{equation}
\begin{split}
&	R_n(i,j)=\frac{1}{(2\pi \mathrm{i})^6}
\iint_{\mathbb{R}_+^2}			\text{d}\mathbf{q}
\iint_{\mathbb{R}_+^2}			\text{d}\mathbf{p}
\iint_{\Gamma_0^2}			\text{d}\mathbf{z}
\iint_{\Gamma_0^2}			\text{d}\mathbf{t}
\iint_{\Gamma_0^2}			\text{d}\mathbf{s}
\\
			&\times e^{-p_1-p_2-q_1-q_2+\frac{p_1^2s_1q_1}{t_1^2}+ \frac{p_2^2s_2q_2}{t_2^2}+t_1+t_2}\frac{ p_1^{1-i+n}p_2^{1-j+n}q_1^{1-2i+2n}q_2^{1-2j+2n}}{s_1^{1+i}s_2^{1+j} t_1^{3-3i+3n}t_2^{3-3j+3n} } \frac{s_1-s_2}{s_1s_2-1} \\&\times \frac{(1+z_1)^{2n-2i}(1+z_2)^{2n-2j}(2+z_1)(2+z_2)}{z_1^{n-i+1}z_2^{n-j+1}}.
		\end{split}	
		\end{equation}
\end{lem}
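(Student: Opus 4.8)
The plan is to substitute the integral representation of \cref{lem:intbfT} into the formula of \cref{lem:Req} and then evaluate the three contour integrals in $u$, $v$ and $w$ by residues at the origin. Concretely, I would use two independent copies of \cref{lem:intbfT}: one with variables $(p_1,q_1,t_1,z_1,s_1)$ for the factor $\mathbf{T}_m(u)$ (resp.\ $\mathbf{T}_{m-1}(u)$) and one with $(p_2,q_2,t_2,z_2,s_2)$ for $\mathbf{T}_{m-1}(v)$ (resp.\ $\mathbf{T}_m(v)$), where $m$ denotes the summation index of \cref{lem:Req}. This turns the right-hand side into a single integral over $u,v,w$ together with the ten variables $p_k,q_k,t_k,z_k,s_k$. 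The two $\tfrac{1}{(2\pi\mathrm{i})^3}$ prefactors from the copies combine with the $\tfrac{1}{(2\pi\mathrm{i})^3}$ of \cref{lem:Req}; once the three integrals $u,v,w$ are performed the surviving prefactor is exactly the $\tfrac{1}{(2\pi\mathrm{i})^6}$ of the statement.

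First I would carry out the $w$-integral. Writing $\tfrac{1}{1-uwv}=\sum_{\ell\ge0}(uwv)^\ell$ on the small contour and taking the residue at $w=0$ forces $\ell=n-m\ge0$ (so the sum over $m$ is truncated to $m\le n$) and produces a factor $(uv)^{n-m}$. The $u$- and $v$-integrals are then residues at the origin: the denominators $\bigl(p_kq_k^2 s_k(1+z_k)^2-t_k^3(\cdot)z_k\bigr)$ coming from \cref{lem:intbfT} are expanded in non-negative powers of $u$ and $v$, which is legitimate precisely because of the contour condition imposed on $S$ in \cref{lem:intbfT}. Setting $a_k=p_kq_k^2s_k(1+z_k)^2$, $b_k=t_k^3z_k$ and $C_k=p_kq_k^2(1+z_k)^2/(t_k^3z_k)$, the key simplification is the identity $C_kb_k/a_k=1/s_k$, so that the entire $m$-dependence collapses to powers of $s_1$ and $s_2$ alone. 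A short computation then shows that the $p_k,q_k,t_k,z_k$ exponents produced this way match those displayed in the statement term by term; for example the total power of $p_1$ is $2+m-(i+m-n+1)=n-i+1$, and $q_1$, $t_1$, $z_1$, $(1+z_1)$, $(2+z_1)$ work out identically.

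It remains to resum in $m$. The two terms of \cref{lem:Req} are interchanged by $(u,s_1)\leftrightarrow(v,s_2)$ together with the explicit factors $v$ and $u$; after the $u,v,w$ integrals the first term carries $s_1^{\ell-1-i}s_2^{\ell-j}$ and the second $-\,s_1^{\ell-i}s_2^{\ell-1-j}$, with $\ell=n-m$. Summing the geometric series $\sum_{\ell\ge0}(s_1s_2)^\ell$ reproduces exactly the kernel
\[
\frac{s_1-s_2}{s_1s_2-1}=-(s_1-s_2)\sum_{\ell\ge0}(s_1s_2)^\ell,
\]
the numerator $s_1-s_2$ being the signed combination of the two terms. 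Although the $\ell$-sum coming from the residues is finite (truncated by $m\le n$ and by the lower bounds $m\ge n-i$, $m\ge n-j+1$), I would replace it by the closed kernel integrated over the small contours $\Gamma_0^2$: the remaining weights $s_k^{-1-i}$, $s_k^{-1-j}$ together with the entire exponentials $e^{p_k^2 s_k q_k/t_k^2}$ ensure that only the terms $\ell\le\min(i,j-1)$ leave a nonzero residue at $s_k=0$, which is precisely the range dictated by the $u,v$-residues. Deforming the $s$-contours to $\Gamma_0$ is legitimate since the only other pole $s_1=1/s_2$ lies outside, and this yields the stated formula.

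The hard part will be this last repackaging step: one must justify replacing the finite residue-sum in $m$ by the compact closed-form kernel on the small $s$-contours, i.e.\ check that extending the geometric series to all $\ell\ge0$ is harmless because the residual $s$-integration automatically truncates it to the correct finite range, and that the two terms assemble into the antisymmetric numerator $s_1-s_2$ with the right signs. Secondarily, interchanging the $m$-summation with the Laplace-type integrals $\int_0^\infty\mathrm{d}p_k\,\mathrm{d}q_k$ needs a convergence/Fubini justification on the chosen contours; the remaining exponent and power bookkeeping, including the change of variables $s\mapsto sp^2q/t^2$ already built into \cref{lem:intbfT}, is routine.
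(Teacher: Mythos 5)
Your proposal is correct, and it is built from the same two ingredients as the paper's proof (substituting the representation of \cref{lem:intbfT} into \cref{lem:Req}), but the contour-integral evaluation is organized in the opposite order, and this changes the character of the argument. The paper first performs the geometric resummation over the summation index $m$ inside the integrand, using the factor $\tilde R^m w^m$ with $\tilde R=p_1p_2q_1^2q_2^2(1+z_1)^2(1+z_2)^2/(t_1^3t_2^3z_1z_2)$, and then computes the $w$-, $u$- and $v$-integrals by deforming contours through infinity and collecting residues at the \emph{non-origin} poles ($w=1/\tilde R$, then $u=\tilde R/v$ and $u=p_1q_1^2s_1(1+z_1)^2/(t_1^3z_1)$, and so on); this forces a cancellation step, in which a whole group of residue contributions is shown to vanish because the resulting expression has no residue at $s_1=0$, whose details the paper omits. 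You instead extract residues at the origin in $w,u,v$ first, which truncates everything to a finite sum in $\ell=n-m$, collapse the $m$-dependence onto powers of $s_1,s_2$ via your identity $C_kb_k/a_k=1/s_k$ (your exponent bookkeeping for $p_k,q_k,t_k,z_k$ checks out), and only at the end resum into the kernel $(s_1-s_2)/(s_1s_2-1)$. Your justification for extending the finite sum to the full geometric series is the right one: outside the kernel the only $s_k$-dependence is $e^{p_k^2s_kq_k/t_k^2}s_1^{-1-i}s_2^{-1-j}$, so the double residue at $s_1=s_2=0$ kills exactly the terms outside the range produced by the $u,v$-residues. One small imprecision: the two terms of \cref{lem:Req} truncate at slightly different places, $\ell\le\min(i,j-1)$ for the first and $\ell\le\min(i-1,j)$ for the second, and the kernel expansion reproduces each of these ranges separately, which is what actually matters. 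What your route buys is the avoidance of the deformations through infinity and of the omitted cancellation computation, replacing them with plain coefficient extraction plus a transparent truncation argument; what it costs is the Fubini-type interchange of the infinite $m$-sum with the ten remaining integrals, which is unproblematic on the stated contours since the $T_m(k)$ are bounded.
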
 
	
\begin{proof}
This lemma follows from a computation starting with the formula for $R_n(i,j)$ given in \cref{lem:Req}.  For brevity, we only present the computation of the first term $\mathbf{T}_{n}(u)\mathbf{T}_{n-1}(v) v w^n$; the computation for the other term is completely analagous.  

Introduce 
		\begin{equation}
			\tilde{R}= \frac{p_1p_2 q_1^2 q_2^2 (1+z_1)^2(1+z_2)^2}{t_1^3 t_2^3 z_1 z_2}.
		\end{equation}
		We can use the integral expression for $\mathbf{T}_n(u)$ given in \cref{lem:intbfT} to find that 
		\begin{equation}
			\begin{split}
&	\sum_{n=1}^\infty 	\mathbf{T}_n(u)\mathbf{T}_{n-1}(v) v w^n =  \frac{1}{(2\pi \mathrm{i})^6}
\iint_{\mathbb{R}_+^2}			\text{d}\mathbf{q}
\iint_{\mathbb{R}_+^2}			\text{d}\mathbf{p}
\iint_{\Gamma_0^2}			\text{d}\mathbf{z}
\iint_{\Gamma_0^2}			\text{d}\mathbf{t}
\iint_{S_1 \times S_2} 			\text{d}\mathbf{s}
\\
				&\times v\sum_{n=1}^\infty w^n \tilde R^n
				\frac{e^{-p_1-q_1+\frac{p_1^2s_1q_1}{t_1^2}+t_1-p_2-q_2+\frac{p_2^2s_2q_2}{t_2^2}+t_2}p_1^{2}p_2q_1^{3}q_2(1+z_1)^2 (2+z_1)(2+z_2)}{t_1^3 z_1(p_1q_1^2s_1(1+z_1)^2-t_1^3 uz_1)(p_2q_2^2s_2(1+z_2)^2-t_2^3 vz_2)}
			\end{split}		\end{equation}
where the $s_1$-contour ${S_1}$ is chosen so that $\frac{ |t_1^3z_1u|}{p_1 q_1^2|s_1(1+z_1)^2|}<1$ and  the $s_2$-contour ${S_2}$ is chosen so that $\frac{ |t_2^3z_2u|}{p_2 q_2^2|s_2(1+z_2)^2|}<1$. 
			We substitute the above expression into the right side of the equation in \cref{lem:Req} which gives
\begin{equation} \label{eq:prebigcontour}
\begin{split}
&\frac{1}{(2\pi \mathrm{i})^3} 
\int_{\Gamma_0} \frac{\text{d}u}{u}
\int_{\Gamma_0} \frac{\text{d}v}{v}
\int_{\Gamma_0}\frac{\text{d}w}{w}
\frac{1}{1-uwv} \frac{1}{u^iv^jw^n}	\sum_{n=1}^\infty \mathbf{T}_n(u) \mathbf{T}_{n-1}(v) v w^n\\
&=\frac{1}{(2\pi \mathrm{i})^3} 
\int_{\Gamma_0} \frac{\text{d}u}{u}
\int_{\Gamma_0} \frac{\text{d}v}{v}
\int_{\Gamma_0} \frac{\text{d}w}{w}
\frac{1}{1-uwv} \frac{1}{u^iv^jw^n}\frac{\tilde Rw v}{1-\tilde Rw}\\
					&\times
\frac{1}{(2\pi \mathrm{i})^6}
\iint_{\mathbb{R}_+^2}			\text{d}\mathbf{q}
\iint_{\mathbb{R}_+^2}			\text{d}\mathbf{p}
\iint_{\Gamma_0^2}			\text{d}\mathbf{z}
\iint_{\Gamma_0^2}			\text{d}\mathbf{t}
\iint_{{S_1\times S_2}} \text{d}\mathbf{s}
\\
				&	\times			\frac{e^{-p_1-q_1+\frac{p_1^2s_1q_1}{t_1^2}+t_1-p_2-q_2+\frac{p_2^2s_2q_2}{t_2^2}+t_2}p_1^{2}p_2q_1^{3}q_2(1+z_1)^2 (2+z_1)(2+z_2)}{t_1^3 z_1(p_1q_1^2s_1(1+z_1)^2-t_1^3 uz_1)(p_2q_2^2s_2(1+z_2)^2-t_2^3 vz_2)}.
				\end{split}
			\end{equation}
			In the above expression, we rearrange the integrals and compute the $w$-integral first (no extra contribution is picked up). We push the contour through $\infty$ picking up residue contributions at $w=1/R$ and $w=1/(u v)$.  The latter will not contribute since when computing the residue at $w=1/(u v)$, there will be no residue at $u=0$.  This means that the above equation becomes
\begin{equation}
\begin{split}
&	\frac{1}{(2\pi \mathrm{i})^8}  
\int_{\Gamma_0} \frac{\text{d}u}{u}
\int_{\Gamma_0}\frac{\text{d}v}{v}
\iint_{\mathbb{R}_+^2}			\text{d}\mathbf{q}
\iint_{\mathbb{R}_+^2}			\text{d}\mathbf{p}
\iint_{\Gamma_0^2}			\text{d}\mathbf{z}
\iint_{\Gamma_0^2}			\text{d}\mathbf{t}
\iint_{S_1 \times S_2}	\text{d}\mathbf{s}
\\
				&\times		\frac{1}{\tilde R-u v} \frac{\tilde R^{n+1}v}{u^iv^j}		\frac{e^{-p_1-q_1+\frac{p_1^2s_1q_1}{t_1^2}+t_1-p_2-q_2+\frac{p_2^2s_2q_2}{t_2^2}+t_2}p_1^{2}p_2q_1^{3}q_2(1+z_1)^2 (2+z_1)(2+z_2)}{t_1^3 z_1(p_1q_1^2s_1(1+z_1)^2-t_1^3 uz_1)(p_2q_2^2s_2(1+z_2)^2-t_2^3 vz_2)}.
			\end{split}\end{equation}
				Next, we rearrange integrals to compute the $u$-integral  by computing the residue contributions at $u=R/v$ and $u=p_1 q_1^2 s_1(1+z_1)^2/(t_1^3 z_1)$ and so we obtain
\begin{equation}
\begin{split}\label{eq:bigcontourexpression}
&	\frac{1}{(2\pi \mathrm{i})^7}  
\int_{\Gamma_0}\frac{\text{d}v}{v}
\iint_{\mathbb{R}_+^2}			\text{d}\mathbf{q}
\iint_{\mathbb{R}_+^2}			\text{d}\mathbf{p}
\iint_{\Gamma_0^2}			\text{d}\mathbf{z}
\iint_{\Gamma_0^2}			\text{d}\mathbf{t}
\iint_{S'_1 \times S_2}			\text{d}\mathbf{s}
\\
					&\times		 \frac{\tilde R^{n-i}}{v^{j-1-i}}		\frac{e^{-p_1-q_1+\frac{p_1^2s_1q_1}{t_1^2}+t_1-p_2-q_2+\frac{p_2^2s_2q_2}{t_2^2}+t_2}p_1^{2}p_2q_1^{3}q_2(1+z_1)^2 (2+z_1)(2+z_2)}{t_1^3 z_1(p_1q_1^2s_1(1+z_1)^2-t_1^3 vz_1)(p_2q_2^2s_2(1+z_2)^2-t_2^3 vz_2)}\\
&+\frac{1}{(2\pi \mathrm{i})^7}  
\int_{\Gamma_0}\frac{\text{d}v}{v}
\iint_{\mathbb{R}_+^2}			\text{d}\mathbf{q}
\iint_{\mathbb{R}_+^2}			\text{d}\mathbf{p}
\iint_{\Gamma_0^2}			\text{d}\mathbf{z}
\iint_{\Gamma_0^2}			\text{d}\mathbf{t}
\iint_{S_1 \times S_2}			\text{d}\mathbf{s}
\\
					&\times		 \frac{t_1^{3i}z_1^{i-1}e^{-p_1-q_1+\frac{p_1^2s_1q_1}{t_1^2}+t_1-p_2-q_2+\frac{p_2^2s_2q_2}{t_2^2}+t_2}p_1^{2}p_2q_1^{3}q_2(1+z_1)^2 (2+z_1)(2+z_2)}{(p_1 q_1^2s_1(1+z_1)^2)^{i}(p_1q_1^2s_1(1+z_1)^2v-R t_1^3 z_1)v^{j+1} t_1^3 z_1(p_2q_2^2s_2(1+z_2)^2-t_2^3 vz_2)},
			\end{split}\end{equation}
	where $S_1'$ is chosen so that $\frac{ |t_1^3z_1v|}{p_1 q_1^2|s_1(1+z_1)^2|}<1$.  We compute each of the above terms separately.  For the first term in~\eqref{eq:bigcontourexpression}, we rearrange the integrals to compute the $v$-integral and there is a residue at $v=0$, provided that $j>i+1$, and further residue contributions at $v=t_1^3 \tilde R z_1/(p_1q^2_1 s_1(1+z_1)^2)$ and $v=t_2^3  z_2/(p_2q^2_2 s_2(1+z_2)^2)$ due to exchanging contours.  Summing up these contribution gives zero because there is no residue at $s_1=0$ after simplification (we omit the details of this computation). Thus,~\eqref{eq:prebigcontour} is equal to the second term in~\eqref{eq:bigcontourexpression}. For the second term in ~\eqref{eq:bigcontourexpression}, we perform the same computational steps as mentioned for the first term and we arrive at
\begin{equation}
\begin{split}
&\frac{1}{(2\pi \mathrm{i})^6}  
\iint_{\mathbb{R}_+^2}			\text{d}\mathbf{q}
\iint_{\mathbb{R}_+^2}			\text{d}\mathbf{p}
\iint_{\Gamma_0^2}			\text{d}\mathbf{z}
\iint_{\Gamma_0^2}			\text{d}\mathbf{t}
\iint_{\Gamma_0^2}			\text{d}\mathbf{s}
 e^{-p_1-q_1+\frac{p_1^2s_1q_1}{t_1^2}+t_1-p_2-q_2+\frac{p_2^2s_2q_2}{t_2^2}+t_2}\\
		&\times		\frac{ p_1^{1-i+n} p_2^{1-j+n} q_1^{1-2i+2n}q_2^{1-2j+2n}}{s_1^{1+i}s_2^j t_1^{3-3i+3n}t_2^{3-3j+3n}(s_1s_2-1)} \frac{(1+z_1)^{2n-2i}}{z_1^{n-i+1}} \frac{(1+z_2)^{2n-2j}}{z_2^{n-j+1}}(2+z_1)(2+z_2).
	\end{split}
	\end{equation}
We have now evaluated~\eqref{eq:prebigcontour} and the second term on right side of the equation in \cref{lem:Req} follows by symmetry. 
	\end{proof}

\section{Proof of the main result} 
\label{sec:main}

We will now use the results from the previous sections to prove \cref{thm:main}.
For the proof, we will need the following result.

\begin{prop} 
\label{prop:lastclaim}
For $0\leq j \leq 2n-1$
\begin{equation} 
	-K^{-1}_n((0,0),(j,j+1))=K^{-1}_n((j,j+1),(0,0))=K^{-1}_n((j,j+1),\mathbf{b}).
\end{equation}
\end{prop}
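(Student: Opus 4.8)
The first equality is nothing but the antisymmetry $K_n^{-1}(x,y)=-K_n^{-1}(y,x)$, so everything reduces to the second equality. Here the plan is to avoid computing $K_n^{-1}(\cdot,(0,0))$ at all and instead work with the difference vector. Let $\phi$ be the vector on $\mathtt V_n$ defined by $\phi(x)=K_n^{-1}(x,(0,0))-K_n^{-1}(x,\mathbf b)$. Reading off two columns of $K_nK_n^{-1}=\mathbb I$ gives
\[
(K_n\phi)(w)=\sum_{x\in\mathtt V_n}K_n(w,x)\,\phi(x)=\mathbbm 1_{w=(0,0)}-\mathbbm 1_{w=\mathbf b},\qquad w\in\mathtt V_n .
\]
Since $|\mathrm{Pf}\,K_n|=Z_n>0$, the matrix $K_n$ is invertible, so $\phi$ is the \emph{unique} solution of this linear system. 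The proposition asserts exactly that $\phi$ vanishes on the near-diagonal vertices $(j,j+1)$, $0\le j\le 2n-1$, so it suffices to produce the solution explicitly and observe that it is supported on the diagonal.

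The candidate I would check is the alternating diagonal vector $\psi$ given by $\psi((m,m))=(-1)^m$ for $0\le m\le 2n$, together with $\psi(\mathbf b)=1$ when $n$ is even, and $\psi\equiv 0$ otherwise. The verification that $K_n\psi=\mathbbm 1_{(0,0)}-\mathbbm 1_{\mathbf b}$ is purely local. From \eqref{KasteleynMatrix} one reads off, for a diagonal vertex $(m,m)$,
\[
K_n((m,m),(m-1,m-1))=1,\quad K_n((m,m),(m+1,m+1))=-1,\quad K_n((m,m),(m-1,m))=K_n((m,m),(m,m+1))=1,
\]
and crucially a diagonal vertex is adjacent only to diagonal vertices and to the odd vertices $(m-1,m),(m,m+1)$. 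Consequently $(K_n\psi)(w)=0$ automatically at every even off-diagonal $w$ and at every odd $w$ not of the form $(m,m+1)$. At an odd vertex $(m,m+1)$ one gets $(K_n\psi)((m,m+1))=-(\psi((m,m))+\psi((m+1,m+1)))=-((-1)^m+(-1)^{m+1})=0$, while at an interior diagonal vertex $(m,m)$ one gets $\psi((m-1,m-1))-\psi((m+1,m+1))=(-1)^{m-1}-(-1)^{m+1}=0$; the alternating sign is exactly what makes these contributions telescope. The only surviving contributions are at the two ends of the diagonal, producing $+1$ at $(0,0)$ and $-1$ at $\mathbf b$. By uniqueness $\phi=\psi$, whence $\phi((j,j+1))=\psi((j,j+1))=0$ for all $j$, which is the claim.

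The one genuinely case-sensitive point — and where I expect the bookkeeping to require care — is the top end of the diagonal, because of the parity split in $\mathbf b=(2n,2n+1-[n]_2)$. When $n$ is odd, $\mathbf b=(2n,2n)$ is itself the diagonal endpoint, the extra term is absent, and the endpoint contribution $K_n((2n,2n),(2n-1,2n-1))\,\psi((2n-1,2n-1))=(-1)^{2n-1}=-1$ directly supplies the $-\mathbbm 1_{\mathbf b}$. When $n$ is even, $\mathbf b=(2n,2n+1)$ is instead a pendant leaf attached to $(2n,2n)$ with $K_n((2n,2n),\mathbf b)=1$; the pure diagonal vector would then leave a spurious defect at $(2n,2n)$, and the extra term $\psi(\mathbf b)=1$ is precisely what cancels it (since $K_n((2n,2n),(2n-1,2n-1))\,\psi((2n-1,2n-1))+K_n((2n,2n),\mathbf b)\,\psi(\mathbf b)=-1+1=0$) while placing the required $-1$ at $\mathbf b$. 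Verifying these two endpoint computations, together with the bottom endpoint where $K_n((0,0),(1,1))\,\psi((1,1))=(-1)(-1)=+1$, completes the proof; no global generating-function or condensation machinery is needed, only the local weights in \eqref{KasteleynMatrix} and the invertibility of $K_n$.
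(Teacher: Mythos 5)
Your proof is correct, but it proceeds quite differently from the paper's. The paper argues combinatorially: it interprets $K^{-1}_n((j,j+1),(0,0))$ as a signed ratio $Z_n^{\{(0,0),(j,j+1)\}}/Z_n$ and exhibits a sign-preserving bijection between dimer configurations of $\mathtt{V}_n\backslash\{(0,0),(j,j+1)\}$ and of $\mathtt{V}_n\backslash\{\mathbf{b},(j,j+1)\}$, obtained by stripping all dimers incident to the diagonal $\{(i,i)\}$ and observing that there is a unique way to re-extend. You instead work linear-algebraically: you form the column difference $\phi(x)=K_n^{-1}(x,(0,0))-K_n^{-1}(x,\mathbf{b})$, note that it is the unique solution of $K_n\phi=\mathbbm{1}_{(0,0)}-\mathbbm{1}_{\mathbf{b}}$ by invertibility of $K_n$ (valid since $|\mathrm{Pf}\,K_n|=Z_n>0$), and then guess and locally verify the solution $\psi$: alternating signs $(-1)^m$ on the diagonal, the value $1$ at the leaf $\mathbf{b}$ when $n$ is even, zero elsewhere. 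I checked your local computations against \eqref{KasteleynMatrix} — the adjacency structure of diagonal vertices, the telescoping at interior diagonal and near-diagonal odd vertices, and both endpoint cases (including the parity split at $\mathbf{b}$) — and they are all correct. The trade-off: the paper's bijection is shorter and makes the combinatorial mechanism transparent (the forced structure of matchings along the diagonal), though it is somewhat terse about sign preservation; your argument is entirely mechanical, requiring only the matrix entries and uniqueness, and it yields strictly more information, namely the entire column difference $K_n^{-1}(x,(0,0))-K_n^{-1}(x,\mathbf{b})$ for every vertex $x$ (for instance it recovers $K_n^{-1}((0,0),\mathbf{b})=-1$, i.e.\ $g_n^{\mathbf{b}}(0)=1$, and shows $K_n^{-1}((2n,2n),(0,0))=0$ for $n$ even), not just its vanishing at the vertices $(j,j+1)$.
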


\begin{proof}
The first equality is from antisymmetry of $K_n$.  For the second equality, notice that $K^{-1}_n((j,j+1),(0,0))$ represents the ratio of a signed count on the graph formed from removing the vertices $(j,j+1)$ and $(0,0)$ along with their incident edges from the TSSCPP of size $n$ and $Z_n$.  The sign  arises from having an even number of counterclockwise edges around the face which surrounds the removed vertex $(j,j+1)$. For each of these dimer configurations (which is part of the signed count) on $\mathtt{V}_n\backslash \{(0,0),(j,j+1)\}$, remove all dimers incident to the vertices $\{(i,i):0 \leq i \leq 2n-1\}$.  Then, there is a unique way to extend to a dimer configuration   on $\mathtt{V}_n\backslash \{ \mathbf{b},(j,j+1)\}$ as required. Note that this operation does not change the sign associated to each dimer configuration. 
\end{proof}

\begin{proof}[Proof of \cref{thm:main}] 
		We will first focus on the formula \eqref{tn11} for $t_n^{1,1}(i,j)$. The formulas for $t_n^{k,l}(i,j)$ for $(k,l) \not = (1,1)$ are obtained by very similar computations. 
		Once these are found, the rest of entries are obtained from the matrix equations $K_n.K_n^{-1}=K_n^{-1}.K_n =\mathbbm{I}$. 

		To obtain $t_n^{1,1}(i,j)$, first observe that $t_n^{1,1}$ is a linear combination of binomial coefficients for $R_n(k,l)$ by using the formula $K_n.K_n^{-1}=K_n^{-1}.K_n =\mathbbm{I}$.  Indeed, notice that from the equation $K_n.K_n^{-1}=\mathbbm{I}$ entrywise we have 
		\begin{equation}
			K_n^{-1}((i_1,i_1+2i_2+1), y )=-K_n^{-1}((i_1-1,i_1+2i_2+2), y )-K_n^{-1}((i_1,i_1+2i_2+3), y)
		\end{equation}
		for $1 \leq i_1 \leq 2n$, $0\leq i_2 \leq n-\lceil i_1/2\rceil -1$ and $y=(j_1,j_1+2j_2+1)$ (a similar relation holds for $y$).  Using the above equation, we see that 
		\begin{equation}
			K^{-1}_n((i,i+1),(j,j+1))=	t_n^{1,1}(i,j) = \sum_{k_1=0}^{\lfloor \frac{i}{2} \rfloor} \sum_{k_2=0}^{\lfloor \frac{j}{2} \rfloor} (-1)^{k_1+k_2} \binom{n-k_1}{i-2k_1} \binom{n-k_2}{i-2k_2} R_n(k_1,k_2),
		\end{equation}
		since $R_n(k_1,k_2)=K^{-1}_n((2k_1,2n+1),(2k_2,2n+1))$ by \eqref{defR} and \cref{lem:signs}.   It suffices to evaluate the right side of the above equation and show that it equals $t_n^{1,1}(i,j)$ as given in~\eqref{tn11}.
		To do so, notice that by using the residue theorem
		\begin{equation}
			\frac{1}{2\pi \mathrm{i}} \int_{\Gamma_0} \frac{\text{d}t}{t}\frac{1}{t^{3n+2-2k}} e^{\frac{p^2 s q}{t^2}+t} = \sum_{\ell=0}^\infty \frac{(p^2 s q)^\ell }{\ell! (3n+2\ell+2-3k)!}.
		\end{equation}
		Using the above two equations, the formula for $R_n$ found in \cref{lem:finalR}, and writing binomial coefficients above as contour integrals in the $\mathbf{r}$ variables using \eqref{integral1}, we obtain
		\begin{equation}
			\begin{split}
				& \sum_{\ell_1=0}^\infty\sum_{\ell_2=0}^\infty \sum_{k_1=0}^{\lfloor \frac{i}{2} \rfloor} \sum_{k_2=0}^{\lfloor \frac{j}{2} \rfloor} \frac{1}{\ell_1! \ell_2!} \frac{(-1)^{k_1+k_2}}{(2\ell_1+3n+2-3k_1)! (2\ell_1+3n+2-3k_2)! } \\
&\times\frac{1}{(2 \pi \mathrm{i})^6} 
\iint_{\mathbb{R}_+^2}			\text{d}\mathbf{q}
\iint_{\mathbb{R}_+^2} 	\text{d}\mathbf{p}
\iint_{\Gamma_0^2}		\text{d}\mathbf{z}
\iint_{\Gamma_0^2}		\text{d}\mathbf{s}
\iint_{\Gamma_0^2}		\text{d}\mathbf{r}
				\frac{(1+r_1)^{n-k_1}}{r_1^{i-2k_1+1}} \\
&\times \frac{(1+r_2)^{n-k_2}}{r_2^{j-2k_2+1}} p_1^{1+2\ell_1-k_1+n} p_2^{1+2\ell_2-k_2+n} \frac{ q_1^{1-2k_1+\ell_1+2n}q_2^{1-2k_2+\ell_2+2n}}{s_1^{1+k_1-\ell_1} s_2^{1+k_2-\ell_2}}  \\
&\times \frac{s_1-s_2}{s_1 s_2-1}  \frac{(1+z_1)^{2n-2k_1}}{z_1^{n-k_1+1}}\frac{(1+z_2)^{2n-2k_2}}{z_2^{n-k_2+1}}(2+z_1)(2+z_2)e^{-p_1-p_2-q_1-q_2} .
			\end{split}
		\end{equation}
		In the above formula, we use the standard expressions for integrals to give closed forms. That is, we use~\eqref{integral3} to convert the integrals with respect to the variables $\mathbf{p}$ and $\mathbf{q}$ and use ~\eqref{integral4} to convert the integral with respect to the variable $\mathbf{s}$.  The integral with respect to $\mathbf{z}$ is converted using~\eqref{integral1}; see the discussion after~\eqref{eq:firstTnintegral} for the computation in reverse.
				Taking the change of summation $k_\varepsilon\mapsto \ell_\varepsilon-k_\varepsilon$ for $\varepsilon \in\{1,2\}$ gives the formula for $t_n^{1,1}(i,j)$ given in~\eqref{tn11}.  	

		We now explain how to get the remaining formulas from~\eqref{tn11}. From expanding the equation $K_n . K_n^{-1}=\mathbbm{I}$ entrywise, we have the formula for $i \geq 2$ 
		\begin{equation}
			\begin{split}
				&-K_n^{-1}((i,i),(j,j+1))+K_n^{-1}((i-2,i-2),(j,j+1))+K_n^{-1}((i-1,i),(j,j+1)) \\
				&K_n^{-1}((i-2,i-1),(j,j+1))= \mathbbm{I}_{(i,i)=(j,j+1)}=0.
			\end{split}		\end{equation}
We now rearrange the above equation to obtain
			\begin{equation}
				K_n^{-1}((i,i),(j,j+1))=\sum_{r=0}^{i-1} K_n^{-1}((r,r+1),(j,j+1))+[i+1]_2 K^{-1}_n((0,0),(j,j+1)).
			\end{equation}
			Since $K_n^{-1}((r,r+1),(j,j+1))=t_n^{(1,1)}(r,j)$, we can push the sum through to the terms in the integral in~\eqref{tn11} and evaluate. This computation and \cref{prop:lastclaim} gives~\eqref{tn01}.
			The formulas in~\eqref{tn10} and~\eqref{tn00} follow from similar computations. 

			It is not hard to see that~\eqref{Kinv11}, ~\eqref{Kinv00} and~\eqref{Kinv10} follow from a straightforward applications of $K_n.K_n^{-1}=K_n^{-1}.K_n=\mathbbm{I}$.  
	\end{proof}

\section{Proofs of the combinatorial identities} 
\label{sec:identities}

We now give the proofs of the combinatorial results used in \cref{sec:vertexb}.
 
\begin{proof}[Proof of \cref{thm:sumf}]
A minor issue is that the natural upper limit for the sum is $k=n+1$, not $k=n$, although this
is what it seems at first glance. Moreover, $f_n(n+1) = -(-1)^n/2$. Therefore, we 
need to prove that $\sum_k f_n(k) = 1/2$.
\texttt{Maple} solves this using the Wilf-Zeilberger algorithm. Then we need to check that
\[
f_n(k) - f_{n+1}(k) = h(n,k+1) - h(n,k),
\]
where the certificate is
\begin{multline*}
h(n,k) = (-1)^k 2^{n-k+1} (n+2-k) (n-k+1) (5n-3k+8) \\
\times \frac{(2n+2-k)! (n+1+k)! (2n+2-2k)!}{(k-1)! (n+2-k)!^2 (3n+5-k)!}.
\end{multline*}
This guarantees that $\sum_k f_n(k)$ is constant. We now check 
that $f_0(0) + f_0(1) = 1 - 1/2 = 1/2$, completing the proof.
\end{proof}

Before proving \cref{thm:sumg}, we need another identity in the sequel, which we prove now.

\begin{thm}
\label{thm:sumf'}
Let $n \in \mathbb{N}$ and $0 \leq i \leq n-1$. Define
\[
f'_{n,i}(k) = (-1)^{k+1}  \frac{(3n-3k+2)(n+k+1)! (2n-k+1)! (2n-i-k)!}{k! (3n-k+2)! (n+i-2k+1)! (n+k-2i)!} C_{n-k}.
\]
Then $\sum_{k=0}^n f'_{n,i}(k) = 0$.
\end{thm}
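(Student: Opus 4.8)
For $n \in \mathbb{N}$ and $0 \leq i \leq n-1$, with
\[
f'_{n,i}(k) = (-1)^{k+1}  \frac{(3n-3k+2)(n+k+1)! (2n-k+1)! (2n-i-k)!}{k! (3n-k+2)! (n+i-2k+1)! (n+k-2i)!} C_{n-k},
\]
we have $\sum_{k=0}^n f'_{n,i}(k) = 0$.

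The plan is to prove this identity by \emph{creative telescoping} (Gosper's algorithm), exactly as was done for \cref{thm:sumf}. The summand $f'_{n,i}(k)$ is a product of factorials times the sign $(-1)^{k+1}$ and the linear factor $(3n-3k+2)$, hence a hypergeometric term in $k$: the ratio $f'_{n,i}(k+1)/f'_{n,i}(k)$ is a rational function of $k$, with $n$ and $i$ as parameters. Because the asserted value is the \emph{constant} $0$ rather than a quantity depending on $n$ and $i$, the natural mechanism is that $f'_{n,i}(k)$ is itself an exact difference in $k$. I would therefore try to Gosper-sum it directly, rather than first extracting a recurrence.

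Concretely, I would first record the term ratio
\[
\frac{f'_{n,i}(k+1)}{f'_{n,i}(k)} = -\,\frac{(3n-3k-1)(n+k+2)(3n-k+2)(n+i-2k+1)(n+i-2k)(n-k)(n-k+1)}{(3n-3k+2)(2n-k+1)(2n-i-k)(2n-2k)(2n-2k-1)(k+1)(n+k+1-2i)},
\]
and then run Gosper's algorithm in \texttt{Maple} to produce a rational certificate $R(n,i,k)$ such that, setting $G(n,i,k) := R(n,i,k)\,f'_{n,i}(k)$, one has the telescoping relation $f'_{n,i}(k) = G(n,i,k+1) - G(n,i,k)$. With the certificate in hand, verifying this relation reduces, after dividing by $f'_{n,i}(k)$ and clearing factorials with the ratio above, to a single polynomial identity in $k$ with parameters $n,i$, which is routine to check.

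Summing the telescoping relation then gives $\sum_{k} f'_{n,i}(k) = G(n,i,k_{\max}+1) - G(n,i,k_{\min})$, and it remains only to check that both boundary terms vanish. Here I would exploit that the summand is automatically supported on a finite, $i$-dependent window: the denominator factor $(n+i-2k+1)!$ forces $f'_{n,i}(k)=0$ once $k > (n+i+1)/2$, the factor $(n+k-2i)!$ forces $f'_{n,i}(k)=0$ once $k < 2i-n$ (both understood via $1/m! = 0$ for negative integers $m$), and the Catalan factor $C_{n-k}$ kills all terms with $k>n$. Since the Gosper certificate $R(n,i,k)$ is regular at the edges of this window, $G(n,i,k)$ inherits the vanishing of $f'_{n,i}$ just outside the support at both ends, so the telescoped boundary terms are both $0$, yielding $\sum_{k=0}^{n} f'_{n,i}(k) = 0$.

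The main obstacle is the boundary bookkeeping rather than the algebra: the two-sided truncation of the support depends on the relative sizes of $n$ and $2i$, so one must confirm that the antidifference vanishes at \emph{both} ends across all parameter regimes (for instance $2i \le n$ versus $2i > n$), where the nonvanishing range of $k$ shifts. If the direct Gosper step were to fail, the fallback is Zeilberger's algorithm: produce a recurrence for $\sum_k f'_{n,i}(k)$ in one of the two parameters and check that it is satisfied by $0$ together with a base case. A more structural alternative would be to recognise the sum as a terminating, balanced ${}_4F_3$-type series and read off its vanishing from a contiguous relation or a Pfaff--Saalschütz/Dougall-type evaluation; this is more illuminating but harder to set up cleanly because of the moving lower summation limit, so I would keep the WZ argument as the primary proof.
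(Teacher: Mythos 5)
Your term ratio is computed correctly, but the proposal has a genuine gap at its core: the primary route assumes, without justification, that $f'_{n,i}(k)$ is Gosper-summable in $k$, i.e.\ that there is a rational certificate $R(n,i,k)$ with $f'_{n,i}(k)=G(n,i,k+1)-G(n,i,k)$ for $G=R\,f'_{n,i}$. Vanishing of the definite sum does \emph{not} imply the existence of a hypergeometric antidifference --- that is a strictly stronger property --- so the sentence ``the natural mechanism is that $f'_{n,i}(k)$ is itself an exact difference in $k$'' is a heuristic, not a theorem, and the whole argument after it is conditional on an algorithm succeeding. The paper's own proof proceeds exactly as one would expect if direct Gosper summation fails (note that the same authors were happy to use a plain WZ certificate for \cref{thm:sumf}): writing $F'_{n,i}=\sum_k f'_{n,i}(k)$, they apply Zeilberger's algorithm to obtain a first-order recurrence in the parameter $i$,
\begin{equation*}
2(i+1)(2n+1-i)(2n+1-2i) F'_{n,i} + (3n+2-i)(n-1-i)(n+2+i) F'_{n,i+1}
= (-1)^n \frac{12 (n+1)^2 (n-i)!}{(2n-2i)!\,(i-n)!},
\end{equation*}
certified by a rational function $R(k,i)$ such that the corresponding combination of summands equals $\Delta_k\bigl(f'_{n,i}(k)R(k,i)\bigr)$.

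Your fallback (``produce a recurrence for the sum in one of the two parameters and check that it is satisfied by $0$ together with a base case'') is indeed the paper's strategy, but as sketched it misses the two points that make that strategy work. First, the Zeilberger recurrence is \emph{inhomogeneous}: a naive check that $0$ satisfies it fails unless one observes that the right-hand side vanishes for $0\leq i\leq n-1$ precisely because of the factor $1/(i-n)!$. Second, the base case is not free: the paper extracts $F'_{n,n-1}=0$ from the recurrence itself, since the coefficient of $F'_{n,i+1}$ carries the factor $(n-1-i)$, and then runs the recurrence downward in $i$ (the clean recurrence is in $i$, not in $n$, a choice you leave open). So the proposal is salvageable only by actually executing the fallback and supplying these missing ingredients; the primary route should not be the default, since no reason is given --- and there is circumstantial evidence against --- $f'_{n,i}(k)$ admitting a Gosper certificate at all.
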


\begin{proof}
Let $F'_{n,i}$ be the required sum. Then Zeilberger's algorithm implemented in the  
\texttt{Mathematica} package {\em fastZeil}~\cite{paule-schorn-1995} gives the recurrence
\begin{multline}
\label{iden-recur}
2(i+1)(2n+1-i)(2n+1-2i) F'_{n,i} + (3n+2-i)(n-1-i)(n+2+i) F'_{n,i+1} \\
= (-1)^n \frac{12 (n+1)^2 (n-i)!}{(2n-2i)! (i-n)!}.
\end{multline}
To prove \eqref{iden-recur}, we define the certificate,
\[
R(k,i)=\frac{6 k (i-n) (-2 k+2 n+1) (-k+2 n+2) (-2 i+k+n)}{(-3 k+3 n+2) (i-2 k+n+2)},
\]
and verify that
\begin{multline*}
2(i+1)(i-2 n-1)(2 i-2 n-1)f'_n(k,i) + 
(i-3 n-2)(i-n+1)(i+n+2)f'_n(k,i+1) \\
= \Delta_k \left( f'_n(k,i) R(k,i) \right),
\end{multline*}
where $\Delta_k$ is the forward difference operator defined by $\Delta_k(f(k)) = f(k+1) - f(k)$.

In \eqref{iden-recur}, notice that $1/(i-n)! = 0$ and all other terms on the right hand side are well-defined for $0 \leq i \leq n-1$. Therefore, the right hand side is zero for our region of interest. Because of the factor $(n-1-i)$ in the second term on the left hand side, setting $i=n-1$ also gives $F_{n,n-1} = 0$. (This can also be checked independently using Zeilberger's algorithm.) Then, the recurrence above guarantees that $F_{n,i} = 0$ for $0 \leq i \leq n-1$.
\end{proof}

\begin{proof}[Proof of \cref{thm:sumg}]
First, note that
\begin{multline}
\label{jsum}
\sum_{j=0}^{n+i-2k}  \binom{2n+1-i-k}{j} + \sum_{j=0}^{n+k-2i} \binom{2n+1-i-k}{j} \\
= \sum_{j=0}^{n+i-2k}  \binom{2n+1-i-k}{j} + \sum_{j=0}^{n+k-2i} \binom{2n+1-i-k}{2n+1-i-k-j} \\
= \sum_{j=0}^{n+i-2k}  \binom{2n+1-i-k}{j} + \sum_{j=n+1-2k+i}^{2n+1-i-k} \binom{2n+1-i-k}{j} \\ 
= \sum_{j=0}^{2n+1-i-k}  \binom{2n+1-i-k}{j} = 2^{2n+1-i-k}.
\end{multline}
Using \eqref{jsum}, add the left hand sides of \eqref{id1} and \eqref{id2} to get
\begin{multline*}
\sum_{k=0}^n \frac{1}{2^{n-k}} f_n(k) 
\left( \sum_{j=0}^{n+i-2k}  \binom{2n+1-i-k}{j} + \sum_{j=0}^{n+k-2i} \binom{2n+1-i-k}{j} \right) \\
= \sum_{k=0}^n \frac{1}{2^{n-k}} f_n(k)  2^{2n+1-i-k} = 2^{n-i} (1 + (-1)^n),
\end{multline*}
where we have used \cref{thm:sumf} in the last step.
So we have shown that the sum of the left hand sides is what we want.

We now prove \eqref{id1}. 
Our first task is to prove $G_{n,i} = 2G_{n,i+1}$.
Using Wegschaider's algorithm~\cite{wegschaider-1997} in the \texttt{Mathematica} package {\em MultiSum}, we get the recurrence 
\begin{equation}
\label{recur}
g_{n,i}(k,j) - 2 g_{n,i+1}(k,j) = \Delta_j \left[
g_{n,i+1}(k,j) - g_{n,i}(k,j) \right],
\end{equation}
where $\Delta_j$ is the forward difference operator defined above.
This is easily verified by a computation. Now, we perform the $j$-sum on both sides of \eqref{recur}. Since the right hand side telescopes, 
we obtain
\begin{align*}
&\sum_{j=0}^{n+i-2k} \left( g_{n,i}(k,j) - 2 g_{n,i+1}(k,j) \right) \\
&= g_{n,i+1}(k,n+i+1-2k) - g_{n,i}(k,n+i+1-2k) \\
&= (-1)^{k+1} \frac{(3n-3k+2)(n+k+1)! (2n-k+1)! (2n-i-k)!}{k! (3n-k+2)! (n+i-2k)! (n+k-2i)!} C_{n-k}.
\end{align*}
We perform the $k$-sum on both sides of the above equation and compensate for the $j=n+i+1-2k$ term to obtain that $G_{n,i} - 2G_{n,i+1}$ is equal to
\begin{align*}
& \sum_{k=0}^n (-1)^{k+1}  \frac{(3n-3k+2)(n+k+1)! (2n-k+1)! (2n-i-k)!}{k! (3n-k+2)! (n+i-2k)! (n+k-2i)!} C_{n-k}  \\
& - 2\sum_{k=0}^n (-1)^{k} \frac{(3n-3k+2)(n+k+1)! (2n-k+1)! (2n-i-k)!}{k! (3n-k+2)! (n+i-2k+1)! (n+k-2i-1)!} C_{n-k} \\
& = \sum_{k=0}^n f'_{n,i}(k),
\end{align*}
where $f'$ is defined above. Now, using \cref{thm:sumf'}, we obtain that this is equal to $0$.

All that remains to be done is to prove that $G_{n,n} = 1$.
But this is easily performed since the $j$-sum becomes
\[
\sum_{j=0}^{2n-2k} \binom{n-k+1}{j} = 2^{n-k+1},
\]
and we now use \cref{thm:sumf}.
This completes the proof.
\end{proof}

\section{Boundary recurrences} 
\label{sec:additionrec}

Recall that $g_n^{\mathbf{b}}$ is given in \eqref{eq:Kinvboundarycount}
and define, for $0 \leq i<j \leq 2n-1$,
\begin{equation}
	g_n(i,j)=|K^{-1}_n((i,i),(j,j))|=-K^{-1}_n((i,i),(j,j))
	=\frac{Z_n^{\{(i,i),(j,j)\}}}{Z_n},
\end{equation}
with $g_n(j,i)=-g_n(i,j)$ and $g_n(i,i)=0$.  The relevant signs given above are evaluated by using a similar argument given in \cref{lem:signs}; we omit this computation.  

We obtain some formulas immediately.

\begin{lem}\label{lem:immediateg}
	We have that for $2 \leq j \leq 2n-1$,
	\begin{equation}
		g_n^{\mathbf{b}}(1)=\frac{Z_{n-1}}{Z_n},
	\end{equation}
\begin{equation}
	g_n(1,j)=\frac{Z_{n-1}}{Z_n} g_{n-1}^{\mathbf{b}}(j-2),
	\end{equation}
	and $g_n^{\mathbf{b}}(0)=1$.
\end{lem}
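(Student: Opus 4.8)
The plan is to derive the two ``diagonal'' identities combinatorially, by a forcing argument that reduces the deletion of $(1,1)$ to the deletion of $\mathbf a=(0,2n+1)$ already analysed in the proof of \cref{lem:condensation}, and to obtain $g_n^{\mathbf b}(0)=1$ directly from the closed form of \cref{thm:vertexb}. For $g_n^{\mathbf b}(0)$ I would simply evaluate the boundary formula: by \eqref{eq:Kinvboundarycount} we have $g_n^{\mathbf b}(0)=|K_n^{-1}((0,0),\mathbf b)|$, and \eqref{Knb0} with $(i_1,i_2)=(0,0)$ gives $K_n^{-1}((0,0),\mathbf b)=h_n^{0,\mathbf b}(0)$. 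In \eqref{hnb0} the constant term is $-[1]_2=-1$, while each summand of the integral carries the factor $(1+r)^{n-k}r^{2k}/(1-r)$, which is regular at $r=0$ for every $k\ge 0$; hence all residues vanish, $h_n^{0,\mathbf b}(0)=-1$, and $g_n^{\mathbf b}(0)=1$.

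The heart of the argument is the following forcing claim: deleting the single vertex $(1,1)$ from $G_n$ forces, via \cref{rem:pendant}, the matching on \emph{all} vertices with $x_1\in\{0,1\}$, leaving precisely the induced subgraph on $\{x_1\ge 2\}$, which is the same graph produced by deleting $\mathbf a$ in the proof of \cref{lem:condensation}. Once $(1,1)$ is gone, $(0,0)$ is a leaf, and since each even vertex $(0,2k)$ with $k\ge 1$ has only its two vertical neighbours, the edges $((0,2k),(0,2k+1))$ are forced in turn for $k=0,1,\dots,n$, exhausting the column $x_1=0$ and in particular matching $\mathbf a=(0,2n+1)$ to $(0,2n)$. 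This severs $(1,2n+1)$ from column $0$, making it a leaf; the resulting downward cascade forces $((1,2k),(1,2k+1))$ for $k=n,n-1,\dots,1$ and exhausts the column $x_1=1$. Since deleting $\mathbf a$ likewise consumes exactly the columns $x_1\in\{0,1\}$, we conclude $Z_n^{\{(1,1),w\}}=Z_n^{\{\mathbf a,w\}}$ for every vertex $w$ with first coordinate at least $2$.

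The two remaining identities then follow. Taking $w=\mathbf b$ (whose first coordinate is $2n\ge 2$) and using $Z_n^{\{\mathbf a,\mathbf b\}}=Z_{n-1}$ from the proof of \cref{lem:condensation} gives $Z_n^{\{(1,1),\mathbf b\}}=Z_{n-1}$, that is $g_n^{\mathbf b}(1)=Z_{n-1}/Z_n$. Taking $w=(j,j)$ with $2\le j\le 2n-1$, the same reduction identifies the $\{x_1\ge 2\}$ graph with $G_{n-1}$ (with $\mathbf b$ deleted), via the shift by $(2,2)$ that sends the diagonal vertex $(j,j)$ to $(j-2,j-2)$; hence $Z_n^{\{(1,1),(j,j)\}}=Z_n^{\{\mathbf a,(j,j)\}}=Z_{n-1}^{\{(j-2,j-2),\mathbf b\}}$, and dividing by $Z_n$ yields $g_n(1,j)=\tfrac{Z_{n-1}}{Z_n}\,g_{n-1}^{\mathbf b}(j-2)$. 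Working throughout with the partition-function ratios $Z_n^{\{\cdots\}}/Z_n$ avoids any sign issues, since these are manifestly the quantities $g_n^{\mathbf b}$ and $g_n$.

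The step requiring the most care is the forcing claim, specifically its second cascade: $(1,2n+1)$ is \emph{not} a leaf of $G_n\setminus\{(1,1)\}$, and becomes forced only after the column-$0$ cascade has matched $\mathbf a$ downward. One must verify that the two cascades are compatible and together use up the columns $x_1=0,1$ exactly, and then track the parity-dependent bookkeeping (as in \cref{lem:condensation}) ensuring that the shifted top-right vertex plays the role of $\mathbf b$, so that the reduced graph really is $G_{n-1}$ with $\mathbf b$ removed.
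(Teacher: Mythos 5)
Your proof is correct. For the two identities involving $(1,1)$, your argument is essentially the paper's own: the paper also deletes $(1,1)$ and observes (via its \cref{fig:forced1}) exactly the two forcing cascades you describe --- column $0$ pairs up as $((0,2k),(0,2k+1))$, which in turn forces $((1,2k),(1,2k+1))$ from the top down --- reducing everything to the already-established identities $Z_n^{\{\mathbf{a},\mathbf{b}\}}=Z_{n-1}$ and $Z_n^{\{\mathbf{a},(j,j)\}}=Z_{n-1}^{\{(j-2,j-2),\mathbf{b}\}}$ from the proof of \cref{lem:condensation}; your cascade analysis checks out (the parities of the horizontal edges are exactly as you need, so each vertex you claim is forced really has a unique available neighbour). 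Where you genuinely diverge is the third identity $g_n^{\mathbf{b}}(0)=1$: you evaluate the closed-form expression of \cref{thm:vertexb}, noting that for $i=0$ every integrand $(1+r)^{n-k}r^{2k}/(1-r)$ is regular at the origin, so $h_n^{0,\mathbf{b}}(0)=-1$; this is legitimate and non-circular, since \cref{thm:vertexb} is proved before \cref{lem:immediateg} and independently of it. The paper instead gives a purely combinatorial, weight-preserving bijection: strip from any perfect matching of $G_n$ all dimers incident to the diagonal $\{(i,i)\}$ and observe there is a unique way to re-complete the configuration both on $\mathtt{V}_n$ and on $\mathtt{V}_n\backslash\{(0,0),\mathbf{b}\}$, giving $Z_n=Z_n^{\{(0,0),\mathbf{b}\}}$ directly. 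The paper's route is self-contained and reusable (the same diagonal-stripping idea reappears in \cref{prop:lastclaim}), while yours buys brevity at the cost of invoking the heavy analytic machinery of \cref{thm:vertexb} for a statement that has a one-line combinatorial cause.
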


\begin{proof}
	The first two equations are immediate from writing the definitions of $g_n^{\mathbf{b}}(1)$ and $g_n(1,j)$ as a ratio of partition functions, see also~\eqref{eq:Kinvboundarycount}, and removing the vertex $(1,1)$ from the graph forces edges to be covered by dimers; see \cref{fig:forced1}.

\begin{figure}[htbp!]
\begin{center}
	\includegraphics[height=5cm]{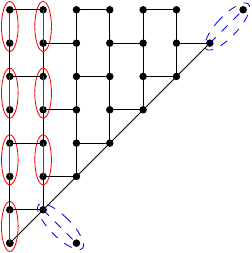}
	\caption{ The blue dashed dimers represents the removal of the vertices $(1,1)$ and $\mathbf{b}$ from the graph. The red dimers are those that are induced when the vertex $(1,1)$ is removed.  }
	\label{fig:forced1}
\end{center}
\end{figure}

	For the last equation, for each dimer configuration of $Z_n$ on $\mathtt{V}_n$, remove all dimers incident to the vertices $\{(i,i), 0 \leq i \leq 2n-1 \}$.  For each of these dimer configurations, there is a unique way to extend to a dimer configuration on $\mathtt{V}_n$ and $\mathtt{V}_n\backslash \{(0,0),\mathbf{b} \}$.  This shows that $Z_n=Z_n^{\{(0,0),\mathbf{b}\}}$ as required.  
\end{proof}

We then have the following system of equations for $g_n(i,j)$ and $g_n^\mathbf{b}(j)$:

\begin{thm}
\label{thm:boundaryrec2}
For $0 \leq i < j \leq 2n-1$, we have the recurrence
	\begin{equation}\label{eq:gnij}
	g_n(i,j)= \begin{cases}
		1-g_n^{\mathbf{b}} (j) & \mbox{if }i=0, j>0, \\[0.2cm]
		\ds \frac{Z_{n-1}}{Z_n}g_{n-1}^\mathbf{b} (j-2) & \mbox{if }i=1,j>1, \\[0.2cm]
		g_{n-1}(i-2,j-2) - g_n^{\mathbf{b}}(j) g_{n-1}^{\mathbf{b}} (i-2) + 	g_n^{\mathbf{b}}(i) g_{n-1}^{\mathbf{b}} (j-2) & \mbox{if }i,j \geq 2.
	\end{cases}
	\end{equation}

\end{thm}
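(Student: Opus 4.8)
The plan is to verify the three cases of \eqref{eq:gnij} separately, treating each as an identity among the partition functions and freely using $g_n(i,j)=Z_n^{\{(i,i),(j,j)\}}/Z_n$ and $g_n^{\mathbf{b}}(j)=Z_n^{\{(j,j),\mathbf{b}\}}/Z_n$ from \eqref{eq:Kinvboundarycount}. The case $i=1$ requires no work: it is exactly the second formula in \cref{lem:immediateg}. The case $i=0$ is the clean identity
\[
Z_n^{\{(0,0),(j,j)\}} + Z_n^{\{(j,j),\mathbf{b}\}} = Z_n ,
\]
and the case $i,j\geq 2$, where the recursive shift $(i,j)\mapsto(i-2,j-2)$ appears, I expect to obtain from graphical condensation.

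For $i,j\geq 2$ I would apply \cref{prop:Kuo} on $G_n$ to the four boundary vertices $\mathbf{a}=(0,2n+1)$, $(i,i)$, $(j,j)$, $\mathbf{b}$, which lie in the cyclic order $\mathbf{a},\mathbf{b},(j,j),(i,i)$ on the outer face (up the left edge, across the top to $\mathbf{b}$, then down the diagonal). This gives
\[
Z_n\,Z_n^{\{\mathbf{a},\mathbf{b},(j,j),(i,i)\}} + Z_n^{\{\mathbf{a},(j,j)\}}Z_n^{\{\mathbf{b},(i,i)\}} = Z_n^{\{\mathbf{a},\mathbf{b}\}}Z_n^{\{(j,j),(i,i)\}} + Z_n^{\{\mathbf{a},(i,i)\}}Z_n^{\{\mathbf{b},(j,j)\}} .
\]
I would then invoke the reduction already established in \cref{lem:condensation}: deleting $\mathbf{a}$ forces the edges of the two leftmost columns and the diagonal edge $((0,0),(1,1))$ by \cref{rem:pendant}, identifying $G_n\setminus\{\mathbf{a}\}$ with $G_{n-1}\setminus\{\mathbf{b}\}$ under $(x_1,x_2)\mapsto(x_1-2,x_2-2)$, and in particular $Z_n^{\{\mathbf{a},\mathbf{b}\}}=Z_{n-1}$. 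Applying this to every term containing $\mathbf{a}$ (so $(i,i)\mapsto(i-2,i-2)$ and $(j,j)\mapsto(j-2,j-2)$, while in the four-vertex term the image of $\mathbf{b}$ pairs off as a forced leaf and drops out) converts the display into
\[
Z_{n-1}\,Z_n^{\{(i,i),(j,j)\}} = Z_n\,Z_{n-1}^{\{(i-2,i-2),(j-2,j-2)\}} - Z_n^{\{(j,j),\mathbf{b}\}}\,Z_{n-1}^{\{(i-2,i-2),\mathbf{b}\}} + Z_n^{\{(i,i),\mathbf{b}\}}\,Z_{n-1}^{\{(j-2,j-2),\mathbf{b}\}} ,
\]
and dividing by $Z_nZ_{n-1}$ yields the third line of \eqref{eq:gnij}. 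The only delicate point is the bookkeeping of $\mathbf{b}$ under the shift, since the parity of $\mathbf{b}=(2n,2n+1-[n]_2)$ changes with $n$; I would verify, exactly as in \cref{lem:condensation}, that the shifted image of $\mathbf{b}$ always becomes a forced leaf and so contributes trivially to the count.

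For $i=0$ I would prove $Z_n^{\{(0,0),(j,j)\}}+Z_n^{\{(j,j),\mathbf{b}\}}=Z_n$ by the diagonal re-matching bijection already used for $g_n^{\mathbf{b}}(0)=1$ in \cref{lem:immediateg} and for \cref{prop:lastclaim}: remove from a configuration all dimers incident to the diagonal vertices $\{(m,m):0\le m\le 2n-1\}$ and re-match the resulting diagonal ladder (the vertices $(m,m)$ and $(m,m+1)$ together with the edges among them). Deleting $(j,j)$ cuts this ladder into a lower segment adjacent to $(0,0)$ and an upper segment adjacent to $\mathbf{b}$, and a parity count on the two segments should force exactly one of the corners $(0,0)$ or $\mathbf{b}$ to remain unmatched; the two alternatives are in bijection with the configurations counted by $Z_n^{\{(0,0),(j,j)\}}$ and $Z_n^{\{(j,j),\mathbf{b}\}}$, whose union is in bijection with all configurations counted by $Z_n$. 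I expect this to be the main obstacle, since it demands an explicit description of the forced-edge structure of the ladder rather than a black-box condensation; the minus sign in \eqref{eq:gnij} is then purely structural, coming from the condensation identity, while the definitional absolute values in \eqref{eq:Kinvboundarycount} keep all the $g$'s nonnegative.
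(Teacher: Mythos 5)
Your proposal is correct, and for two of the three cases it is essentially the paper's own argument: the case $i=1$ is precisely the second identity of \cref{lem:immediateg}, and for $i,j\geq 2$ you apply \cref{prop:Kuo} to $\mathbf{a},(i,i),(j,j),\mathbf{b}$ and reduce every term containing $\mathbf{a}$ via the forced two-column shift, exactly as in the paper (your display is \eqref{eq:Kuobottom} written with the cyclic order reversed, which is the same identity, and the pendant bookkeeping for the image of $\mathbf{b}$ that you flag is elided there in the same way). The genuine divergence is at $i=0$: the paper does not argue bijectively there, but applies \eqref{eq:Kuobottom} once more with $i=0$ to obtain $g_n(0,j)+g_n^{\mathbf{b}}(j)=g_{n-1}(0,j-2)+g_{n-1}^{\mathbf{b}}(j-2)$, and then descends in $n$ to the base cases $j=0$ (value $1$ by \cref{lem:immediateg}) and $j=1$ (value $1$ by a further condensation using $Z_n^{\{\mathbf{a},(1,1)\}}=0$). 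Your alternative instead promotes the diagonal re-matching argument that the paper uses only for $g_n^{\mathbf{b}}(0)=1$ and \cref{prop:lastclaim}, and the step you flag as the main obstacle does close: the strip spanned by the vertices $(m,m)$ and $(m,m+1)$ has no simple cycles other than triangles, hence no alternating cycles, so any induced subgraph of it admits at most one perfect matching and uniqueness of the re-matching is automatic; moreover, once one fixes the set of strip vertices matched into the bulk, matchability of the lower piece (with or without $(0,0)$) and of the upper piece (with or without $\mathbf{b}$) is each equivalent to a parity condition on the number of bulk-matched strip vertices that piece contains, and since the two parities must add to a fixed total parity, exactly one of the two completions exists, which is exactly your dichotomy. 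Comparing the routes: the paper's stays entirely inside the condensation formalism and needs no structural analysis of the diagonal, at the cost of an induction with two base cases; yours is self-contained, avoids the induction, and treats the identity $Z_n^{\{(0,0),(j,j)\}}+Z_n^{\{(j,j),\mathbf{b}\}}=Z_n$ on the same footing as $Z_n=Z_n^{\{(0,0),\mathbf{b}\}}$, making explicit the free-boundary structure of the diagonal that underlies all of these sum rules.
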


\begin{thm}
\label{thm:boundaryrec1}
For $0 \leq j \leq 2n-1$, we have the recurrence
	\begin{equation}\label{eq:gbj}
g_n^\mathbf{b}(j)=\begin{cases}
			1 & \mbox{if }j=0, \\[0.2cm]
			\ds \frac{Z_{n-1}}{Z_n} & \mbox{if }j=1,   \\[0.2cm]
		\ds\frac{Z_{n-1}}{Z_n}\left( 1-g_{n-1}^\mathbf{b} (j-2) + \sum_{r=0}^{n-1} (-1)^r \binom{n+1}{r+2} g_{n-1}(r,j-2)  \right)  & \mbox{if }j\geq 2.
	\end{cases}
	\end{equation}
\end{thm}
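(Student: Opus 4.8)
The cases $j=0$ and $j=1$ of \eqref{eq:gbj} carry no new content: they are exactly the identities $g_n^{\mathbf{b}}(0)=1$ and $g_n^{\mathbf{b}}(1)=Z_{n-1}/Z_n$ proved in \cref{lem:immediateg}. Hence the whole problem is the case $j\ge 2$, and the plan is to reduce the size-$n$ model to the size-$(n-1)$ model while tracking the two deleted vertices $(j,j)$ and $\mathbf{b}$. Writing $g_n^{\mathbf{b}}(j)=Z_n^{\{(j,j),\mathbf{b}\}}/Z_n$, I would reuse the corner device from the proof of \cref{lem:condensation}: introduce the top-left vertex $\mathbf{a}=(0,2n+1)$, whose deletion forces the edge $((0,0),(1,1))$ together with the vertical edges in the two leftmost columns by \cref{rem:pendant}. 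This collapses $G_n\setminus\{\mathbf{a}\}$ onto $G_{n-1}$ under the shift $(x_1,x_2)\mapsto(x_1-2,x_2-2)$, carrying a diagonal vertex $(j,j)$ with $j\ge 2$ to $(j-2,j-2)$ and giving $Z_n^{\{\mathbf{a},\mathbf{b}\}}=Z_{n-1}$. These are exactly the ingredients needed to generate the prefactor $Z_{n-1}/Z_n$ and the shift $j\mapsto j-2$.

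Next I would look for a graphical-condensation identity (\cref{prop:Kuo}) on the outer boundary face involving $\mathbf{a}$, $\mathbf{b}$, the diagonal vertex $(j,j)$, and one further boundary vertex, chosen so that in the resulting four-term bilinear relation every factor except $Z_n^{\{(j,j),\mathbf{b}\}}$ simplifies, under the forcing moves above and the relation $Z_n^{\{(0,0),\mathbf{b}\}}=Z_n$ from \cref{lem:immediateg}, to a size-$(n-1)$ partition function (e.g.\ $Z_n^{\{(j,j),\mathbf{a}\}}=Z_{n-1}^{\{(j-2,j-2),\mathbf{b}\}}$ by the same argument applied to a diagonal vertex). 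This is the mechanism that should output the structural part of \eqref{eq:gbj}, namely the term $1-g_{n-1}^{\mathbf{b}}(j-2)$; I note that by the $i=0$ line of \cref{thm:boundaryrec2} this term equals $g_{n-1}(0,j-2)$, so that \eqref{eq:gbj} can be rewritten as a single signed sum $\frac{Z_{n-1}}{Z_n}\sum_{r\ge0}c_{n,r}\,g_{n-1}(r,j-2)$ with $c_{n,0}=1+\binom{n+1}{2}$ and $c_{n,r}=(-1)^r\binom{n+1}{r+2}$ for $r\ge1$.

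The genuinely new input, and the step I expect to be the main obstacle, is the weighted sum $\sum_{r}(-1)^r\binom{n+1}{r+2}\,g_{n-1}(r,j-2)$, since condensation is bilinear and cannot by itself produce a sum over all diagonal positions $r$. Its source should be the signed enumeration of the matchings of the triangular column that distinguishes $G_n$ from $T_n$ (of size of order $n+1$) once the corner $\mathbf{b}$ is removed: summing over the diagonal vertex $(r,r)$ at which the residual unmatched site re-enters $G_{n-1}$ traces a monotone lattice path through the triangular ladder, whose count is a binomial coefficient while the Kasteleyn orientation supplies the sign $(-1)^r$. Making this precise --- pinning the exact binomial $\binom{n+1}{r+2}$, controlling the Kasteleyn signs across the corner, and checking the $r=0$ boundary term --- is the crux, and I would isolate it as a standalone combinatorial lemma; should a direct transfer-matrix argument along the ladder prove awkward, it can be settled by the Zeilberger-type methods of \cref{sec:identities}. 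As an independent verification (or an alternative route bypassing the corner enumeration), one may substitute the closed form $g_n^{\mathbf{b}}(j)=(-1)^{j+1}h_n^{0,\mathbf{b}}(j)$ read off from \eqref{Knb0} together with $g_{n-1}(r,j-2)$ from \cref{thm:main}, reducing \eqref{eq:gbj} to a hypergeometric summation of the kind treated in \cref{sec:identities}.
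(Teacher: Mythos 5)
Your handling of $j=0,1$ is fine (these are indeed \cref{lem:immediateg}), and your structural diagnosis is correct and worth something: any Kuo identity on $\{\mathbf{a},(i,i),(j,j),\mathbf{b}\}$ is bilinear and, exactly as in \cref{thm:boundaryrec2}, always couples $g_n^{\mathbf{b}}(j)$ to further size-$n$ unknowns $g_n(i,j)$, $g_n^{\mathbf{b}}(i)$; even the most favourable choice $i=0$ collapses only to the relation $g_n(0,j)+g_n^{\mathbf{b}}(j)=1$, so condensation can never isolate $g_n^{\mathbf{b}}(j)$. But that means the entire content of the theorem is the weighted sum $\sum_{r}(-1)^r\binom{n+1}{r+2}g_{n-1}(r,j-2)$, and this is precisely the step you leave unproven: you conjecture it comes from a signed enumeration of matchings of the triangular column (the diagonal $\{(r,r)\}$), with the Kasteleyn orientation supplying $(-1)^r$, and you defer this to a hypothetical standalone lemma. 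That is a genuine gap, not a routine verification, and the conjectured mechanism does not match how the identity actually arises, so I do not believe it can be completed as stated.

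What the paper does for $j\geq 2$ is a three-stage argument located in a different part of the graph. First, a cascade of edge contractions and spider moves on the two \emph{leftmost} columns (the graphs $H_0,\dots,H_{n-1}$ of \cref{fig:squarepush,fig:spiderdown,fig:finalmove}, with $Z_{H_{k-1}}=\tfrac{k+1}{k}Z_{H_k}$) yields the weighted recurrence
\begin{equation*}
Z_n^{\{(j,j),\mathbf{b}\}}=Z_{n-1}^{\{(j-2,j-2),(0,0)\}}+\sum_{k=0}^{n-1}(n-k)\,Z_{n-1}^{\{(0,2k),(j-2,j-2)\}},
\end{equation*}
where the coefficients $(n-k)$ are \emph{linear}, produced by the accumulated edge weights $k/(k+1)$ — no binomials and no signs appear at this stage, and the vertices entering the sum lie on the left boundary, not on the diagonal column you point to. Second — the step your sketch has no counterpart for — each left-boundary deletion $(0,2k)$ is converted into diagonal deletions by applying $K.K^{-1}=\mathbbm{I}$ entrywise at size $n-1$, giving $Z_{n-1}^{\{(0,2k),(j-2,j-2)\}}/Z_{n-1}=\sum_{r=0}^{k}\binom{k}{r}(-1)^r\,Z_{n-1}^{\{(r,r),(j-2,j-2)\}}/Z_{n-1}$; the signs $(-1)^r$ are algebraic, coming from this matrix identity, not from the Kasteleyn orientation of the column. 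Third, exchanging sums and using $\sum_{k=r}^{n-1}(n-k)\binom{k}{r}=\binom{n+1}{r+2}$ produces \eqref{eq:gbj}; the term $1-g_{n-1}^{\mathbf{b}}(j-2)$ is simply $g_{n-1}(0,j-2)$ from the first display above rewritten via the first line of \eqref{eq:gnij}, rather than the output of a size-$n$ condensation. Your fallback — substituting $g_n^{\mathbf{b}}(j)=(-1)^{j+1}h_n^{0,\mathbf{b}}(j)$ and the $t_{n-1}^{0,0}$ formula from \cref{thm:main} and verifying a hypergeometric identity — is logically admissible (no circularity), but it is not carried out and would require summing the quadruple contour integrals of \eqref{tn00} against $(-1)^r\binom{n+1}{r+2}$, well beyond the identities of \cref{sec:identities}. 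As it stands, your proposal establishes only the two trivial cases.
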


These two recurrences determine $K^{-1}_n((i,i),(j,j))$ for $0 \leq i,j\leq 2n-1$, and along with the equations $K_n^{-1}.K_n=K_n.K_n^{-1}= \mathbbm{I}$ viewed entrywise fully determines the entries of $K_n^{-1}$.  

To prove these results, we need two local moves for dimers.

\begin{figure}[htbp!]
	\begin{center}
		\includegraphics[height=4cm]{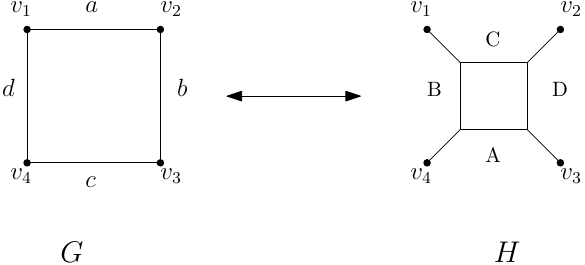}
		\caption{The spider move.} 
		\label{fig:spider}
	\end{center}
\end{figure}

\begin{enumerate}
	\item \emph{Spider move}: Suppose we have a large square with edge weights $a,b,c$ and $d$ (clockwise labelling) on some graph $G$.  This square can be deformed to smaller square with additional edges added between the vertices of the original square and the vertices of the smaller square as shown in \cref{fig:spider} to form a new graph called $H$. If the new edge weights in $H$ around the smaller square, $A,B,C$ and $D$ are related to the old weights in $G$ by $A=a/\Delta$, $B=b/\Delta$, $C=c/\Delta$ and $D=d/\Delta$, where $\Delta = a b +c d$
and the edge weights for the additional edges is 1, then local configurations and weights of matchings are preserved under the transformation from $G$ to $H$. This transformation is called the {\em spider move}~\cite{Pro03}, and we have $Z_G=\Delta Z_H$.

\begin{figure}[htbp!]
	\begin{center}
		\includegraphics[height=1cm]{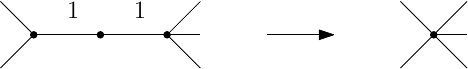}
		\caption{The edge contraction move.} 
		\label{fig:edgecon}
	\end{center}
\end{figure}

	\item \emph{Edge Contraction}: If a vertex is incident to two edges each having weight 1, contract the two incident edges. This does not change the partition function; see \cref{fig:edgecon}.
	
\end{enumerate}

\begin{proof}[Proof of \cref{thm:boundaryrec2}]
	For the purpose of the proof, write $\mathbf{i}$ to be the vertex $(i,i)$ for $1 \leq i \leq 2n-1$ and let $\mathbf{a}$ denote the vertex $(0,2n+1)$.  

	We start with the proof of~\eqref{eq:gnij}.  The first two conditions are immediate by definition. For $0\leq i<j <2n-1$, we apply graphical condensation, similar to~\eqref{eq:Kuo} in the proof of \cref{lem:condensation}, which gives
	\begin{equation}\label{eq:Kuobottom}
	Z_n^{\{\mathbf{a},\mathbf{i},\mathbf{j},\mathbf{b}\}}Z_n = Z_n^{\{\mathbf{a},\mathbf{i}\}}Z_n^{\{\mathbf{j},\mathbf{b}\}}-Z_n^{\{\mathbf{a},\mathbf{j}\}}Z_n^{\{\mathbf{i},\mathbf{b}\}}+Z_n^{\{\mathbf{a},\mathbf{b}\}}Z_n^{\{\mathbf{i},\mathbf{j}\}}.
	\end{equation}
We first consider the above equation when $i \geq 2$. 	
	As in the proof of \cref{lem:condensation}, removing $\mathbf{a}$ freezes off edges; see \cref{fig:Kuo}. This means that $Z_n^{\{\mathbf{a},\mathbf{b}\}}=Z_{n-1}$, $Z_n^{\{\mathbf{a},\mathbf{i\}}}=Z_{n-1}^{\{\mathbf{i-2},\mathbf{b}\}}$ and $Z_n^{\{\mathbf{a},\mathbf{i},\mathbf{j},\mathbf{b}\}}= Z_{n-1}^{\{\mathbf{a},\mathbf{i},\mathbf{j},\mathbf{b}\}}.$  Dividing both sides of~\eqref{eq:Kuobottom} by $Z_{n-1}Z_{n}$ gives the last condition in~\eqref{eq:gnij}.  Next, when $i=0$, notice that removing both $\mathbf{a}$ and $\mathbf{0}$ has no impact on the forced edges, which means that $Z_n^{\{\mathbf{a},\mathbf{0},\mathbf{j},\mathbf{b}\}}=Z_{n-1}^{\{\mathbf{j-2},\mathbf{b}\}}$. Dividing~\eqref{eq:Kuobottom} by $Z_{n-1}Z_n$ in this case gives
	\begin{equation}
		g_{n-1}(0,j-2)=g_n^\mathbf{b}(j)-g_{n-1}^\mathbf{b}(j-2)+g_n(0,b)
	\end{equation}
	and so 
	\begin{equation}\label{eq:diff}
	g_{n-1}(0,j-2)+g_{n-1}^\mathbf{b}(j-2)=g_n^\mathbf{b}(j)+g_n(0,j).
	\end{equation}
	The equality in~\eqref{eq:diff} is equal to 1 when $n$ is even which follows immediately from the last equation in \cref{lem:immediateg}. We now consider~\eqref{eq:diff} when $n$ is odd.  To do so, set $i=0$ and $j=1$ in~\eqref{eq:Kuobottom} to get 
	\begin{equation}
Z_n^{\{\mathbf{a},\mathbf{0},\mathbf{1},\mathbf{b}\}}Z_n = Z_n^{\{\mathbf{a},\mathbf{0}\}}Z_n^{\{\mathbf{1},\mathbf{b}\}}-Z_n^{\{\mathbf{a},\mathbf{1}\}}Z_n^{\{\mathbf{0},\mathbf{b}\}}+Z_n^{\{\mathbf{a},\mathbf{b}\}}Z_n^{\{\mathbf{0},\mathbf{1}\}}.
	\end{equation}
	Notice that due to edges being forced $Z_n^{\{\mathbf{a},\mathbf{0},\mathbf{1},\mathbf{b}\}}=Z_{n-1}$ and $Z_n^{\{\mathbf{a},\mathbf{1}\}}=0$, where the latter follows since the dimer covering of $\mathtt{V}_n \backslash\{\mathbf{a},(1,1)\}$  is zero as the	induced dimers from removing $\mathbf{a}$ from the graph are incompatible with removing $(1,1)$ from the graph; compare ~\cref{fig:Kuo} and~\cref{fig:forced1}. 
		Dividing the above equation by $Z_{n-1}Z_n$ gives
	\begin{equation}
		1=g_n^{\mathbf{b}}(1)+g_n(0,1).
	\end{equation}
	We have shown~\eqref{eq:diff} is equal to 1 when $n$ is odd, which verifies the third condition in~\eqref{eq:gnij}. Finally, when $i=1$ in~\eqref{eq:Kuobottom}, gives
\begin{equation}
Z_n^{\{\mathbf{a},\mathbf{1},\mathbf{j},\mathbf{b}\}}Z_n = Z_n^{\{\mathbf{a},\mathbf{1}\}}Z_n^{\{\mathbf{j},\mathbf{b}\}}-Z_n^{\{\mathbf{a},\mathbf{j}\}}Z_n^{\{\mathbf{1},\mathbf{b}\}}+Z_n^{\{\mathbf{a},\mathbf{b}\}}Z_n^{\{\mathbf{1},\mathbf{j}\}}.
	\end{equation}
	Due to their being no dimer covering of $\mathtt{V}_n \backslash\{\mathbf{a},(1,1)\}$  as mentioned above. The above equation becomes 
	\begin{equation}
0=-Z_n^{\{\mathbf{a},\mathbf{j}\}}Z_n^{\{\mathbf{1},\mathbf{b}\}}+Z_n^{\{\mathbf{a},\mathbf{b}\}}Z_n^{\{\mathbf{1},\mathbf{j}\}}.
	\end{equation}
	Using that $Z_n^{\{\mathbf{a},\mathbf{j}\}}=Z_{n-1}^{\{ \mathbf{j}, \mathbf{b}\}}$ and dividing by $Z_{n-1}Z_n$ and using \cref{lem:immediateg} gives the fourth condition in~\eqref{eq:gnij}.
\end{proof}

	\begin{figure}[htbp!]
		\begin{center}
		\begin{tabular}{cc}
			\includegraphics[height=6cm]{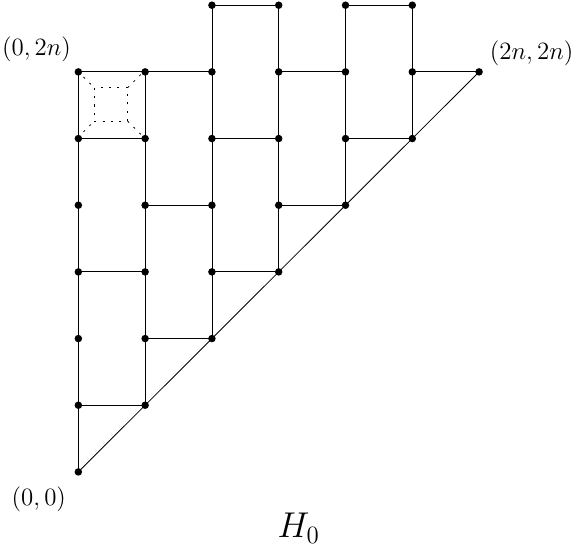} &
			\includegraphics[height=6cm]{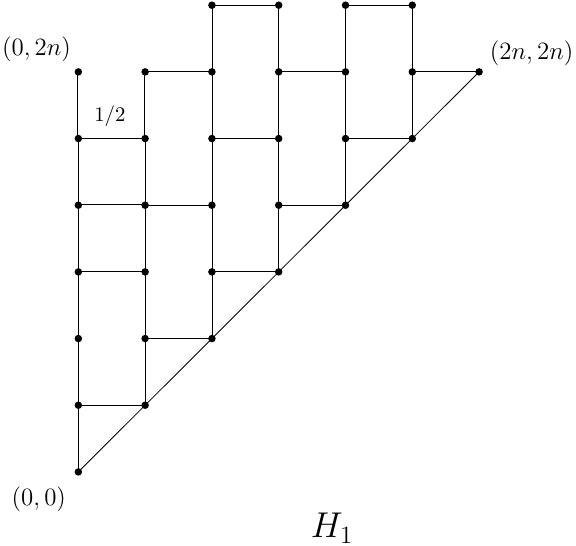}		
		\end{tabular}
			\caption{The left graph $H_0$ is obtained after applying an edge contraction. Now, a square appears where we can apply the spider move. After applying the spider move and contracting edges, we obtain the right graph $H_1$.} 
			\label{fig:squarepush}
		\end{center}
	\end{figure}

\begin{proof}[Proof of \cref{thm:boundaryrec1}]
The first two conditions follow from the third and first conditions in \cref{lem:immediateg}. The last condition in~\eqref{eq:gbj} is more involved and we illustrate the steps to find a recurrence of the partition function first.  
	
We perform an edge contraction on the edges $((0,2n),(0,2n+1))$ and $((0,2n+1),(1,2n+1))$. This means that there is an edge between $(1,2n)$ and $(0,2n)$ as well as a square face with coordinates $(0,2n-1),(1,2n-1),(1,2n)$ and $(0,2n)$; see the left graph in \cref{fig:squarepush}. Label this graph $H_0$.  
To the square above, we apply the spider move and edge contraction on the bottom two edges protruding from the new (smaller) square; see the right figure in \cref{fig:squarepush}. 
	These operations have
	\begin{enumerate}
		\item	deleted the edge $((0,2n),(1,2n))$ on $H_0$, 
	\item added an edge between $((0,2n-2), (1,2n-2))$ on $H_0$,
	\item changed the edge weights on the edges $((0,2n-2),(1,2n-2))$, $((1,2n-2),(1,2n-1))$, $((0,2n-1),(1,2n-1))$ and $((1,2n-2),(1,2n-1))$ to 1/2 on $H_0$. 
	\end{enumerate}
Label this new graph $H_1$. This operation gives 
	\begin{equation}\label{eq:sqmvonce}
	Z_{H_0}=2 Z_{H_1}. 
	\end{equation}

		We now proceed iteratively and describe the step from $H_{k-1}$ to $H_{k}$ from applying the square move for $2 \leq k \leq n-1$.  On the graph $H_{k-1}$, we apply the spider move on the square face whose center is given by $(1/2,2n-2k+3/2)$ and applying edge contraction on the two bottom edges protruding from the new (smaller) square; see \cref{fig:spiderdown}. 

\begin{figure}[htbp!]
		\begin{center}
			\includegraphics[height=6cm]{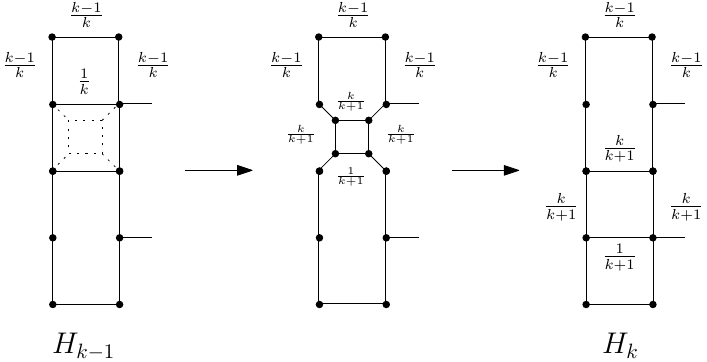}
			\caption{The local moves taking $H_{k-1}$ to $H_k$. In each of the three graphs, the top left vertex is $(0,2n-2k+3)$.
All unlabelled edges have weight $1$.} 
			\label{fig:spiderdown}
		\end{center}
\end{figure}

		We call this new graph $H_{k}$. These operations have
\begin{enumerate}
	\item	deleted the edge $((0,2n-2k+2),(1,2n-2k+2))$ on $H_{k-1}$, 
	\item added an edge between $((0,2n-2k), (1,2n-2k))$ on $H_{k-1}$,
	\item changed the edge weights of the edges, $((1,2n-2k),(1,2n+1-2k))$, $((0,2n-2k+1),(1,2n-2k+1))$ and $((1,2n-2k),(1,2n-2k+1))$ to $k/(k+1)$ and changed the edge weight of the edge $((0,2n-2k),(1,2n-2k))$ to $1/(k+1)$ in $H_{k-1}$.
	\end{enumerate}
	Label this new graph $H_k$. This operation gives 
	\begin{equation}\label{eq:sqmvktimes}
		Z_{H_{k-1}}=\frac{k+1}{k} Z_{H_k}.
	\end{equation}

	From~\eqref{eq:sqmvonce} and~\eqref{eq:sqmvktimes} we have
	\begin{equation} \label{eq:aftersquares}
		Z_n = n Z_{H_{n-1}}
	\end{equation}
	From the above operations, the edges $((1,2n-2k),(1,2n-2k+1))$ have weight $\frac{k}{k+1}$ for $1 \leq k \leq n-1$. 
	Since the graph $H_{n-1}$ contains a pendant edge $((0,2n-1),(0,2n))$, this can be removed inducing another pendant edge. Iteratively removing these pendant edges $((0,2n-1-2k),(0,2n-2k))$ for $ 1 \leq k \leq n-2$ from $H_{n-1}$ leaves us with the graph in \cref{fig:finalmove}.

\begin{figure}[htbp!]
		\begin{center}
			\includegraphics[height=6cm]{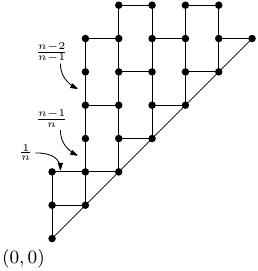}
\caption{The situation after removing pendant edges from  $H_{n-1}$. All unlabelled edges have weight $1$.} 
			\label{fig:finalmove}
		\end{center}
\end{figure}

For the vertex $\mathbf{0} = (0,0)$, either there is a dimer on the edge $(\mathbf{0},(0,1))$ or there is a dimer on the edge $(\mathbf{0},(1,1))$.  For the former, this forces a dimer on the edge $((0,2),(1,2))$ (which has weight $1/n$) and forces dimers on the edges $((1,1),(2,2))$ and $((1,2k+1),(1,2k+2))$ for $1\leq k \leq n-1$, leaving us with the graph with vertices $V_{n-1} \backslash \{\mathbf{0},\mathbf{b}\}$. It follows from \cref{lem:immediateg} that the number of dimer covers on this graph is  $Z_{n-1}$. For the latter, this induces a dimer on the edge $((0,1),(0,2)$.  In this case, only one of the edges $((1,2+2k),(2,2+2k))$ for $0 \leq k \leq n-1$ can be covered by a dimer and since the edges $((1,2n-2k),(1,2n-2k+1))$ have weight $\frac{k}{k+1}$ for $1 \leq k \leq n-1$, we find using \eqref{eq:aftersquares} that 
	\begin{equation} \label{eq:partitionrecurrence}
		Z_n =  Z_{n-1} + \sum_{k=0}^{n-1} (n-k) Z_{n-1}^{\{\mathbf{0},(0,2k+1)\}}.
	\end{equation}

We can now prove the third equation in~\eqref{eq:gbj}.
To so, we consider the same steps that lead to \eqref{eq:partitionrecurrence} but replace the graph of $\mathtt{V}_n$ by the induced graph of $\mathtt{V}_n$ with the vertices $(j,j)$ and $\mathbf{b}$ for $j \geq 2$ removed.  We then arrive at 
\begin{equation} \label{eq:partitionrecurrence2}
	Z_n^{\{\mathbf{j},\mathbf{b}\}} =  Z_{n-1}^{\{\mathbf{j-2},\mathbf{0}\}} + \sum_{k=0}^{n-1} (n-k) Z_{n-1}^{\{(0,2k),\mathbf{j}\}}.
\end{equation}
Dividing the above equation by $Z_n$ and using the first condition in~\eqref{eq:gnij} gives
\begin{equation}
	g_n^{\mathbf{b}} (j)= \frac{Z_{n-1}}{Z_n} \left( 1- g_{n-1}^{\mathbf{b}}(j-2)\right)  + \frac{Z_{n-1}}{Z_n}\sum_{k=0}^{n-1} (n-k) \frac{Z_{n-1}^{\{(0,2k),\mathbf{j-2}\}}}{Z_{n-1}}.
\end{equation}
Since the vertex $(0,2k)$ for $0\leq k \leq n-1$ is on the leftmost boundary, this can be moved to the boundary $\{(j,j): 0 \leq j \leq 2n-1\}$ by applying the matrix equation $K_n.K_n^{-1} = \mathbbm{I}$ entrywise. This gives 
\begin{equation}
	g_n^{\mathbf{b}} (j)= \frac{Z_{n-1}}{Z_n} \left( 1- g_{n-1}^{\mathbf{b}}(j-2)\right)  + \frac{Z_{n-1}}{Z_n}\sum_{k=0}^{n-1} (n-k)\sum_{r=0}^k \binom{k}{r} (-1)^r \frac{Z_{n-1}^{\{\mathbf{r},\mathbf{j-2}\}}}{Z_{n-1}}.
\end{equation}
Rearranging the sums and evaluating the $k$-sum gives the final equation in~\eqref{eq:gbj}.
\end{proof} 

We now prove the sum rule.

\begin{proof}[Proof of \cref{thm:gnb formula}]
Substituting \eqref{Knb0} and \eqref{hnb0} into~\eqref{eq:Kinvboundarycount}, we have that 
\begin{equation}
\begin{split}
\sum_{j=0}^{2n} g_n^{\mathbf{b}}(j)&=\sum_{j=0}^{2n} (-1)^{j+1} K_n^{-1}((j,j),\mathbf{b})=\sum_{j=0}^{2n} (-1)^{j+1}h_n^{0,
\mathbf{b}}(j)\\
&= \sum_{j=0}^{2n}-(-1)^{j+1} [j+1]_2 +\sum_{j=0}^{2n}(-1)^{j+1} \sum_{k=0}^n p(n,k,0) \frac{1}{2\pi \mathrm{i}} \int_{
\Gamma_0} \text{d}r \frac{ (1+r)^{n-k}}{(1-r) r^{j-2k}}.
\end{split}
\end{equation}
Both sums in $j$ can be evaluated on the right side of the above equation giving
\begin{equation}
\sum_{j=0}^{2n} g_n^{\mathbf{b}}(j)=n+1- \sum_{k=0}^n p(n,k,0) \frac{1}{2\pi \mathrm{i}} \int_{
\Gamma_0} \text{d}r \frac{ (1+r)^{n-k}}{(1-r)(1+r) r^{2n-2k}}.
\end{equation}
where we have only kept the term with a residue contribution at $r=0$ when evaluating the geometric sum. Notice that the second term on the right side of the above equation does not have a residue contribution at $r=0$ when $k=n$ and so we have
\begin{equation}
\begin{split}
\sum_{j=0}^{2n} g_n^{\mathbf{b}}(j)&=n+1- \sum_{k=0}^{n-1} p(n,k,0) \frac{1}{2\pi \mathrm{i}} \int_{
\Gamma_0} \text{d}r \frac{ (1+r)^{n-k-1}}{(1-r)r^{2n-2k}}\\
&=n+1-\frac{1}{2} \sum_{k=0}^{n-1} p(n,k,0) 2^{n-k},
\end{split}
\end{equation}
where the last line follows from pushing the contour through $\infty$ picking up the residue at $r=1$.  The result then follows from \cref{thm:sumf} and from the fact that $p(n,n,0)=(-1)^n$.
\end{proof}

\section{Heuristics for the limit Shape} 
\label{sec:conjlimitshape}

To obtain the conjectured limit shape formula, we only consider the asymptotics of $K_n^{-1}((x_1,x_2),\mathbf{b})$ for $x_2 \in 2\mathbb{Z}+1$ for $(x_1,x_2)$ rescaled as given in \cref{conj:limitshape}.  Strictly speaking, this term does not contain any probabilistic information but we expect to see a similar structure when analyzing other entries of the inverse Kasteleyn matrix when both terms are close to the limit shape curves.  
To find these asymptotics, we express the formula for $K^{-1}_n((x_1,x_2),\mathbf{b})$ as a single contour integral and apply the method of steepest descent.  This will give a function whose double roots parameterize the limit shape curves. This is a fairly standard approach in the asymptotics of random tilings, see for instance~\cite[Lecture 15]{Gor20} for a good exposition, and so we only give a brief outline of the main steps. Even though we can give the computations below in full detail, we cannot analyze the rest of the entries of the inverse Kasteleyn matrix with this method, as they are currently not in the best form for asymptotic analysis.

Since we have $(x_1,x_2)=(i_1,i_1+2i_2+1)$, we use the expression gathered from~\eqref{Knb1} and~\eqref{hnb1}. Note, we can truncate the $k$-sum to $\lfloor \frac{i_1}{2} \rfloor$ since there are no residues at $r=0$ for $k>\lfloor \frac{i_1}{2} \rfloor$.  We obtain
 \begin{equation}\label{Kinvbounint}
	\begin{split}
	K^{-1}_n((x_1,x_2),\mathbf{b}) &= (-1)^{i_2} \sum_{k=0}^{\lfloor \frac{i_1}{2} \rfloor} \frac{p(n,k,0)}{2 \pi \mathrm{i}} \int_{\Gamma_0} \frac{\text{d}r}{r} \frac{(1+r)^{n-k}}{r^{i_1-2k}} \sum_{\ell=0}^{i_2} \binom{i_2-1+\ell}{\ell} (-r)^\ell  \\
		&= (-1)^{i_2} \sum_{k=0}^{\lfloor \frac{i_1}{2} \rfloor}   \frac{p(n,k,0)}{2 \pi \mathrm{i}} \int_{\Gamma_0} \frac{\text{d}r}{r} \frac{ (1+r)^{n-k-i_2}}{r^{i_1-2k}},
	\end{split}
\end{equation}
where we have used 
\begin{equation}
	\sum_{\ell=0}^{i_2} \binom{i_2-1+\ell}{\ell} (-r)^\ell = (1+r)^{-i_2} -(-r)^{i_1+1}  \binom{i_1+i_2}{1+i_1}\pFq{2}{1}{1,1+i_1+i_2}{2+i_2}{-r},
\end{equation}
and the fact that the latter term will have no residue at $r=0$ in the middle equation in~\eqref{Kinvbounint}.  Computing the integral in the last equation in~\eqref{Kinvbounint}, using~\eqref{eq:pnkl}  and writing factorials as gamma functions gives
\begin{equation} \label{Kinvbounint2}
	\begin{split}
		&(-1)^{i_2}\sum_{k=0}^{\lfloor \frac{i_1}{2} \rfloor} \frac{(-1)^k}{k! (n-k)!} \frac{\Gamma(n+k+2) \Gamma(2n-k+2) \Gamma(2n-2k+1) }{\Gamma(3n-k+3)\Gamma(n-k+2)} \\
		&\times (3n-3k+2)\frac{\Gamma(n-k+i_2+1)}{\Gamma(i_1-2k+1)\Gamma(n+k-i_1-i_2+1) }.
	\end{split}
\end{equation}
Using
\begin{equation}
	\frac{(-1)^k}{k! (n-k)!} =\int_{\Gamma_k} \text{d}w \frac{(-1)^n}{\prod_{r=0}^n (w-r)}, \quad 0 \leq k \leq n,
\end{equation}
and the residue theorem, we have that~\eqref{Kinvbounint2} is equal to 
\begin{equation}
	\begin{split}\label{Kinvbounint3}
\frac{(-1)^{i_2}}{2 \pi \mathrm{i}}& \int_{\Gamma_{0,\dots,\lfloor i_1/2 \rfloor}} \text{d}w  \frac{ (-1)^n}{\prod_{r=0}^n (w-r)} \frac{\Gamma(n+w+2) \Gamma(2n-w+2) \Gamma(2n-2w+1)}{\Gamma(3n-w+3)\Gamma(n-w+2) } \\
		&\times  \frac{(3n-3w+2)\Gamma(n-w+i_2+1)}{\Gamma(i_1-2w+1)\Gamma(n+w-i_1-i_2+1) }.
	\end{split}\end{equation}

\begin{figure}[htbp!]
		\begin{center}
			\includegraphics[height=5cm]{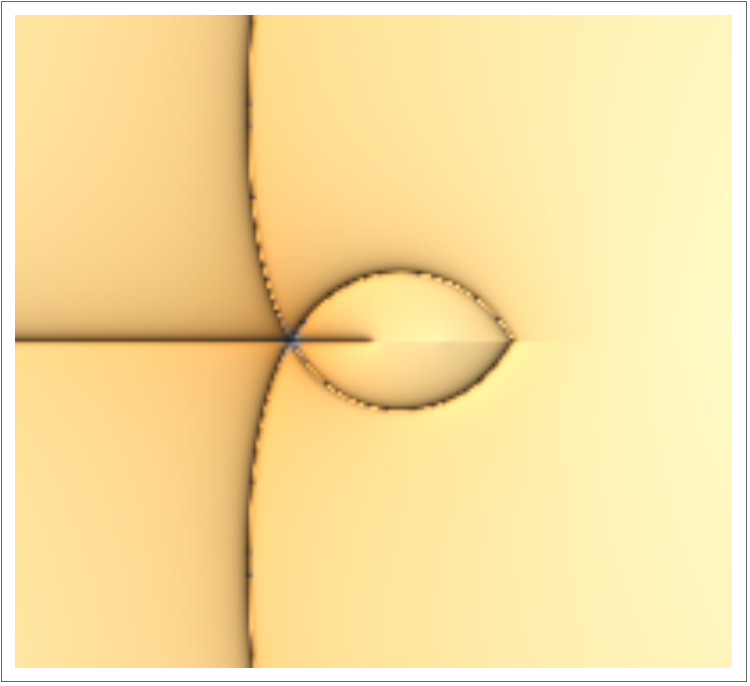}
			\caption{The contours of steepest descent and ascent for $s(w)$. The contour of steepest descent leaves the double critical point with angle $+\pi/3$ and passes through the point $(X+2)>0$ and then returns to the double critical point. The crossing points of the real axis can be determined explicitly.}
			\label{fig:contours}
		\end{center}
\end{figure}	
	
Using the rescaling $x_1=[(X+2)n]$ and $x_2=[(\sqrt{3}Y+2)n]$
given in the statement of \cref{conj:limitshape} gives that $i_1=[(X+2)n]$ and $i_2=\frac{1}{2} ([(\sqrt{3}Y+2)n]-[(X+2)n]-1)$. In~\eqref{Kinvbounint3}, we make the change of variables $w \mapsto wn$ and apply Stirling's formula, see~\cite[Proposition 7.3]{Pet14} for the exact form. Then we obtain
\begin{equation}
	\frac{(-1)^{i_2+n}}{2\pi \mathrm{i}} \int_{\tilde{\Gamma}} \text{d}w \; h(w)e^{ns(w)+O(1/n)}
\end{equation}
	where $\tilde{\Gamma}$ is a positively oriented contour that surrounds the points $0,\frac{1}{n},\dots, \lfloor \frac{i_1}{2} \rfloor \frac{1}{n}$, $h(w)$ is a rational function in $w$ and 
	\begin{equation}
		\begin{split}
			&s(w)= (1+w)\log (1+w) +(2-w) \log (2-w) +(2-2w)\log (2-2w) \\
			&+ \left(1-w-\frac{\sqrt{3}Y-X}{2} \right) \log  \left(1-w-\frac{\sqrt{3}Y-X}{2} \right) -w \log (-w)  \\
			&-(3-w)\log(3-w) -2(1-w) \log (1-w) -(X+2-2w) \log (X+2-2w) \\
			&-\left(1+w-(2+X)-\frac{\sqrt{3}Y-X}{2} \right) \log \left(1+w-(2+X)-\frac{\sqrt{3}Y-X}{2} \right).
	\end{split}
	\end{equation}
	The exact form of $h(w)$ is not important (in fact, it can be computed explicitly). What is important is that it does not influence the saddle point function $s(w)$, nor does it contain any additional poles when we deform the contours.  The roots of $s(w)$ can be determined by solving $s'(w)=0$ in $w$ which gives
	\begin{equation}
\frac{2 X-Y^2+4\pm \sqrt{Y^2 \left(X^2+Y^2-4\right)}}{4-Y^2}.
	\end{equation}

	This has double roots when $Y=0$, which corresponds to the top boundary of the rescaled TSSCPP, and when $X^2+Y^2=4$, which is precisely the conjectured limit shape curve. We focus on the latter and set $Y=-\sqrt{4-X^2}$.  Due to the rescaling of the TSSCPP, we have that $-2 < X  < -\sqrt{3}$.  The contour $\tilde{\Gamma}$ can be deformed to pass through the double root, following the contours of steepest descent; see \cref{fig:contours} for a description. No other poles are crossed when performing this deformation and so the main contribution comes locally around the double critical point.  This leads to Airy function type asymptotics, which can easily be computed; see~\cite{Joh17} for an example. 
We omit the details and the explicit computation since we have already evaluated the limit shape curves.  

\section*{Acknowledgements}
The first author (SC) would like to thank Dan Romik for sharing his slides from his talk.
The second author (AA) would like to acknowledge insightful discussions with C. Krattenthaler on \cref{thm:sumf}.
We also thank the anonymous reviewers for their comments.
This material is based upon work supported by the Swedish Research Council under grant no. 2016-06596 while the authors were in residence at Institut Mittag-Leffler in Djursholm, Sweden during the spring semester of 2020.
This research was supported in part by the International Centre for Theoretical Sciences (ICTS) during a visit for participating in the program - Universality in random structures: Interfaces, Matrices, Sandpiles (Code: ICTS/urs2019/01).
The authors acknowledge support from the Royal Society grant  IES\textbackslash R1\textbackslash 191139.
The first author (AA) was partially supported by the UGC Centre for Advanced Studies and by Department of Science and Technology grant
EMR/2016/006624. 
The second author (SC) was supported by EPSRC EP\textbackslash T004290\textbackslash 1. 

\bibliographystyle{alpha}
\bibliography{Biblio}

\end{document}